\numberwithin{equation}{section}
\numberwithin{figure}{section}
\theoremstyle{plain}
\newtheorem{thm}{Theorem}[section]
\newtheorem{lem}[thm]{Lemma}
\newtheorem{cor}[thm]{Corollary}  
\newtheorem{prop}[thm]{Proposition}
\newcounter{introthm}
\newtheorem{introtheorem}[introthm]{Theorem}
\newtheorem{introcoro}[introthm]{Corollary}
\theoremstyle{definition}
\newtheorem{defn}[thm]{Definition}
\theoremstyle{remark}
\newtheorem{rem}[thm]{Remark}
\newtheoremstyle{myrem}{3pt}{3pt}{\normalfont}{0pt}{\bfseries}{.}{0.5em}{}
\theoremstyle{myrem}
\newcommand{\mc}{\mathcal}
\newcommand{\w}{\omega}
\renewcommand{\:}{\colon}
\renewcommand{\d}{\partial}
\newcommand{\g}{R}
\newcommand{\V}{\hat V}
\newcommand{\E}{\hat E}
\newcommand{\B}{\hat B}
\newcommand{\q}{q}
\newcommand{\e}{e}
\newcommand{\aaa}{$1$-\resizebox{!}{2.0ex}{$\stackrel{2}{\textrm{to}}$}-$1$}
\newcommand{\aab}{$1$-\resizebox{!}{2.0ex}{$\stackrel{3}{\textrm{to}}$}-$1$}
\newcommand{\aac}{$1$-\resizebox{!}{2.0ex}{$\stackrel{n}{\textrm{to}}$}-$1$}
\newcommand{\SC}{S_{C}}
\newcommand{\SCP}{S_{C'}}
\newcommand{\SW}{S_{\partial W}}
\newcommand{\SWP}{S_{\partial W'}}
\newcommand{\SWH}{S_{\partial \hat W}}
\newcommand{\SWPI}{S_{\pi(h(D))}}
\newcommand{\SD}{S_D}
\newcommand{\s}{\sigma}
\renewcommand{\S}{\Sigma}
\newcommand{\thin}{C^{\ast}}
\newcommand{\tthin}{C^{\ast\ast}}
\newcommand{\Thin}{\textrm{lev}}
\renewcommand{\g}{g}
\renewcommand{\limsup}{\varlimsup}
\renewcommand{\liminf}{\varliminf}
\newcommand{\ssq}{\ensuremath{\subseteq}}
\newcommand{\eps}{\ensuremath{\varepsilon}}
\newcommand{\alphlist}{\begin{list}{(\alph{enumi})}{\usecounter{enumi}\setlength{\parsep}{2pt}
      \setlength{\itemsep}{1pt} \setlength{\topsep}{5pt}
      \setlength{\partopsep}{3pt}}}
\newcommand{\arablist}{\begin{list}{(\arabic{enumi})}{\usecounter{enumi}\setlength{\parsep}{2pt}
          \setlength{\itemsep}{1pt} \setlength{\topsep}{5pt}
          \setlength{\partopsep}{3pt}}}
\newcommand{\romanlist}{\begin{list}{(\roman{enumi})}{\usecounter{enumi}\setlength{\parsep}{2pt}
              \setlength{\itemsep}{1pt} \setlength{\topsep}{5pt}
              \setlength{\partopsep}{3pt}}}
\newcommand{\Romanlist}{\begin{list}{(\Roman{enumi})}{\usecounter{enumi}\setlength{\parsep}{2pt}
              \setlength{\itemsep}{1pt} \setlength{\topsep}{5pt}
              \setlength{\partopsep}{3pt}}}
\newcommand{\bulletlist}{\begin{list}{$\bullet$}{\setlength{\parsep}{2pt}
                \setlength{\itemsep}{1pt} \setlength{\topsep}{5pt}
                \setlength{\partopsep}{3pt}\setlength{\leftmargin}{15pt}}} 
\newcommand{\Alphlist}{\begin{list}{(\Alph{enumi})}{\usecounter{enumi}\setlength{\parsep}{2pt}
      \setlength{\itemsep}{1pt} \setlength{\topsep}{5pt}
      \setlength{\partopsep}{3pt}}}
 \newcommand{\listend}{\end{list}}
\newcommand{\T}{\ensuremath{\mathbb{T}}}
\newcommand{\N}{\ensuremath{\mathbb{N}}} 
\newcommand{\R}{\ensuremath{\mathbb{R}}}
\newcommand{\Z}{\ensuremath{\mathbb{Z}}}
\title[Birkhoff Spectra of almost one-to-one extensions]{Birkhoff Spectra of symbolic almost one-to-one extensions} 
\author{Gabriel Fuhrmann}
\address{Department of Mathematical Sciences, Durham University, UK}
\email{gabriel.fuhrmann@durham.ac.uk}
\newcounter{counter}
\begin{document}

\makeatletter
\typeout{Text width: \the\textwidth}
\typeout{Text height: \the\textheight}
\makeatother

\begin{abstract}
Given a continuous self-map $f$ on some compact metrisable space $X$,
it is natural to ask for the visiting frequencies of points $x\in X$
to sufficiently ``nice'' sets $C\ssq X$ under iteration of $f$.

For example, if $f$ is an irrational rotation on the circle,
it is well-known that the Birkhoff average $\lim_{n\to\infty}1/n\cdot \sum_{i=0}^{n-1}\mathbf 1_C(f^i(x))$ exists and equals $\textrm{Leb}_{\T^1}(C)$ for all $x$ whenever $C$ is measurable with boundary $\d C$ of zero Lebesgue measure.
If, however, $\d C$ is fat (of positive measure), the respective averages can generally only be evaluated almost everywhere or on residual sets.
In fact, there does not appear to be a single example of a fat Cantor set $C$ whose \emph{Birkhoff spectrum}---the full set of visiting frequencies---is known.

In this article, we develop an approach to analyse the Birkhoff spectra of a natural class of dynamically defined fat nowhere dense compact subsets of Cantor minimal systems.
We show that every Cantor minimal system admits such sets whose Birkhoff spectrum is a full non-degenerate interval---and also such sets for which the spectrum is not an interval. 
As an application, we obtain that every irrational rotation admits fat Cantor sets $C$ and $C'$ whose Birkhoff spectra are, respectively, an interval and not an interval.
\end{abstract}

\maketitle
\section{Introduction}\label{sec: introduction}
Given an irrational rotation $(\T^1,w)$ on the circle, Baire's Category Theorem implies that if $C\ssq \T^1$ is a Cantor set, then there are residually many $\theta\in \mathbb T^1$ that avoid $C$, in particular,
\[\lim_{n\to \infty} S^n_C(\theta)=\lim_{n\to \infty}\frac 1n \cdot\sum_{i=0}^{n-1} \mathbf 1_C(\theta+i\w)=0.\]
On the other hand, if $C$ is fat (that is, $\textrm{Leb}_{\mathbb T^1}(C)>0$),
Birkhoff's Ergodic Theorem gives
\[\lim_{n\to\infty} S^n_C(\theta)=\textrm{Leb}_{\mathbb T^1}(C)>0\] for $\textrm{Leb}_{\mathbb T^1}$-a.e.\ $\theta$.
Moreover, as a simple consequence of the unique ergodicity of $(\T^1,\w)$, we have $\limsup_{n\to\infty} S^n_C(\theta)\leq \textrm{Leb}_{\mathbb T^1}(C)$ for each $\theta \in \T^1$.
Besides these well-known and classical observations, however,
the range of possible visiting frequencies to $C$ remains poorly understood.

Some years ago, 
Kwietniak asked if, besides the above, there is anything else we can say about the Birkhoff spectrum, that is, the collection of all accumulation points
\[
S_C=\bigcup_{\theta\in \T^1}\bigcap_{N\in \N}\overline{\left\{S^n_C(\theta)\: n\geq N\right\}}
\]
when $C$ is a fat Cantor set \cite{KwietniakOverflow}.
Specifically, is it possible that $S_C=[0,\textrm{Leb}_{\T^1}(C)]$?
As we will show, the answer is \emph{yes}, see Corollary~\ref{cor: birkhoff spectra of irrational rotations introduction}.

Questions like the above naturally arise when dealing with irregular or, more generally, weak model sets, where fat Cantor sets often appear as (boundaries of) the respective windows.
In part due to their strong links to $\mc B$-free systems and Sarnak’s M\"obius disjointness programme,
there is a recent surge of interest in this area; see \cite{BaakeJagerLenz2016,JaegerLenzOertel2016PositiveEntropyModelSets,KasjanKellerLemanczyk2019,FuhrmannGlasnerJagerOertel2021,zbMATH07990784} and references therein.
While our work follows a somewhat independent direction in addressing the above question, we develop an approach that allows for a detailed analysis of the possible ergodic averages and thus contributes to the broader efforts in the field. 
Moreover, due to the flexibility of our method, we expect it to be applicable to a range of other problems.

\subsection{Main results}
Naively and obviously flawed, one might try proving $S_C=[0,\textrm{Leb}_{\T^1}(C)]$ by realising any frequency in $[0,\textrm{Leb}_{\T^1}(C)]$ through orbits that alternately shadow---for suitably chosen stretches of time---trajectories which never hit $C$ and trajectories which visit $C$ with a frequency $\textrm{Leb}_{\T^1}(C)$; after all, by minimality of irrational rotations, each point comes arbitrarily close to any other point.
As flawed as this strategy is, it actually works in a different setting and under certain assumptions---specifically, the setting addressed in our first main result.
\begin{introtheorem}\label{thm: A}
    Suppose $(\hat\S,\s)$ and $(\S,\s)$ are minimal subshifts with a topological factor map  $\pi\:(\hat\S,\s)\to (\S,\s)$ such that
    \begin{itemize}
        \item $\pi$ is almost $1$-to-$1$, that is, there is $x\in \S$ with $|\pi^{-1}(x)|=1$;
        \item Each fibre has at most $2$ elements, that is, $|\pi^{-1}(x)|\leq 2$ ($x\in \S$).
    \end{itemize}    
    Then $S_D=[0,\sup_\mu \mu(D)]$, where the supremum is over all invariant measures $\mu$ of $(\S,\s)$.
\end{introtheorem}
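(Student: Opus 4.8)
The plan is to prove the inclusions $S_D\subseteq[0,\sup_\mu\mu(D)]$ and $[0,\sup_\mu\mu(D)]\subseteq S_D$ separately; the first is soft and essentially all the work lies in the second. Since the invariant measures of $(\hat\S,\s)$ project onto those of $(\S,\s)$, I read $\sup_\mu\mu(D)$ as the supremum of $\hat\mu(D)$ over invariant measures $\hat\mu$ of $(\hat\S,\s)$ and work upstairs throughout. For $S_D\subseteq[0,\sup_\mu\mu(D)]$ the lower bound is immediate as $S^n_D\ge 0$. For the upper bound I would use that $D$ is closed, so that $\mathbf 1_D$ is upper semicontinuous and every weak-$\ast$ accumulation point of the empirical measures $\frac1n\sum_{i<n}\delta_{\s^i\hat x}$ is $\s$-invariant; the standard estimate $\limsup_n\frac1n\sum_{i<n}g(\s^i\hat x)\le\max_{\hat\mu}\int g\,d\hat\mu$ for upper semicontinuous $g$ then gives $\limsup_n S^n_D(\hat x)\le\sup_\mu\mu(D)$. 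The maximum is attained at an \emph{ergodic} measure $\hat\mu^\ast$, because $\hat\mu\mapsto\hat\mu(D)$ is affine and upper semicontinuous and the simplex of invariant measures is weak-$\ast$ compact.

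I would next dispatch the two endpoints, neither of which needs the fibre hypothesis. For $0$: since $D$ is closed and nowhere dense, $\hat\S\setminus D$ is open and dense, so $\bigcap_{n\ge 0}\s^{-n}(\hat\S\setminus D)$ is a dense $G_\delta$ by Baire's theorem, and any $\hat x$ in it satisfies $S^n_D(\hat x)=0$ for every $n$, giving $0\in S_D$. For $\sup_\mu\mu(D)$: apply Birkhoff's Ergodic Theorem to $\mathbf 1_D\in L^1(\hat\mu^\ast)$---which does not require $\mathbf 1_D$ to be continuous---to obtain $\hat x$ with $S^n_D(\hat x)\to\hat\mu^\ast(D)=\sup_\mu\mu(D)$.

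The heart of the proof is to realise every intermediate $\alpha\in(0,\sup_\mu\mu(D))$, and here I would implement the interpolation sketched in the introduction. The idea is to build a single orbit that, over successive and increasingly long stretches, alternately imitates a frequency-$0$ orbit (one avoiding $D$) and a frequency-$\sup_\mu\mu(D)$ orbit (a $\hat\mu^\ast$-generic one), with the lengths of the stretches tuned so that the running averages $S^n_D$ sweep slowly across the interval. Because consecutive averages differ by $O(1/n)$, the set of accumulation points of a single such orbit is automatically the closed interval $[\liminf_n S^n_D,\limsup_n S^n_D]$; arranging the limit inferior to be $0$ and the limit superior to be $\sup_\mu\mu(D)$ then yields $[0,\sup_\mu\mu(D)]\subseteq S_D$ from one point. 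To assemble the imitating stretches into one genuine orbit I would splice the two building blocks through the injective point furnished by almost injectivity: over a singleton fibre the coding is locally determined, so the blocks can be concatenated unambiguously, the connecting segments having bounded length and hence contributing nothing to the frequency in the limit.

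The step I expect to be the main obstacle---and the precise reason the naive scheme fails on the circle yet succeeds here---is controlling $\mathbf 1_D$ along the spliced orbit. Since $D$ has a fat boundary, $\mathbf 1_D$ is not locally determined: whether $\s^i\hat x\in D$ depends on the entire orbit, so the $D$-frequency of a finite stretch is not intrinsic to it and need not be preserved under splicing. This is exactly where the at-most-$2$-to-$1$ hypothesis is indispensable. On a fibre of size at most two the ambiguity in $D$-membership that survives the extension is a single binary alternative, and the two preimages of a base point, being proximal under an almost $1$-to-$1$ factor map, should have $D$-counts along their orbits that differ only sublinearly. I would use this to show that the discrepancy between the designed frequency---the convex combination dictated by the stretch proportions---and the actual value of $S^n_D$ along the spliced orbit is $o(1)$, so that the averages indeed accumulate on all of $[0,\sup_\mu\mu(D)]$. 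Quantifying this error uniformly over the increasingly long stretches and the branch visits they contain is the crux; the binary nature of the fibres is precisely what makes the attainable frequencies fill an \emph{interval} rather than leaving the gaps that larger fibres would produce.
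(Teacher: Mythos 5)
Your soft steps are all fine and match the paper: the upper bound via upper semicontinuity of $\mathbf 1_D$ and the attainment of $\sup_\mu\mu(D)$ are exactly Proposition~\ref{prop: limsup is assumed as limit}, the point $0\in\SD$ comes from nowhere density of $D$, and the observation that $|S_D^{n+1}-S_D^n|=O(1/n)$ upgrades a $\limsup/\liminf$ sandwich around $\nu$ to $\nu\in\SD$ is the opening move of the proof of Theorem~\ref{thm: main}. The gap is at the step you yourself flag as the crux, and the resolution you propose for it is false. You need that the $D$-count of a long stretch is intrinsic to that stretch up to $o(\mathrm{length})$, i.e.\ that two points of $\S$ agreeing on a long window have $D$-counts on that window differing sublinearly. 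They do not: take $y$ whose orbit avoids $D$ entirely (such points are residual, since $D$ is closed and nowhere dense) and $z$ with $\lim_n S_D^n(z)=\sup_\mu\mu(D)>0$ (which exists by Proposition~\ref{prop: limsup is assumed as limit}). By minimality there are $i$ and arbitrarily large $M$ such that $\s^i y$ and $z$ agree on the window $[-M,M]$; yet no point of the orbit of $\s^iy$ lies in $D$, while $\#\{j\in[0,M]\: \s^jz\in D\}\sim \sup_\mu\mu(D)\cdot M$. So the discrepancy is linear, and the ``designed'' frequency of a spliced orbit says nothing about its actual $D$-frequency: a stretch copied from the $D$-avoiding orbit may, inside the spliced point, visit $D$ with positive frequency, and vice versa. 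Proximality of the two points of a fibre (which indeed holds for almost $1$-to-$1$ extensions of minimal systems) cannot repair this: it only gives $\liminf_n d(\s^n\hat x,\s^n\hat y)=0$ and carries no density information; besides, two points of one fibre project to the same base orbit, so the sublinear-discrepancy statement you attribute to proximality is either vacuous or, in the form you actually need, false. (A secondary issue: in a general minimal subshift the connecting words needed to splice prescribed blocks are only bounded by syndeticity constants of the inserted blocks, creating a circular dependency in your length bookkeeping; that is an annoyance, whereas the locality failure above is fatal.)

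This failure of locality is precisely what the paper's machinery is built to circumvent, and it is where the at-most-$2$ hypothesis genuinely enters. The paper passes to Bratteli--Vershik representations (Theorem~\ref{thm Sugisaki}) and to the extended Bratteli diagram, replacing $\mathbf 1_D$ by the indicator of $\mc V_1^+$, a level-$1$ cylinder function which \emph{is} local. One always has $S^n(\gamma)\leq S_D^n(\pi(\gamma))$ (Proposition~\ref{prop: Sn is lower bound for SDn}), with equality when $\gamma$ is the full preimage of its projection (Proposition~\ref{prop: full preimage}); the fibre bound of $2$ guarantees, via Remark~\ref{rem: full preimages in finite-to-1 extensions}, that any constructed path traversing a $2$-element vertex is automatically such a full preimage, so the local count computes the true $D$-frequency. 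Your alternation idea then survives, but in combinatorial form (Lemma~\ref{lem: finitary argument for interval}): ``low'' stretches are maximal finite paths, which traverse only singleton vertices; ``high'' stretches are finite paths realising frequency at least $\nu$, which exist by Proposition~\ref{prop: full preimage} applied to a point realising $\sup_\mu\mu(D)$ as a limit; and Lemma~\ref{lem: consequence of exceeding levels}, after telescoping so that each chosen level exceeds the previous one on a small scale, shows that the frequency contributed by a stretch is independent, up to $2\delta$, of everything preceding it. That lemma is the quantitative substitute for the locality your splicing argument assumes; without some such substitute, your final step cannot be carried out.
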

Here, $D$ is the collection of points $x=(x_n)_{n\in\Z}\in \S$ such that there are $(y_n)_{n\in\Z},(z_n)_{n\in\Z}\in \pi^{-1}(x)$ with $y_0\neq z_0$; $S_D$ comprises all possible frequencies of visits to $D$ analogously to $S_C$.\footnote{Formally, 
$S_D=\bigcup_{x\in \S}\bigcap_{N\in \N}\overline{\left\{S^n_D(x)\: n\geq N\right\}}$,
where $S^n_D(x)= 1/n \cdot\sum_{i=0}^{n-1} \mathbf 1_D(\s^i x)$ and $D$ is as above.
}
We emphasise that, in the above statement, the symbolic setting is not only more tractable from a combinatorial perspective than analogous problems for irrational rotations; owing to an array of available codings across different classes of systems, 
 it also lends itself to a broader range of potential applications.
We will take advantage of this fact in Section~\ref{sec: last section}.


It is also important to note that we show---through a simple, explicit construction---that any minimal subshift $(\Sigma,\sigma)$ admits an extension $(\hat \S,\s)$
satisfying the assumptions of Theorem~\ref{thm: A} with $\mu(D)>0$ for every invariant measure $\mu$, see Theorem~\ref{thm: 2 to 1 extensions exist}.
En passant, this provides an alternative to the constructions of irregular zero entropy model sets in \cite{BaakeJagerLenz2016,FuhrmannGlasnerJagerOertel2021}, see also the discussion in Remark~\ref{rem: entropy conjecture model sets}.

Our second main result establishes that in Theorem~\ref{thm: A}, the assumption of an upper bound of $2$ on the fibre cardinality is optimal in some sense.
\begin{introtheorem}\label{thm: B}
   Each minimal subshift $(\S,\s)$    
   admits a minimal almost $1$-to-$1$ extension with at most $3$ elements in each fibre, and such that $\{0\}\subsetneq S_D\neq [0,\sup_\mu \mu(D)]$.
\end{introtheorem}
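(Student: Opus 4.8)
The plan is to produce an \emph{explicit} almost $1$-to-$1$ extension, obtained by refining the two-to-one construction behind Theorem~\ref{thm: 2 to 1 extensions exist} with a single additional branch, whose splitting set $D$ has a prescribed hierarchical structure. Two features of $S_D$ come almost for free. Since $\s$ is a homeomorphism commuting with $\pi$, the fibre cardinality $|\pi^{-1}(\s^n x)|=|\pi^{-1}(x)|$ is constant along each orbit, so the residual set $\{x\:|\pi^{-1}(x)|=1\}$ is fully invariant; any point $x_0$ in it satisfies $\s^n x_0\notin D$ for every $n$, whence $S^n_D(x_0)\equiv 0$ and $0\in S_D$. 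On the other hand, if the construction is arranged—as in Theorem~\ref{thm: 2 to 1 extensions exist}—so that some ergodic measure $\mu$ of $(\S,\s)$ has $\mu(D)>0$, then Birkhoff's pointwise theorem applied to the \emph{measurable} indicator $\mathbf 1_D$ gives $S^n_D(x)\to\mu(D)$ for $\mu$-a.e.\ $x$, so that $\mu(D)\in S_D$ and hence $\{0\}\subsetneq S_D$.

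The substantive point is that $D$, being nowhere dense yet of positive measure, has \emph{fat boundary}: its indicator is far from continuous, so weak-$*$ convergence of the empirical measures $\tfrac1n\sum_{i<n}\delta_{\s^i x}$ to an invariant $\nu$ does \emph{not} force $S^n_D(x)\to\nu(D)$. It is exactly this slack that the proof of Theorem~\ref{thm: A} exploits to realise \emph{every} frequency in $[0,\sup_\mu\mu(D)]$: with fibres of size at most $2$ one can interpolate, along carefully chosen time windows, between orbit segments that stay off $D$ and segments that meet $D$ densely, \emph{while keeping exact control of which iterates land in $D$}. The goal of the construction is to destroy this control across a band of frequencies. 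I would realise $D$ as a nested intersection $D=\bigcap_k D_k$ of clopen (finite-window) sets arising from a refining sequence of Kakutani–Rokhlin towers over $(\S,\s)$, and use the third branch to pin, at every scale $k$, the number of genuine $D$-points met while crossing a level-$k$ column. The marking is tuned so that these pinned counts force the attainable crossing densities—and hence, after accounting for the discrepancy between $S^n_D$ and its clopen approximants $S^n_{D_k}$—to omit an interval $(a,b)$. Because membership in $D$, unlike membership in the $D_k$, is not decided by any finite window, an orbit cannot ``ride the boundary'' to fabricate a frequency in the forbidden band, so every accumulation point of $S^n_D(x)$ is pushed out of $(a,b)$, giving $S_D\neq[0,\sup_\mu\mu(D)]$.

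Concretely I would carry out the following steps: (i) fix the refining clopen tower sequence over $(\S,\s)$ and the marking rule defining the $D$-levels at each scale, inserting the third sheet precisely so that the fibre cardinality never exceeds $3$ and the resulting inverse limit $(\hat\S,\s)$ stays minimal and almost $1$-to-$1$; (ii) record, as in Theorem~\ref{thm: 2 to 1 extensions exist}, that $\mu(D)>0$ for every invariant $\mu$, securing $\{0\}\subsetneq S_D$; (iii) carry out the frequency accounting, decomposing an arbitrary window $[0,n)$ into full level-$k$ columns plus two partial columns, expressing $S^n_D(x)$ through the pinned per-column counts with an $o(n)$ boundary error, and deducing that its accumulation points avoid $(a,b)$, uniformly in $x$.

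I expect step (iii)—and in particular the \emph{uniformity over all $x$}—to be the main obstacle. The difficulty is to defeat the interpolation/shadowing strategy that succeeds for Theorem~\ref{thm: A}: one must show that the third branch genuinely rigidifies the per-column $D$-counts, so that no arrangement of full and partial columns, at any scale, lets $S^n_D(x)$ accumulate inside $(a,b)$. This rigidity has to be wrung from the single extra branch while the fibre bound stays at $3$—the content of the theorem being precisely that two sheets never suffice (Theorem~\ref{thm: A}) whereas three can. A secondary but genuinely technical issue is to handle the fat boundary quantitatively: since $\mathbf 1_D$ is not a continuity set, the passage from the clopen approximants $D_k$ to $D$ must go through explicit control of the tower-boundary discrepancies rather than any soft weak-$*$ argument.
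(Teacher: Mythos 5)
Your preliminary observations are fine: the set of regular fibres is invariant and disjoint from $D$, so $0\in S_D$, and Birkhoff's theorem for an ergodic $\mu$ with $\mu(D)>0$ gives $\sup_\mu\mu(D)\in S_D$ (the paper obtains the same two facts from the nowhere-density of $D$ together with Proposition~\ref{prop: limsup is assumed as limit}). But the heart of the theorem---producing a three-sheeted extension whose spectrum genuinely omits an interval---is exactly the part you defer, and your sketch does not contain the idea that makes it work. You propose to ``pin, at every scale, the number of genuine $D$-points met while crossing a level-$k$ column'' so that the attainable densities omit some band $(a,b)$, and you yourself flag the uniformity over all $x$ as the main obstacle. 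That obstacle is the theorem: with a single extra sheet you cannot prescribe per-column $D$-counts at will, and nothing in your outline explains why an orbit could not mix columns of different types, or sit in partial columns, to manufacture an intermediate frequency---which is precisely the interpolation that succeeds in the two-sheeted setting of Theorem~\ref{thm: A}.

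What the paper actually does (Section~\ref{sec: non-maximal entire section}) is arrange a \emph{dichotomy} rather than pin counts, and the gap it produces sits at $0$, not in the interior of the spectrum. After telescoping, the edges into each vertex are coloured: an initial block of \emph{thick} edges, a subsequent block of \emph{thin} edges, plus the two extremal ones. The diagram $\B$ carries three copies of each vertex (plus a fourth copy of $v_0$ absorbing the minimal edges); along thick edges the three sheets run in parallel, while along thin edges two of the three sheets are wired into the \emph{same} copy and hence merge. Consequently $D$ is exactly the set of paths with no extremal edge and at most one thin edge above level $1$. The quantitative input is Lemma~\ref{lem: measure of collapsing or extremal sets}: thick edges occupy all but a proportion $3^{-n}$ of each level-$n$ column, and any orbit segment long enough to cross a thin interval at level $n$ spends less than half its time in the exceptional set $\tthin_{n-1}$. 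Combining these, any $\alpha$ whose orbit meets $D$ at all satisfies $S^M_D(\alpha)\geq 1-1/2-3^{-N}\geq 1/3$ for all large $M$, whence $S_D\cap(0,1/3)=\emptyset$. Your plan lives in the same Bratteli--Vershik/Kakutani--Rokhlin framework, but without this merging mechanism and the two colouring estimates there is no proof; and the discrepancy between $D$ and its clopen approximants that you correctly worry about is precisely what the $\tthin$ bookkeeping in Theorem~\ref{thm: non-maximal spectra exist} is there to control.
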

Interestingly, the examples obtained in our proof of the above theorem exhibit a spectral gap at $0$ but not at $\sup_\mu \mu(D)$.
Whether it is possible to achieve $S_D= \{0\} \cup \{\mu(D)\: \mu \text{ invariant}\}$ (with $\sup_\mu \mu(D)>0$) remains open.


Finally, we apply our main results to codings of minimal rotations on compact monothetic groups.
In the special case of rotations on the circle, we obtain 
\begin{introcoro}\label{cor: birkhoff spectra of irrational rotations introduction}
Given an irrational rotation $(\T^1,\w)$, there exist fat Cantor sets $C,C'\ssq\T^1$ such that $S_C=[0,\textrm{Leb}_{\T^1}(C)]$, and such that $\SCP$ has a gap at $0$; in particular, $\{0,\textrm{Leb}_{\T^1}(C')\}\ssq \SCP \subsetneq[0,\textrm{Leb}_{\T^1}(C')]$.
\end{introcoro}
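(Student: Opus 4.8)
The plan is to deduce the corollary from Theorems~\ref{thm: A} and \ref{thm: B} by coding the rotation symbolically, applying the symbolic results to the resulting branch set, and transporting that set back to the circle. First I would fix an almost $1$-to-$1$ symbolic coding $\rho\colon(\S,\s)\to(\T^1,\w)$ of the irrational rotation, obtained from a partition of $\T^1$ into intervals whose endpoints lie on a single $\w$-orbit (a Sturmian-type coding). Then $\rho$ is a minimal almost $1$-to-$1$ factor map whose non-injectivity locus $E\subseteq\T^1$ is a countable union of $\w$-orbits; in particular $E$ is $\w$-invariant with $\textrm{Leb}_{\T^1}(E)=0$, so $(\S,\s)$ inherits unique ergodicity from $(\T^1,\w)$, and its unique invariant measure $\mu$ satisfies $\rho_*\mu=\textrm{Leb}_{\T^1}$.

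Next I would apply the construction behind Theorem~\ref{thm: 2 to 1 extensions exist} to $(\S,\s)$ to obtain an at-most-$2$-to-$1$ minimal almost $1$-to-$1$ extension whose branch set $D$ has $\mu(D)>0$; by Theorem~\ref{thm: A} and unique ergodicity of $(\S,\s)$ this gives $S_D=[0,\sup_\mu\mu(D)]=[0,\mu(D)]$. Running the construction of Theorem~\ref{thm: B} instead yields a branch set $D'$ with $\{0,\mu(D')\}\subseteq S_{D'}\subsetneq[0,\mu(D')]$ and a gap at $0$. I then set $C\colon=\rho(D)$ and $C'\colon=\rho(D')$, and it remains to check (i) that $C,C'$ are fat Cantor sets and (ii) that the coding preserves the spectrum, i.e.\ $S_C=S_D$ and $S_{C'}=S_{D'}$.

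For (i), $D$ is a nowhere dense, perfect, compact subset of $\S$, and since $\rho$ is continuous, finite-to-one, and injective off the meagre invariant null set $\rho^{-1}(E)$, its image $\rho(D)$ is again compact, perfect and nowhere dense, hence Cantor; fatness follows from $\textrm{Leb}_{\T^1}(C)=\mu(\rho^{-1}(\rho(D)))=\mu(D)>0$, using that $\rho^{-1}(\rho(D))$ and $D$ differ only within the null set $\rho^{-1}(E)$. For (ii), the key identity is $\rho^{-1}(C)=D$ off $\rho^{-1}(E)$: any $\theta\in\T^1\setminus E$ has a singleton $\rho$-fibre, and by invariance of $E$ its entire forward orbit avoids $E$, whence $\mathbf 1_C(\theta+i\w)=\mathbf 1_D(\s^i\rho^{-1}(\theta))$ for all $i$ and thus $S^n_C(\theta)=S^n_D(\rho^{-1}(\theta))$. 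Since $\rho$ restricts to an orbit-preserving bijection $\S\setminus\rho^{-1}(E)\to\T^1\setminus E$, the contributions of all non-exceptional points to the two spectra coincide.

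The main obstacle is the countable family of exceptional orbits inside $E$: along these, the symbolic branch set $D$ and the coding branch locus $\rho^{-1}(E)$ interact, and $\mathbf 1_C(\theta+i\w)$ may a priori differ from $\mathbf 1_D(\s^i x)$ at every step, so that these orbits could contribute spurious accumulation points to one spectrum but not the other. I would resolve this by choosing the coding partition so that its endpoints, and hence $\rho^{-1}(E)$, avoid the boundary of $D$; then $D$ is locally constant across each double $\rho$-fibre, the identity $\rho^{-1}(C)=D$ holds \emph{exactly}, and $S^n_C(\theta)=S^n_D(x)$ for every $\theta=\rho(x)$, exceptional or not. With this compatibility secured, $S_C=S_D=[0,\textrm{Leb}_{\T^1}(C)]$ while $S_{C'}=S_{D'}$ retains its gap at $0$ with $\{0,\textrm{Leb}_{\T^1}(C')\}\subseteq S_{C'}\subsetneq[0,\textrm{Leb}_{\T^1}(C')]$, which is exactly the assertion.
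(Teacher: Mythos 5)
Your overall route is the same as the paper's: code $(\T^1,\w)$ by a Sturmian-type almost $1$-to-$1$ symbolic extension $\rho\:(\S,\s)\to(\T^1,\w)$, build the at-most-$2$-to-$1$ (resp.\ at-most-$3$-to-$1$) extension on top via Theorems~\ref{thm: 2 to 1 extensions exist} and \ref{thm: non-maximal spectra exist}, set $C=\rho(D)$, and check that $C$ is a fat Cantor set and that the spectrum survives the projection. You also correctly isolate the one genuine difficulty, namely the doubly-coded orbits in $E$. The problem is your proposed fix. You want to ``choose the coding partition so that its endpoints, and hence $\rho^{-1}(E)$, avoid the boundary of $D$'', but this is circular: $D$ is the branch locus of an extension built \emph{on top of} $\S$, and $\S$ is itself determined by the coding partition, so the partition cannot be adjusted after the fact to dodge $D$. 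Moreover, since $D$ is closed and nowhere dense, its boundary is all of $D$, so your condition amounts to $\rho^{-1}(E)\cap D=\emptyset$ for the entire invariant set $\rho^{-1}(E)$, i.e.\ \emph{both} codings of $0$ must have orbits that never enter $D$. Nothing in the construction guarantees this for both of them, and since $\mu(D)>0$ while the orbit of each coding is dense, this is a genuinely restrictive demand for which you supply no mechanism.

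What actually suffices---and what the paper arranges---is weaker: only \emph{one} element $x$ of each exceptional fibre $\{x,y\}$ needs to have an orbit disjoint from the (image of the) branch set. This is achieved by anchoring the Bratteli--Vershik representation so that $h^{-1}(\pi^{-1}(0))$ contains the minimal path of $B$; that path has a regular $q$-fibre, and regularity of $q$-fibres is orbit-invariant, so its whole orbit misses $D$. Then, since $\theta+i\w\in C$ exactly when some $\rho$-preimage of $\theta+i\w$ lies in the branch set, one gets $\mathbf 1_{C}(\theta+i\w)=\max\big(\mathbf 1_{h(D)}(\s^i x),\mathbf 1_{h(D)}(\s^i y)\big)=\mathbf 1_{h(D)}(\s^i y)$, so the exceptional orbit contributes exactly $S_D(h^{-1}y)\ssq S_D$---no spurious accumulation points, even though the pointwise identity you aim for fails for the distinguished preimage $x$. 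With this replacement your argument goes through. Two smaller points: your claim that $\rho(D)$ is perfect relies on $D$ itself having no isolated points, which is a property of the specific constructions in Theorems~\ref{thm: 2 to 1 extensions exist} and \ref{thm: non-maximal spectra exist} that must be (and in the paper is) verified rather than assumed; and the passage from the Bratteli--Vershik set $D$ to a subset of the subshift $\S$ requires fixing a $0$-expansive representation adapted to the coding, since $D$ depends on the choice of level-$0$ edges.
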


\subsection{Outline}
This article is organised as follows.
Terminology and background of those concepts we use all through the article are discussed in the next section.
Key to our analysis is the well-established machinery around Bratteli-Vershik representations
of Cantor minimal systems.
An important feature of these representations is that they provide us with a very explicit and straightforward characterisation of almost $1$-to-$1$ extensions---they are obtained through what we call \emph{copy-pasting}.
The basics and some basic consequences of this characterisation are discussed in Section~\ref{sec: almost 1 to 1 extensions...}.
In Section~\ref{sec: proof of the main theorem}, we prove a slightly more general version of Theorem~\ref{thm: A}, see Theorem~\ref{thm: main}.
A similarly more general version of Theorem~\ref{thm: B} is proven in  Section~\ref{sec: non-maximal entire section}, see Theorem~\ref{thm: non-maximal spectra exist}.
In the last part, Section~\ref{sec: last section}, we translate our main results to statements on visiting frequencies to certain kinds of nowhere dense sets on compact monothetic groups, including fat Cantor sets on $\T^1$.
This translation utilises almost automorphic subshifts; all the additional background we need in that context is discussed at the beginning of Section~\ref{sec: last section}.

\section{Preliminaries}\label{sec: preliminaries}
This section introduces the notation used throughout the paper and briefly reviews key concepts, in particular Bratteli-Vershik systems. 
While we do not aim for a comprehensive exposition, the essential concepts are discussed sufficiently for the purposes of this work. 
For further background, we refer the reader to the literature, see e.g.\ \cite{MarkleyPaul1979,Walters1982,auslander1988,HermanPutnamSkau1992}.
\subsection{Basic notions from topological dynamics}
A \emph{(topological) dynamical system} is
a pair $(X,f)$ where $X$ is a compact metrisable space and $f\:X\to X$ is a homeomorphism on $X$.
Given two dynamical systems $(X,f)$ and $(Y,g)$, a continuous onto map $\pi\: X\to Y$ is a \emph{factor map} if $\pi\circ f=g\circ \pi$---we may write $\pi\:(X,f)\to(Y,g)$.
In this case, we call $(X,f)$ an \emph{extension} of $(Y,g)$ and the latter a \emph{factor} of the former.
If $\pi$ is a homeomorphism, we call it an \emph{isomorphism} and say that $(X,f)$ and $(Y,g)$ are \emph{isomorphic}.

Given $\pi\:(X,f)\to(Y,g)$ and $y\in Y$, we call $\pi^{-1}y$ the \emph{fibre} (or \emph{$\pi$-fibre}) \emph{over} $y$---note that here as well as at various other places in this article, we avoid explicit bracketing.
We say a fibre is \emph{regular} if it is a singleton and otherwise, we say it is \emph{irregular}.
Identifying regular fibres with their unique element,
we say that $\pi$ is an \emph{almost $1$-to-$1$} factor map if the set
of regular fibres is dense in $X$.
If $(X,f)$ is minimal, this is equivalent to having
at least one regular fibre.
Here, $(X,f)$ is \emph{minimal} if for each $x\in X$ its \emph{orbit} $\mc O(x)=\{f^\ell(x)\:\ell \in \Z\}$ is dense in $X$.
A \emph{Cantor} minimal system is a minimal system $(X,f)$ where $X$ is a Cantor set.

Given a topological dynamical system $(X,f)$ and a Borel probability measure $\mu$ on $X$, we call
$\mu$ an \emph{invariant measure} if $\mu(A)=\mu(fA)$ for each Borel set $A$.

An almost $1$-to-$1$ factor map $\pi\:(X,f)\to(Y,g)$ is  \emph{regular} if
for every invariant measure $\mu$ of $(Y,g)$,
the fibre over $\mu$-almost every point is regular.
At the other extreme, if 
the projection of regular fibres is $\mu$-null for each invariant measure $\mu$, we call $\pi$ \emph{irregular}.
In either case, if the map $\pi$ is clear from the context or irrelevant, we just say that $(X,f)$ is a regular (an irregular) almost $1$-to-$1$ extension of $(Y,g)$.
Note that, regardless of (ir-)regularity, whenever $(X,f)$ is an almost $1$-to-$1$ extension of $(Y,g)$, the regular fibres are residual in $X$ and their projection (under the corresponding factor map) is residual in $Y$.

We are particularly interested in irregular almost $1$-to-$1$ extensions where there is a uniform upper bound
$n\in \N$ on the cardinality of each fibre.
For brevity, we may refer to such extensions (factor maps) as
irregular almost \emph{\aac}\ extensions (factor maps).

\subsection{Background on Bratteli diagrams}
 A \emph{Bratteli diagram} $B=(V,E)$ is an infinite graph with \emph{vertices} $V$ and \emph{edges} $E$ such that
 \begin{enumerate}
  \item $V=\bigsqcup_{n\in \Z_{\geq 0}} V_n$ with $V_0=\{v_0\}$ and $1\leq | V_n|<\infty$ ($n\geq 1$);
  \item $E_n=\bigsqcup_{n\in \Z_{\geq 0}} E_n$ with $1\leq | E_n|< \infty$ ($n\in \Z_{\geq 0}$);
  \item There are maps $r,s\: E \to V$ with $r(E_n)=V_{n+1}$ and $s(E_n)=V_n$ for all $n\geq 0$.
 \end{enumerate}
 Given $e\in E$, we call $r(e)$ the \emph{range} of $e$ and $s(e)$ its \emph{source}.
 A (possibly finite) sequence $(e_0,e_1,\ldots)$ in $E$ is a \emph{path} if it satisfies $r(e_{i-1})=s(e_i)$ for each $i\geq 1$.
 If $\gamma=(e_0,e_1,\ldots, e_n,\ldots)$ is a path with $r(e_n)=v\in V$, we say
 $\gamma$ \emph{traverses} $v$.
 We extend the maps $r$ and $s$ to paths by setting $s(e_0,e_1,\ldots)=s(e_0)$ (the \emph{source} of $(e_0,e_1,\ldots)$) and
 $r(e_0,e_1,\ldots,e_n)=r(e_n)$ (the \emph{range} of $(e_0,e_1,\ldots,e_n)$).
 
 Given $0\leq n<N$, we write $E_{n,N}$
 for the collection of all finite paths $\gamma$ with source in $V_n$ and
 range in $V_N$; for $v\in V_n$ and $v'\in V_N$, we write
 $E(v,v')$ for the collection of all paths $\gamma$ with $s(\gamma)=v$ and $r(\gamma)=v'$.
 The collection of all infinite paths with source $v_0\in V_0$ is denoted by $X_B$.
 Given a path $(e_0,e_1,\ldots,e_n,\ldots )$, we call $(e_0,e_1,\ldots,e_n)$ its
 \emph{$n$-head} ($n\geq 0$).
 For a finite path $\gamma=(e_0,e_1,\ldots,e_n)$ with $s(\gamma)=v_0$, we define
 $[\gamma]=[e_0,e_1,\ldots,\e_n]$ to be the collection all paths in $X_B$ whose
 $n$-head coincides with $\gamma$ and call $[e_0,e_1,\ldots,\e_n]$ an \emph{$n$-cylinder}, where $n\geq 0$.
 We equip $X_B$ with the topology generated by the collection of all $n$-cylinders.
 Unless stated otherwise, we only consider such diagrams where $X_B$ is a Cantor set with this topology.

 Given an infinite sequence $n_0=0<n_1<n_2<\ldots$ in $\Z_{\geq 0}$,
 we may \emph{telescope $B$ (along $(n_k)_{k\geq 0}$) to a diagram $B'=(V',E')$}
 where $V_k'=V_{n_k}$ and $E'_k=E_{n_k,n_{k+1}}$ for each $k\geq 0$ and $r$ and $s$ (the range and source maps of $B'$) are defined in the obvious way.
 If $n_{i+1}=n_i+1$ for all $i\geq 1$, we may simply say that $B'$ is obtained by telescoping $B$ \emph{between level $0$ and level $n_1$}.
 In this situation, if, conversely, we want to understand $B$ as being obtained from $B'$, we may also say that we get $B$ from $B'$ by \emph{introducing} new levels between level $0$ and level $1$.
 Unless mentioned otherwise, we only consider \emph{simple} Bratteli diagrams $B$ which, by definition,
 can be telescoped to some diagram $B'$ where for all $n\in \Z_{\geq 0}$ and each $v\in V_n'$ and $w\in V_{n+1}'$ there is $e\in E_n'$ with $s(e)=v$ and $r(e)=w$.

 Given a Bratteli diagram $(V,E)$ and a partial order $\leq $ on $E$, we call $B=(V,E,\leq)$ an \emph{ordered Bratteli diagram} if $e$ and $e'$ are comparable (that is, $e\leq e'$ or $e'\leq e$) if and only if $r(e)=r(e')$. 
 See Figure~\ref{fig: general sturmian bv diagram} for an example of an ordered Bratteli diagram.
 The order $\leq$ extends in the obvious way to finite paths, where $\gamma$ and $\gamma'$ are comparable if and only if they \emph{start} on the same level (that is, $s(\gamma),s(\gamma')\in V_k$) and \emph{end} in the same vertex (that is, $r(\gamma)=r(\gamma')$).
 We further extend $\leq$ to $X_B$ where $\gamma=(e_0,e_1,\ldots)\leq \gamma'=(e_0',e_1',\ldots)$  if and only if there is $n\in \Z_{\geq 0}$ such that $e_N=e'_N$ for all $N> n$ and the respective $n$-heads satisfy $(e_0,\ldots, e_n)\leq (e'_0,\ldots, e'_n)$.
 A diagram obtained by telescoping an ordered Bratteli diagram $B$ inherits the order from $B$ in the obvious way.
 \begin{figure}[ht]
\begin{center}
\includegraphics[scale=0.97, alt={A graph showing vertices organised by levels $V_0$ through $V_3$. 
Level $V_0$ contains a single vertex. 
Each subsequent level $V_n$ for $n \geq 1$ contains two vertices, labelled $0$ and $1$. 
At level $V_1$, each vertex is connected to the vertex in $V_0$ by exactly one edge.
At level $V_2$, vertex $1$ in $V_1$ connects to vertices in $V_2$ only through a minimal edge, while all other edges (with $n_1+1$ edges for vertex $2$ and $n_1$ edges for vertex $1$) originate from vertex $0$ in $V_1$. 
The structure at level $V_3$ is similar.}]{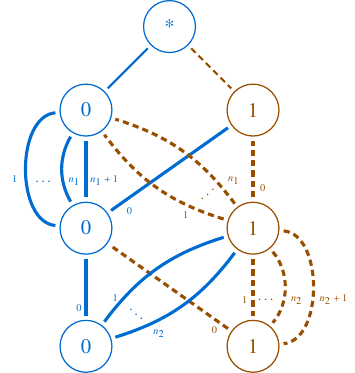}
 \caption{First levels of an ordered Bratteli diagram representing a Sturmian subshift, see \cite{MorseHedlund1940,DartnellDurandMaass2000}.
 The labels of the edges indicate the order.}
 \label{fig: general sturmian bv diagram}
\end{center}
\end{figure}

 We call a path \emph{maximal} (\emph{minimal}) if all its edges are maximal (minimal).
If a path $\gamma$ which starts in $v_0$ is not maximal, it has a unique successor in $\leq$ which we denote by $\phi_B(\gamma)$ (or simply $\phi(\gamma)$ if $B$ is clear from the context).
We always assume that $B$ is \emph{properly ordered}, that is, $X_B$ has a unique maximal path $\gamma^+$ and a unique minimal path $\gamma^-$.
Setting $\phi_B(\gamma^+)=\gamma^-$, $\phi_B$---when seen as a self-map on $X_B$---becomes a homeomorphism on $X_B$, which is referred to as the \emph{Vershik map} of $B$.
Note that as $B$ is properly ordered, we can enforce by telescoping that the minimal (maximal) edges on each level start in a unique vertex of the respective previous level.

It is well-known and not hard to see that the \emph{Bratteli-Vershik system}
$(X_B,\phi_B)$ is a Cantor minimal system.
The converse---that is, the fact that every Cantor minimal system has a representation as a Bratteli-Vershik system---is also well-known but less straightforward.
Specifically, given a Cantor minimal system $(X,f)$ and some point $x\in X$, there is a simple properly ordered diagram $B$ and an isomorphism $h\:(X_B,\phi_B)\to (X,f)$ which sends the unique minimal path to $x$, see e.g.\ \cite{HermanPutnamSkau1992} for the details.
We call $B$ as well as the pair $(B,h)$ or the associated system $(X_B,\phi_B)$ a \emph{Bratteli-Vershik representation} of $(X,f)$.
Note that if $B$ is a Bratteli-Vershik representation of $(X,f)$, then so is any diagram obtained through telescoping $B$.

\subsection{Invariant measures of Bratteli-Vershik systems}
We only need a very limited understanding of invariant measures of Bratteli-Vershik systems, see e.g. \cite{BezuglyiEtAl2010} for more background.

Consider an ordered Bratteli diagram  $B=(V,E,\leq)$ satisfying the assumptions from above.
In particular, $X_B$ is properly ordered and gives hence rise to a Bratteli-Vershik system $(X_B,\phi_B)$.
For $v\in V_n\ssq V$ with $n\geq 1$, we set
\[X_v=\{(e_n)_{n\geq 0}\in X_B\:(e_n)_{n\geq 0} \text{ traverses } v\}.\]
Given $v\in V_n$ ($n\geq 1$) and an invariant measure $\mu$ of $(X_B,\phi_B)$, it is straightforward to see that for all finite paths $\gamma,\gamma'\in E(v_0,v)$, we have 
\begin{align}\label{eq: inv measure}
\mu([\gamma])=\mu([\gamma'])\qquad \text{and hence} \qquad \mu([\gamma])/\mu(X_v)=1/|E(v_0,v)|.
\end{align}

\section{Almost one-to-one extensions, extension triples and extended Bratteli diagrams}\label{sec: almost 1 to 1 extensions...}
The last decade has seen a number of important structural results on factor relations between Cantor minimal systems via their Bratteli-Vershik representations \cite{Sugisaki2011,GlasnerHost2013,AminiElliotGolestani2021,GolestaniHosseini2021}. 
We do not make direct use of these works, but instead develop what we need alongside introducing our own notation. 
One exception is \cite{Sugisaki2011}, which implicitly contains---in slightly different notation---the representation of almost one-to-one factor maps in Theorem~\ref{thm Sugisaki}.

In the following two subsections, we mainly introduce a convenient terminology and notation for dealing with almost $1$-to-$1$ Cantor extensions.
Based on these preparations, we then introduce \emph{extended} Bratteli diagrams in Section~\ref{sec: extended BV}.
As a first application, we show the existence of irregular almost \aaa\ extensions for any Cantor minimal system, Theorem~\ref{thm: 2 to 1 extensions exist}.

\subsection{Notation}\label{sec: notation}
Given an ordered Bratteli diagram $B=(V,E,\leq)$, for each $n\geq 0$, we write $V_n=\{v_0(n),\ldots,v_{|V_n|-1}(n)\}$ and
\[
    E_n=\{e_{\ell,m}(n)\:\ell=0,\ldots, |V_{n+1}|-1 \text{ and } m=0,\ldots, r_\ell(n)-1\}.
\] 
Here,
\begin{align}\label{eq: defn rell}
r_\ell(n)=|\{e\in E_{n}\:r(e)=v_\ell(n+1)\}| 
\end{align}
and $(e_{\ell,m})_{m=0,\ldots,r_\ell(n)-1}$ is the family of all edges which end in $v_\ell(n+1)$.
We assume that for all $n\geq0$, $e_{\ell,M}(n)>e_{\ell,m}(n)$ if $M>m$.
If there is no risk of ambiguity, we may suppress the dependence on the level $n$ in any of the notation introduced so far.

Given two properly ordered Bratteli diagrams $B=(V,E,\leq)$ and $\hat B=(\hat E,\hat V,\hat\leq)$, we say $\B$ is obtained by \emph{copy-pasting $B$} if for each vertex $v_\ell \in V_n$ (with $n\geq 1$), there is at least one copy in $\V_n$ which is connected---via the edges from $\hat E_{n-1}$---to the vertices of $\hat V_{n-1}$ in a similar way the vertex $v_\ell$ is connected---via the edges from $E_{n-1}$---to the vertices in $V_{n-1}$.
In formal terms, $\B$ is obtained by copy-pasting $B$ if---possibly after relabelling---we have $\V_0=\{v_0\}$ and
\begin{enumerate}[(i)]
 \item For each $n\geq 1$ and $\ell=0,\ldots,|V_n|-1$, there is $j_\ell(n)\in \N$ such that \label{item: copies of vertices}
 \[\hat V_n=\{v_\ell^{j}(n)\:0\leq\ell<|V_n|,\, j=0,\ldots,j_\ell(n)-1\};\]
 \item For each $n\geq 0$ and with $r_\ell(n)$ as in \eqref{eq: defn rell}, \label{item: copies of egdes}
 \[\hat E_{n}=\{e_{\ell,m}^{j}(n)\:0\leq\ell< |V_{n+1}|,\, 0\leq m< r_\ell(n),\, 0\leq j< j_\ell(n+1)\},\]
 where $r(e_{\ell,m}^j(n))=v_{\ell}^{j}(n+1)$ for all admissible $\ell$, $m$, and $j$;
 \item For each $n\geq1$ and all admissible $\ell,m,j$, there is $j' \in \{0,\ldots,j_{\ell'}(n)-1\}$ with
 $s(e_{\ell,m}^j(n))=v_{\ell'}^{j'}(n)$, where $\ell'$ satisfies
 $s(e_{\ell,m}(n))=v_{\ell'}(n)$ (in $B$);\label{item: copied edges are connected like their originals} 
 \item The order $\hat \leq $ on $\E$ is inherited from the order $\leq$ on $E$ in the obvious way, that is, $e_{\ell,m}^j(n)\,{\hat <}\,e_{\ell',m'}^{j'}(n')$ if
and only if $j=j'$ and $e_{\ell,m}(n)<e_{\ell',m'}(n')$ (in $B$).
\end{enumerate}
If $\B$ is obtained by copy-pasting $B$, we call the vertices $v_\ell^j(n)$ and edges $e_{\ell,m}^j(n)$ \emph{copies} of $v_\ell(n)$ and $e_{\ell,m}(n)$, respectively.
We may also call $v_0\in \V_0$ a copy of $v_0\in V_0$.
With this terminology, we can rephrase \eqref{item: copied edges are connected like their originals} by saying that if
$e_{\ell,m}(n)$ is an edge that starts in a vertex
$v_{\ell'}(n)$, then each copy of
$e_{\ell,m}(n)$ starts in a copy of $v_{\ell'}(n)$.
See Figure~\ref{fig: extension of sturmian} for an example of a diagram that is obtained via copy-pasting the diagram from Figure~\ref{fig: general sturmian bv diagram}.
 \begin{figure}[ht]
\begin{center}
 \includegraphics[scale=0.97, alt={A graph showing vertices arranged by levels $V_0$ through $V_3$, corresponding to those in Figure~\ref{fig: general sturmian bv diagram}. 
 Level $V_0$ contains a single vertex. 
 Level $V_1$ has three vertices: two are copies of vertex $0$ from diagram $B$ in Figure~\ref{fig: general sturmian bv diagram}, and one is a copy of vertex $1$. 
 Levels $V_2$ and $V_3$ each contain four vertices, with two copies of each vertex from diagram $B$.}]{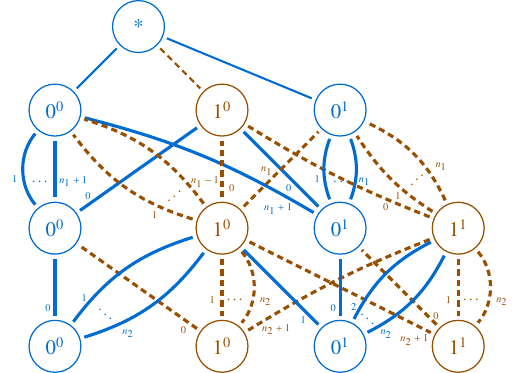}
 \caption{First levels of an ordered Bratteli diagram $\B$ obtained by copy-pasting diagram $B$ from Figure~\ref{fig: general sturmian bv diagram}.
 On each level, vertices sharing the same color---that is, those whose labels have the same base (e.g., $0^0$ and $0^1$)---are copies of the correspondingly coloured vertex (also identified by the base label, disregarding the exponent) on the same level in $B$.}
 \label{fig: extension of sturmian}
\end{center}
\end{figure}
\subsection{Extension triples}\label{sec: collapsing map}
Given diagrams $B$ and $\B$ as above, we call
\begin{align*}
 \pi\: X_{\B}\to X_{B},\qquad (e_{\ell_n,m_n}^{j_n}(n))_{n\geq 0}\mapsto
 (e_{\ell_n,m_n}(n))_{n\geq 0}
\end{align*}
the associated \emph{collapsing map} (compare this to the terminology in \cite{GjerdeJohansen2000})
and refer to $(B,\hat B,\pi)$ as an \emph{extension triple}.
For the well-definition of $\pi$, observe that by \eqref{item: copies of egdes} and \eqref{item: copied edges are connected like their originals} from above, we have $s(e_{\ell_{n+1},m_{n+1}}^{j_{n+1}}(n+1))=r(e_{\ell_n,m_n}^{j_n}(n))=v_{\ell_{n}}^{j_{n}}(n+1)$ only if
$s(e_{\ell_{n+1},m_{n+1}}(n+1))=v_{\ell_{n}}(n+1)=r(e_{\ell_n,m_n}(n))$ so that indeed, $\pi$ maps paths to paths.

\begin{rem}\label{ref: rank is bounded by number of copies}
 Consider an extension triple $(B,\B,\pi)$ and let $n\in \N$.
 For later reference, we remark the obvious fact that
 if $\alpha\in X_B$ traverses infinitely many vertices with 
 at most $n$ copies in $\B$, then
 $|\pi^{-1}\alpha|\leq n$.
 In fact, 
 \[|\pi^{-1}\alpha|=\lim_{n\to \infty}|\{v\in \V_n\: \text{there is } \beta\in \pi^{-1}\alpha \text{ with } s(\beta_n)=v\}|.\]
\end{rem}

We call two extension triples $(B,\B,\pi)$ and $(B',\B',\pi')$ \emph{isomorphic} if there are isomorphisms
$h\: (X_{B},\phi_{B})\to (X_{B'},\phi_{B'})$ and $\hat h\: (X_{\hat B},\phi_{\hat B})\to (X_{\B'},\phi_{\B'})$ such that $\pi'= h\circ \pi\circ {\hat h}^{-1}$.\footnote{Using methods from \cite{HermanPutnamSkau1992}, one can show that the assumption on $\pi'$ is superfluous.
As we don't make use of this fact, we don't provide a proof here.\label{footnote: 1}}
Let us mention the obvious but important fact that if we telescope $B$ and $\B$ along one and the same sequence to obtain diagrams $B'$ and $\B'$, respectively, then $\hat B'$ is obtained by copy-pasting $B'$ and---denoting the corresponding collapsing map by $\pi'$---$(B',\B',\pi')$ is isomorphic to $(B,\B,\pi)$.

It is easy to see that $\pi$ is continuous and onto.
In fact, $\pi$ is a factor map.
To see this, consider a non-maximal path $(\gamma_n)_n=(e_{\ell_n,m_n}^{j_n}(n))_n\in X_{\B}$.
Let $n_0$ be the first level where $\gamma_{n_0}$ is not maximal so that $m_{n_0}<r_{\ell_{n_0}}-1$.
Then
\begin{align*}
&\pi\circ \phi_{\B} ((\gamma_n)_{n\geq0})\\
&=\pi(
e_{i_{0},0}^{k_{0}}(0),
\ldots,
e_{i_{n_0-1},0}^{k_{n_0-1}}(n_0-1),
e_{\ell_{n_0},m_{n_0}+1}^{j_{n_0}}(n_0),
e_{\ell_{n_0+1},m_{n_0+1}}^{j_{n_0+1}}(n_0+1),\ldots
)\\
&=
(
e_{i_{0},0}(0),
\ldots,
e_{i_{n_0-1},0}(n_0-1),
e_{\ell_{n_0},m_{n_0}+1}(n_0),
e_{\ell_{n_0+1},m_{n_0+1}}(n_0+1),\ldots
)\\
&=\phi_B \circ \pi (\gamma_n)_{n\geq 0},
\end{align*}
for appropriate $k_0,\ldots,k_{n_0-1}$ and $i_0,\ldots,i_{n_0-1}$.
As the set of non-maximal paths is dense in $X_{\B}$, this implies
$\pi\circ \phi_{\B}=\phi_B \circ \pi$ by continuity.
Finally, in addition to being a factor map, observe that $\pi$ only maps the minimal (maximal) path in $X_{\B}$ to the minimal (maximal) path in $X_B$---recall that $B$ and $\B$ are throughout assumed to be properly ordered.

Altogether, the above discussion shows that if $\B$ is obtained by copy-pasting $B$, then the collapsing map $\pi\: X_{\B}\to X_B$ is an almost $1$-to-$1$ factor map.
As a matter of fact, we have the following converse, whose proof
is a basic application of the methods developed in \cite{HermanPutnamSkau1992}
and which, at least implicitly, is contained in the proof of \cite[Theorem~3.1]{Sugisaki2011}.\footnote{Note that in the special case of Toeplitz flows (symbolic almost $1$-to-$1$ extensions of odometers), Theorem~\ref{thm Sugisaki} reduces to \cite[Theorem~8]{GjerdeJohansen2000}.
What we call \emph{copy-pasting} here simplifies to the \emph{equal path number property} in \cite{GjerdeJohansen2000}.}
\begin{thm}[{\cite[Theorem~3.1]{Sugisaki2011}}]\label{thm Sugisaki}
 Suppose $(X,f)$ and $(\hat X,\hat f)$ are Cantor minimal systems
 and $q\:(\hat X,\hat f)\to (X,f)$ is an almost $1$-to-$1$ factor map.
 Let $(B,h)$ be a Bratteli-Vershik representation of $(X,f)$
such that $q^{-1}h(\gamma^-)$ is a regular
 $q$-fibre, where $\gamma^-$ is the minimal path in $B$.
 
 Then there is a representation $(\B,\hat h)$ of $(\hat X,\hat f)$ such that $(B,\B,h^{-1}\circ q\circ {\hat h})$
 is an extension triple.
\end{thm}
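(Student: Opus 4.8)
The plan is to start from the given Bratteli--Vershik representation $(B,h)$ of the base system $(X,f)$ and use the almost $1$-to-$1$ factor map $q$ to ``refine'' $B$ into an extended diagram $\hat B$ whose path space is isomorphic to $\hat X$. Concretely, I would use the machinery of \cite{HermanPutnamSkau1992}: given any Cantor minimal system and a chosen point, one builds a nested sequence of Kakutani--Rokhlin (KR) partitions, and the combinatorics of how level-$n$ towers sit inside level-$(n+1)$ towers produces the Bratteli diagram. The key idea is to build the KR partitions for $\hat X$ \emph{compatibly} with those already encoding $B$ for $X$, so that $q$ is realised at the level of partitions as a refinement.

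The main steps, in order, would be the following. First, recall that the representation $(B,h)$ of $(X,f)$ comes from a sequence of clopen KR partitions $\mathcal P_n$ of $X$ refining to points, with $h(\gamma^-)$ the distinguished point sitting at the base of all towers. Pull these back via $q$: since $q$ is a factor map, the sets $q^{-1}(P)$ for $P\in \mathcal P_n$ are clopen and form a $\hat f$-KR partition $\hat{\mathcal P}_n$ of $\hat X$ with exactly the same tower structure (same number of towers, same heights), because $q$ intertwines $f$ and $\hat f$. Second, refine each $\hat{\mathcal P}_n$ further within $\hat X$ so that the diameters of its atoms tend to $0$; this is possible because $\hat X$ is a Cantor set and the $\hat f$-orbit of the regular fibre point $q^{-1}h(\gamma^-)$ is dense. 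The crucial point here is that each tower of $\hat{\mathcal P}_n$ splits into finitely many towers of the refined partition, all of the \emph{same height} as the original $X$-tower, because the refinement is obtained by subdividing only the base, not by regrouping along the dynamics. This ``same-height-as-original'' property is exactly what turns into the copy-pasting rules (i)--(iv): each vertex $v_\ell(n)$ of $B$ gives rise to several copies $v_\ell^j(n)$ (one per sub-base), and the edge pattern of each copy replicates that of $v_\ell(n)$ because the dynamics over the taller level-$(n+1)$ tower is unchanged.

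Third, I would verify that the diagram $\hat B$ so obtained is properly ordered and that its Vershik system is isomorphic to $(\hat X,\hat f)$ via the map $\hat h$ induced by the refining partitions $\hat{\mathcal P}_n$; this is the standard correspondence between refining KR partitions and Bratteli--Vershik systems from \cite{HermanPutnamSkau1992}. Finally, I would check the intertwining relation $h^{-1}\circ q \circ \hat h = \pi$: by construction the collapsing map $\pi$ forgets precisely the ``copy index'' $j$, which is exactly the extra subdivision of the bases we introduced, so $q$ collapses each refined $\hat X$-tower back onto the corresponding $X$-tower. The hypothesis that $q^{-1}h(\gamma^-)$ is a \emph{regular} fibre is what guarantees the orders match up at the unique minimal path, so that $\hat B$ inherits a proper order and $\pi$ sends minimal to minimal and maximal to maximal.

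The main obstacle I anticipate is the second step---arranging the refinement of $\hat{\mathcal P}_n$ so that it genuinely preserves tower heights and yields the clean copy-pasting combinatorics, rather than a general (non-copy-pasted) diagram. One must refine the base of each tower in $\hat X$ without inadvertently cutting through the tower in a way that changes return times to the base; ensuring the distinguished point $q^{-1}h(\gamma^-)$ remains at the unique base atom throughout, and that telescoping (if needed) restores the exact synchronisation of heights between $B$ and $\hat B$, is the delicate part. Since this is precisely the content carried out in \cite[Theorem~3.1]{Sugisaki2011}, I would invoke that construction for the technical bookkeeping and focus my own argument on translating its output into the copy-pasting language of Section~\ref{sec: notation}.
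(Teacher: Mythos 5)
Your proposal follows exactly the route the paper indicates: the paper does not prove this statement itself but cites it as implicit in \cite[Theorem~3.1]{Sugisaki2011} and as a basic application of the Kakutani--Rokhlin partition machinery of \cite{HermanPutnamSkau1992}, which is precisely the construction you sketch (pulling back the partitions encoding $B$ along $q$, refining only the tower bases so that heights are preserved, and reading off the copy-pasting combinatorics). One small refinement: the primary role of the regular-fibre hypothesis is to guarantee that the pulled-back bases $q^{-1}(B_n)$ intersect in a single point of $\hat X$, so that the Herman--Putnam--Skau correspondence yields a properly ordered diagram at all; the matching of minimal paths that you mention is a consequence of this.
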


\begin{rem}
As mentioned in footnote \ref{footnote: 1}, one can show that two
extension triples $(B,\B,\pi)$ and $(B',\B',\pi')$ are necessarily isomorphic whenever
$(X_{B},\phi_{B})$ and $(X_{B'},\phi_{B'})$ as well as $(X_{\hat B},\phi_{\hat B})$ and $(X_{\B'},\phi_{\B'})$ are isomorphic.
Moreover, it is easy to see that if $(B,\B,\pi)$ and $(B',\B',\pi')$ are isomorphic, then $\pi$ is irregular if and only if $\pi'$ is.
With Theorem~\ref{thm Sugisaki}, this has an interesting consequence: 
as long as we deal with almost $1$-to-$1$ Cantor extensions of Cantor minimal systems, (ir)regularity is independent of the specific factor map.
The author is not aware of this fact having been observed elsewhere.
\end{rem}

\begin{rem}\label{rem: adapted to a partition}
Suppose we are in the situation of Theorem~\ref{thm Sugisaki}, and note that the collection $C_0$ of all $0$-cylinders in $X_{\hat B}$ partitions $\hat X=\hat h(\hat X_{\B})$.
Below (specifically, in the second part of the proof of Theorem~\ref{thm: translation to almost automorphic shifts}), given some prescribed clopen partition $P=\{P_1,P_2,\ldots,P_\ell\}$ of $\hat X$, we may want that $(\B,h)$ is \emph{adapted} to $P$, that is, we may want that $C_0$ (or rather, its image under $\hat h$) coincides with $P$.

Clearly, 
we always find a representation where $C_0$
is finer than $P$---we just have to telescope $\B$ between level $0$ and some sufficiently high level $n$.
That is, we may assume without loss of generality that $\B$ is such that for each $e\in \hat E_0$, there is $i$ with $[e]\ssq P_i$ (by which we actually mean, $\hat h[e]\ssq P_i$).
Now, by suitably introducing one new level between level $0$ and level $1$ (with one vertex for each element of $P$), we readily obtain a representation $(\hat B',{\hat h}')$ of $(\hat X,\hat f)$ which is adapted to $P$.

Manipulating $B$ simultaneously in a similar way, we see that it is always possible to obtain representations $(B',h')$ and $(\B',{\hat h}')$ of $(X,f)$ and $(\hat X,\hat f)$, respectively, such that $\B'$ is adapted to $P$, $B'$ is adapted to some prescribed clopen partition $Q$ of $X$ for which $q^{-1}(Q)$ is coarser than (or equal to) $P$, and
$(B',\hat B',{h'}^{-1}\circ q\circ {\hat h}')$ is an extension triple which is isomorphic to the original triple $(B,\hat B,{h}^{-1}\circ q\circ {\hat h})$.
\end{rem}

Given an extension triple $(B,\B,\pi)$, we define
\begin{align}\label{eq: defn discontinuities}
D=\{\alpha\in X_B\: \exists\, \beta=(\beta_n)_{n\geq 0},\beta'=(\beta_n')_{n\geq 0}\in\pi^{-1}\alpha \text{ with } \beta_0\neq \beta'_0\}.
\end{align}
It is easy to see that $D$ is closed with empty interior and thus, in particular, nowhere dense.

Note that $\pi$ is irregular if $\mu(D)>0$ for all invariant measures $\mu$ of $(X_{B},\phi_{B})$.
This is not a necessary criterion: recall that if $(B,\B,\pi)$ and $(B',\B',\pi')$ are isomorphic, then $\pi$ is irregular if and only if $\pi'$ is.
However, given any extension triple
$(B,\B,\pi)$, by adding a new lowest level $V_{-1}$ ($\hat V_{-1}$) below level $0$ to $B$ ($\B$) with only one vertex which is connected to the sole vertex of $V_0$ ($\hat V_0$) by exactly one edge, we obtain an extension triple $(B',\B',\pi')$ which is isomorphic to $(B,\B,\pi)$ while the corresponding set $D$ is empty.

Nonetheless, if $(B,\B,\pi)$ is an extension triple with $\pi$ irregular and
such that $(X_{\B},\phi_{\B})$ is $0$-expansive (see the paragraph preceding Theorem~\ref{thm: 2 to 1 extensions exist} in Section ~\ref{sec: irregular extension}), then it is easy to see that
$\mu(D)$ is necessarily positive for each invariant measure $\mu$ of $(X_{B},\phi_{B})$.
More generally, if $(B,\B,\pi)$ is an extension triple with $\pi$ irregular and
such that $(X_{\B},\phi_{\B})$ is expansive or $(X_{B},\phi_{B})$ has only finitely many ergodic measures, it is not hard to see that through telescoping, we can always obtain an isomorphic extension triple $(B',\B',\pi')$ where $\mu(D)>0$ for each invariant measure $\mu$ of $(X_{B'},\phi_{B'})$.
If $(X_{\B},\phi_{\B})$ is not expansive and $(X_{B},\phi_{B})$ has infinitely many distinct ergodic measures, we can still obtain an isomorphic extension triple $(B',\B',\pi')$
for each finite collection of invariant measures $\mu_1,\ldots,\mu_n$ of $(X_{B},\phi_{B})$
such that
$\mu_i(D)>0$ $(i=1,\ldots,n)$ (where we identify the invariant measures of $(X_{B},\phi_{B})$ and $(X_{B'},\phi_{B'})$ in the obvious way).

Given $\alpha\in X_B$, set 
\[
S_{D}^n(\alpha)=\frac1n\cdot \sum_{i=0}^{n-1}\mathbf 1_{D}(\phi_B^i \alpha)\  \text{ for } n\in \N \ \text{ and } \ S_D(\alpha)=\bigcap_{N\in \N}\overline{\left\{S_{D}^n(\alpha)\: n\geq N\right\}}. 
\]
That is, $S_D(\alpha)$ is the collection of all asymptotic frequencies of visits which $\alpha$ pays to $D$ under iteration of $\phi_B$.
We denote the union of all such collections by
\begin{align}\label{eq: defn birkhoff spectrum}
 \SD=\bigcup_{\alpha\in X_B}S_D(\alpha)
\end{align}
and refer to $\SD$ as the \emph{Birkhoff spectrum} of $(B,\B,\pi)$.

\subsection{Extended Bratteli diagrams}\label{sec: extended BV}
Given an extension triple $(B,\hat B,\pi)$,
we next define an associated extended Bratteli diagram.
This extends the respective notion from \cite{FuhrmannKellendonkYassawi2021}.

Given an extension triple $(B,\B,\pi)$, let 
$\mc B'=(\mc V,\mc E)$ with $\mc V=\bigcup_{n\geq 0} \mc V_n$ and
$\mc E=\bigcup_{n\geq 0} \mc E_n$
be the infinite graph
whose  vertices $\mc V$, edges $\mc E$, and associated range and source maps $r,s\: \mc V\to \mc E$ are as follows.
\begin{enumerate}
\item 
For $n\geq 0$, $\mc V_n=\bigcup_{v\in V_n}\mc V_n^v$, where $\mc V_n^v$ is the collection of all non-empty subsets of $\V_n$ which only contain copies of the vertex $v\in V_n$.
\item Given $n\geq0$, $A=\{v_{k}^{i_1},\ldots,v_{k}^{i_s}\}\in \mc V_{n}$, and $B=\{v_{\ell}^{j_1},\ldots,v_{\ell}^{j_t}\}\in \mc V_{n+1}$, $\mc E_n$ contains exactly one edge $e$ with source $s(e)=A$ and
range $r(e)=B$ for each $e_{\ell,m}\in E_n$ with $s(\{e_{\ell,m}^{j_1},\ldots,e_{\ell,m}^{j_t}\})=A$ (where $s$ is the source map of $\B$).
We may refer to such $e$ as $e_{\ell,m}^{\{j_1,\ldots,j_t\}}$.
\suspend{enumerate}
While the above satisfies the assumptions (1)--(2) of a Bratteli diagram,
we may well have vertices which are not the source of any edge in $\mc E$; that is, assumption (3) may be violated. 
However, by removing all vertices from $\mc B'$ which
are not traversed by an infinite path in $\mc B'$, we obtain a Bratteli diagram $\mc B$.
For simplicity, we keep denoting the resulting collections of vertices and edges by $\mc V$, $\mc V_n$ and
$\mc E$, $\mc E_n$, respectively.
See Figure~\ref{fig: extended BV} for the extended Bratteli diagram
for $B$ and $\B$ from Figure~\ref{fig: general sturmian bv diagram}
and Figure~\ref{fig: extension of sturmian}, respectively.
 \begin{figure}[ht]
\begin{center}
 \includegraphics[scale=0.97, alt={A graph showing vertices arranged by levels $V_0$ through $V_3$, corresponding to Figures~\ref{fig: general sturmian bv diagram} and \ref{fig: extension of sturmian}. 
 Level $V_0$ contains a single vertex. 
 Level $V_1$ has four vertices: the three vertices from diagram $\hat B$ in Figure~\ref{fig: extension of sturmian}, along with one additional vertex labelled $\{0^0,0^1\}$. 
 Levels $V_2$ and $V_3$ each contain six vertices: the four from $\hat B$, together with two additional vertices labelled $\{0^0,0^1\}$ and $\{1^0,1^1\}$, respectively.}]{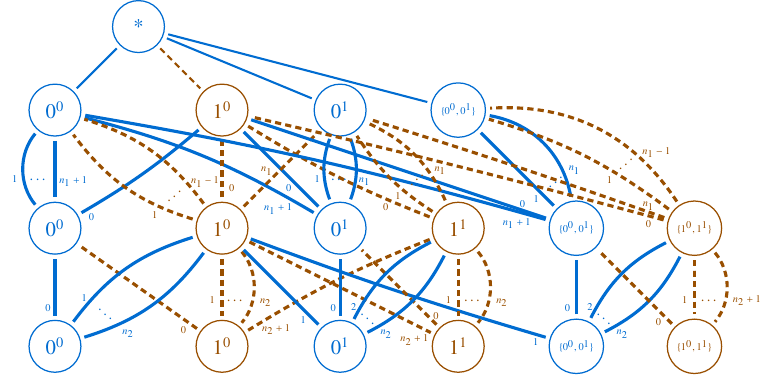}
 \caption{First levels of the extended Bratteli diagram corresponding to the diagrams $B$ and $\B$ from Figure~\ref{fig: general sturmian bv diagram} and Figure~\ref{fig: extension of sturmian}, respectively.
 Observe that while all vertices of the form $\{0^0,0^1\}$ and $\{1^0,1^1\}$ are shown in this figure, whether some of them should be removed---due to not being traversed by any infinite path---is determined by the structure at successive levels.}
 \label{fig: extended BV}
\end{center}
\end{figure}

Note that in contrast to our standard assumptions on
Bratteli diagrams, $\mc B$ is not simple (unless $\pi$ is [strictly] $1$-to-$1$).
Moreover, the space $X_{\mc B}$ of infinite paths
starting in $\mc V_0$ may have isolated points and hence, may not be a Cantor set.
In any case,
\resume{enumerate}
\item $\mc E$ inherits the order from $B$, that is, $e_{\ell,m}^{\{j_1,\ldots,j_t\}}<e_{\ell',m'}^{\{j_1',\ldots,j_t'\}}$ if and only if $\{j_1,\ldots,j_t\}=\{j_1',\ldots,j_t'\}$ and
$e_{\ell,m}<e_{\ell',m'}$.
\end{enumerate}
We call the ordered Bratteli diagram $\mc B$, defined in this manner, the \emph{extended Bratteli diagram} of $(B,\hat B,\pi)$.
Note that $X_{\mc B}$ has a unique minimal (maximal) path so that
the Vershik map $\phi_{\mc B}$ defines a homeomorphism on $X_{\mc B}$.
As $\mc B$ is not simple, $(X_{\mc B},\phi_{\mc B})$ is not minimal.

Identifying singleton sets with their unique element, $\mc B$ can be seen to contain $\B$ as a sub-diagram. 
In this sense, $X_{\mc B}$ contains $X_{\B}$.
Further, the collapsing map $\pi$ naturally extends to a map from $X_{\mc B}$ to $X_B$, which we denote by the same letter $\pi$.
Just as before, one can see that $\pi$ is a factor map.

We write $\mc V_1^+=\mc V_1\setminus \hat V_1$---where, as already mentioned, we identify the singleton vertices in $\mc V_1$ with $\hat V_1$.
That is, $\mc V_1^+$ is the set of vertices in $\mc V_1$ which contain at least two elements.
Given $\alpha\in X_{B}$, note that
 $\alpha\in D$ if and only if there is some $\gamma\in \pi^{-1}(\{\alpha\})$ in $X_{\mc B}$ with $r(\gamma_0)\in \mc V_1^+$.
 We may hence, compute the frequency of visits of $\alpha$ to $D$---or rather, a lower bound for it---by
 computing the frequency of visits of $\gamma$ to $\mc V^+_1$.
 Specifically, given a path $\gamma$ in $X_{\mc B}$, we set
 \begin{align}\label{eq: defn Sn}
  S^n(\gamma)=\frac1n\cdot \sum_{i=0}^{n-1}\mathbf 1_{\mc V_1^+}\phi_{\mc B}^i(\gamma) \quad \text{ for  } n\in \N,
 \end{align}
 where $\mathbf 1_{\mc V_1^+}$ denotes---slightly abusing notation---the indicator function of the set $\{(\gamma_0,\gamma_1,\ldots)\in X_{\mc B}\: r(\gamma_0)\in \mc V_1^+ \}$.
 \begin{prop}\label{prop: Sn is lower bound for SDn}
    Let $(B,\B,\pi)$ be an extension triple and $\mc B$ its extended Bratteli diagram.
    For each $\gamma\in X_{\mc B}$ and all $n\in \N$, $S^n(\gamma)\leq S_D^n(\pi(\gamma))$.
 \end{prop}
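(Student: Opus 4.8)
The plan is to reduce the claimed averaged inequality to a termwise inequality of indicator functions and then invoke the characterisation of $D$ in terms of paths whose zeroth edge ranges into $\mc V_1^+$. To begin, I would record that the extended collapsing map $\pi\:X_{\mc B}\to X_B$ is a factor map and hence intertwines the two Vershik maps: $\pi\circ \phi_{\mc B}^i=\phi_B^i\circ \pi$ for every $i\geq 0$. In particular $\pi(\phi_{\mc B}^i(\gamma))=\phi_B^i(\pi(\gamma))$, so it suffices to compare the summands defining $S^n(\gamma)$ and $S_D^n(\pi(\gamma))$ index by index.

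The core step is the pointwise inequality
\[\mathbf 1_{\mc V_1^+}\bigl(\phi_{\mc B}^i(\gamma)\bigr)\leq \mathbf 1_D\bigl(\phi_B^i(\pi(\gamma))\bigr)\qquad(i\geq 0).\]
To establish it, suppose the left-hand side equals $1$ and set $\delta=\phi_{\mc B}^i(\gamma)$ and $\alpha=\pi(\delta)=\phi_B^i(\pi(\gamma))$. By definition of $\mathbf 1_{\mc V_1^+}$, the path $\delta\in X_{\mc B}$ satisfies $r(\delta_0)\in \mc V_1^+$; since moreover $\delta\in \pi^{-1}(\{\alpha\})$, the characterisation of $D$ noted just before the statement---that $\alpha\in D$ if and only if some preimage path in $X_{\mc B}$ has its zeroth edge ranging into $\mc V_1^+$---immediately yields $\alpha\in D$, so the right-hand side equals $1$ as well. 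Summing this inequality over $i=0,\ldots,n-1$ and dividing by $n$ then gives $S^n(\gamma)\leq S_D^n(\pi(\gamma))$, as required.

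I do not expect any serious obstacle: the statement is essentially an unwinding of the definitions, and the only point requiring a little care is the commutation $\pi\circ\phi_{\mc B}=\phi_B\circ\pi$ for the \emph{extended} map on $X_{\mc B}$ (rather than on $X_{\B}$), which has already been asserted when $\pi$ was extended to a factor map. It is worth emphasising that the inequality is genuinely one-sided and not an equality: the point $\alpha=\pi(\gamma)$ may lie in $D$ on account of some preimage path other than $\gamma$ itself, which is exactly why $S^n$ yields only a lower bound for $S_D^n$.
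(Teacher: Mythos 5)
Your proposal is correct and follows essentially the same route as the paper: both reduce the claim to the termwise inequality $\mathbf 1_{\mc V_1^+}(\phi_{\mc B}^i(\gamma))\leq \mathbf 1_D(\phi_B^i(\pi(\gamma)))$ via the equivariance of the extended collapsing map, and then observe that $r(\delta_0)\in\mc V_1^+$ forces $\pi(\delta)\in D$. The only cosmetic difference is that you cite the characterisation of $D$ stated just before the proposition, whereas the paper's proof re-derives the relevant implication directly by exhibiting two paths $\beta,\beta'\in X_{\B}$ with $\beta_0\neq\beta_0'$ over $\phi_B^i\pi(\gamma)$.
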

\begin{proof}
Note that if $\phi_{\mc B}^i\gamma\in \mc V_1^+$, there are at least two paths $\beta,\beta'\in X_{\B}$ with $\beta_0\neq \beta_0'$ and $\pi(\beta)=\pi(\beta')=\pi(\phi_{\mc B}^i\gamma)=\phi^{i}_{B}\pi(\gamma)$.
 In other words, for each $i\geq 0$ with $\phi_{\mc B}^i\gamma\in \mc V_1^+$, we have 
 $\phi^i_B(\pi(\gamma))\in D$.
\end{proof}

In the other direction, we have
 \begin{prop}\label{prop: full preimage}
  Let $(B,\B,\pi)$ be an extension triple and $\mc B$ its extended Bratteli diagram.
   Consider $\alpha=(\alpha_n)_{n\geq 0}=(e_{\ell_n,m_n})_{n\geq 0}\in X_B$ and
   suppose for each $n\geq0$, $j_1(n),\ldots,j_{N(n)}(n)$ are such that 
   $\{v_{\ell_n}^{j_1(n)},\ldots,v_{\ell_n}^{j_{{N(n)}}(n)}\}$ equals
   \begin{align*}
&\{v\in \V_{n+1}\:\text{there is } \beta\in X_{\B} \text{ with } \pi(\beta)=\alpha \text{ and } r(\beta_{n})=v\}\\
&=\{v\in \V_{n+1}\:\text{there is } \beta\in X_{\B} \text{ with } \pi(\beta)=\alpha \text{ and } s(\beta_{n+1})=v\}.
    \end{align*}
   Then, for each $n\geq0$,
    $\{v_{\ell_n}^{j_1(n)},\ldots,v_{\ell_n}^{j_{{N(n)}}(n)}\}$ is a vertex in $\mc V_{n+1}$.
    Specifically, there is a path $\gamma=(\gamma_n)_{n\geq 0}$ in $X_{\mc B}$ with
   \begin{align}\label{eq: defn maximal preimage}
    \gamma_n=e_{\ell_n,m_n}^{\{j_1(n),\ldots,j_{N(n)}(n)\}} \qquad (n\geq0),
   \end{align}
 and 
 any $\gamma'\in \pi^{-1}(\alpha)\ssq X_{\mc B}$ which coincides with $\gamma$ on infinitely many levels equals $\gamma$.
 
 Moreover, $\gamma$ satisfies $S^n(\gamma)=S^n_D(\alpha)$ for all $n\in \N$.
\end{prop}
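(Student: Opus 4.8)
The plan is to organise everything around the section $\Gamma\colon X_B\to X_{\mc B}$ sending $\alpha$ to the candidate path in \eqref{eq: defn maximal preimage}, and to show that $\Gamma$ intertwines the two Vershik maps. Throughout I write $W_n=\{v_{\ell_n}^{j_1(n)},\dots,v_{\ell_n}^{j_{N(n)}(n)}\}$ for the common value of the two displayed sets (they coincide because $r(\beta_n)=s(\beta_{n+1})$ for every $\beta\in X_{\B}$), and $P(\alpha)=\pi^{-1}(\alpha)\cap X_{\B}$. I would first settle the well-definition and path property. Since any $\beta\in P(\alpha)$ has $\beta_n$ equal to a copy of $\alpha_n=e_{\ell_n,m_n}$, its range $r(\beta_n)$ is a copy of $v_{\ell_n}(n+1)$; hence $W_n$ is a non-empty set of copies of the single vertex $v_{\ell_n}(n+1)$, i.e. a candidate vertex of $\mc V_{n+1}$. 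I would then check the path condition $s(\gamma_n)=r(\gamma_{n-1})$ directly from the edge definition: unwinding $s(e_{\ell_n,m_n}^{\{j_1(n),\dots,j_{N(n)}(n)\}})=\{s(e_{\ell_n,m_n}^{j_k(n)})\}_k$ and realising each $v_{\ell_n}^{j_k(n)}\in W_n$ by some $\beta\in P(\alpha)$ gives $s(\gamma_n)=W_{n-1}$ (both inclusions follow by reading off $s(\beta_n)=r(\beta_{n-1})\in W_{n-1}$, and conversely). Thus $\gamma$ is a genuine infinite path in $X_{\mc B}$ traversing every $W_n$, which simultaneously confirms $W_n\in\mc V_{n+1}$ after the removal of non-traversed vertices.

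The rigidity assertion is driven by the very identity $s(\gamma_n)=W_{n-1}$ just obtained. If $\gamma'\in\pi^{-1}(\alpha)\cap X_{\mc B}$ satisfies $r(\gamma'_N)=W_N$ at some level $N$—equivalently $\gamma'_N=\gamma_N$, since both edges project to $\alpha_N$ under $\pi$—then $r(\gamma'_{N-1})=s(\gamma'_N)=W_{N-1}$, so $\gamma'_{N-1}=\gamma_{N-1}$; iterating downward yields $\gamma'_n=\gamma_n$ for all $n\le N$. Coincidence on infinitely many levels therefore forces coincidence everywhere. I would also record here the containment $r(\gamma'_n)\subseteq W_n$, valid for \emph{every} $\gamma'\in\pi^{-1}(\alpha)\cap X_{\mc B}$: for a chosen exponent $j$ of $r(\gamma'_n)$ one extracts an honest preimage $\beta\in P(\alpha)$ by threading copies upward (using $v_{\ell_n}^{j}\in s(\gamma'_{n+1})$) and downward (using copy-pasting item~\ref{item: copied edges are connected like their originals}). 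Combined with the characterisation in the paragraph preceding Proposition~\ref{prop: Sn is lower bound for SDn}, this gives the key pointwise identity $\mathbf 1_{\mc V_1^+}(\Gamma(\alpha))=\mathbf 1_D(\alpha)$, i.e. $\alpha\in D$ exactly when $|W_0|\ge 2$.

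The core of the proof, and the step I expect to be the main obstacle, is the equivariance $\phi_{\mc B}\circ\Gamma=\Gamma\circ\phi_B$. The decisive observation is that an edge $e_{\ell,m}^{A}$ is maximal in $\mc B$ if and only if $e_{\ell,m}$ is maximal in $B$—the order on $\mc E$ only refines within a fixed range vertex—so for every $\delta\in X_{\mc B}$ the first non-maximal level of $\delta$ equals that of $\pi(\delta)$, and likewise in $X_{\B}$. Fix a non-maximal $\alpha$ with first non-maximal level $n_0$. Then $\phi_{\mc B}$ replaces $\gamma_{n_0}$ by $e_{\ell_{n_0},m_{n_0}+1}^{W_{n_0}}$—the \emph{same} range vertex $W_{n_0}$, as incrementing $m$ leaves $r(e_{\ell,m}^j)=v_\ell^j$ unchanged—and leaves all levels above $n_0$ untouched, mirroring the action of $\phi_B$ on $\alpha$. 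Because $P(\phi_B\alpha)=\phi_{\B}(P(\alpha))$ and every $\beta\in P(\alpha)$ again has first non-maximal level $n_0$, one gets $W_n(\phi_B\alpha)=W_n(\alpha)$ for all $n\ge n_0$; hence $\phi_{\mc B}(\gamma)$ and $\Gamma(\phi_B\alpha)$ agree on every level $\ge n_0$, and the rigidity step forces them to be equal. The maximal path is dispatched separately: $\Gamma(\gamma^+)$ (with $\gamma^+$ the maximal path of $B$) has all edges maximal, so it is the unique maximal path of $\mc B$, which $\phi_{\mc B}$ sends to the unique minimal path $=\Gamma(\gamma^-)$, matching $\phi_B\gamma^+=\gamma^-$.

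Iterating single-step equivariance gives $\phi_{\mc B}^i\Gamma(\alpha)=\Gamma(\phi_B^i\alpha)$ for all $i\ge 0$, whence $\mathbf 1_{\mc V_1^+}(\phi_{\mc B}^i\gamma)=\mathbf 1_{\mc V_1^+}(\Gamma(\phi_B^i\alpha))=\mathbf 1_D(\phi_B^i\alpha)$ by the pointwise identity; summing over $i=0,\dots,n-1$ yields $S^n(\gamma)=S^n_D(\alpha)$. Note that Proposition~\ref{prop: Sn is lower bound for SDn} already secures the inequality $S^n(\gamma)\le S^n_D(\alpha)$, so the genuinely new content is the matching lower bound, which is exactly what the fullness and equivariance of $\gamma$ provide.
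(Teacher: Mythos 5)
Your proposal is correct and follows essentially the same route as the paper: the path property via the double inclusion $r(\gamma_{n-1})=s(\gamma_n)$, rigidity via the observation that an edge of $\mc B$ (hence the whole $n$-head) is determined by its range together with its $\pi$-projection, and the ``moreover'' part by showing that iterates of the full preimage are full preimages of iterates and invoking Proposition~\ref{prop: Sn is lower bound for SDn} for the reverse inequality. The only difference is presentational: you spell out the equivariance $\phi_{\mc B}\circ\Gamma=\Gamma\circ\phi_B$ via the first-non-maximal-level analysis, a step the paper compresses into a one-line consequence of the same uniqueness observation.
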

\begin{rem}\label{rem: full preimages in finite-to-1 extensions}
 We may call $\gamma$ the \emph{full preimage} of $\alpha$ (to avoid using the term \emph{maximal} in the present context).
 Note that the above statement gives that if the collapsing map $\pi\: X_{\B} \to X_B$ has fibres of cardinality at most $n$, then any path 
$\gamma\in X_{\mc B}$ which traverses  a vertex with $n$ elements is necessarily a full preimage of its projection $\pi(\gamma)\in X_B$.
\end{rem}

\begin{proof}
First, we discuss why $\gamma$, defined via \eqref{eq: defn maximal preimage}, is indeed a path, that is, we discuss $s(\gamma_{n})=r(\gamma_{n-1})$ for $n\in \N$. 
 By definition, for each $n\in \N$ and each $v\in r(\gamma_{n-1})$, there is $\beta\in X_{\B}$ with $\pi(\beta)=\alpha$ and $r(\beta_{n-1})=v$.
 Clearly, $r(\beta_{n-1})=s(\beta_{n})\in s(\gamma_n)$ so that $r(\gamma_{n-1})\subseteq s(\gamma_n)$.
 The other inclusion works similarly, and $\gamma$ is thus a path in $X_{\mc B}$.
 The second part ($\gamma=\gamma'$) readily follows from the simple observation that given $\alpha\in X_B$ and $n\in \N$, the $n$-head of any $\gamma'\in X_{\mc B}$ with $\pi(\gamma')=\alpha$ is uniquely determined by $r(\gamma'_n)$.
 Note that this further implies
 that the iterate $\phi_{\mc B}^i \gamma$ ($i\in\Z$) of the full preimage of a path $\alpha$ is the full preimage of $\phi_B^i\alpha$.
 
 For the ''moreover``-part, 
consider $i\geq 0$ such that $\phi^i_B\alpha\in D$; if no such $i$ exists, $S^n_D(\alpha)=0$ for all $n$ and the statement holds due to Proposition~\ref{prop: Sn is lower bound for SDn}.
 Take $\beta,\beta'\in X_{\B}$ with $\pi(\beta)=\pi(\beta')=\phi_B^i\alpha$ and $\beta_0\neq \beta_0'$ (so that $r(\beta_0)\neq r(\beta_0')$).
 Note that $r((\phi_{\mc B}^i \gamma)_0)\supseteq \{r(\beta_0),r(\beta_0')\}$.
 In particular, $\phi_{\mc B}^i(\gamma)\in \mc V_1^+$.
 As $i$ was arbitrary, $S^n(\gamma)\geq S^n_D(\alpha)$ for all $n\geq 0$. 
 With Proposition~\ref{prop: Sn is lower bound for SDn}, we get $S^n(\gamma)= S^n_D(\alpha)$. 
\end{proof}

\subsection{Irregular extensions}\label{sec: irregular extension}
As an application of the discussion so far, we next show that every Cantor minimal system allows for an  irregular almost \aaa\ extension.
This statement is interesting in its own right (see Remark~\ref{rem: entropy conjecture model sets}) but moreover, shows that the assumptions of Theorem~\ref{thm: main} below are meaningfully satisfied.
Further, its proof---while technically less demanding---can be seen as a precursor to the construction in Section~\ref{sec: non-maximal entire section}.

For $v\in V_n$ with $n\geq 1$, set 
\[
 E^D(v_0,v)=\{(e_0,\ldots,e_{n-1})\in E(v_0,v)\: [e_0,\ldots,e_{n-1}]\cap D\neq \emptyset\}
\]
and 
\[
 X_v^D=\{\alpha\in X_v\: (\alpha_0,\ldots,\alpha_{n-1})\in E^D(v_0,v)\}.
\]
Clearly, for each $(e_0,\ldots,e_{n-1})\in E^D(v_0,v)$ there 
is $\gamma\in X_{\mc B}$ such that $\pi(\gamma)\in [e_0,\ldots,e_{n-1}]$ and $r(\gamma_0)\in \mc V_1^+$.
\begin{prop}
Consider an extension triple $(B,\B,\pi)$ and $D$ as in \eqref{eq: defn discontinuities}.
Then
\begin{align}\label{eq: D}
 D=\bigcap_{n\in \N}\bigcup_{v\in V_n}X_v^D.
\end{align}
\end{prop}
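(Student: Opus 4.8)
The plan is to read both sides of \eqref{eq: D} through the cylinder topology on $X_B$ and then invoke that $D$ is closed.

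First I would identify, for each fixed $n\geq 1$, the union $\bigcup_{v\in V_n}X_v^D$ with the collection of all $\alpha\in X_B$ whose $n$-head cylinder $[\alpha_0,\ldots,\alpha_{n-1}]$ meets $D$. The point is that every infinite path $\alpha=(\alpha_m)_{m\geq 0}\in X_B$ traverses exactly one vertex on level $n$, namely $v=r(\alpha_{n-1})\in V_n$; hence $\alpha\in X_v$ for this single $v$, and by the definitions of $X_v^D$ and $E^D(v_0,v)$ we have $\alpha\in X_v^D$ precisely when $[\alpha_0,\ldots,\alpha_{n-1}]\cap D\neq\emptyset$. Writing $U_n=\bigcup_{v\in V_n}X_v^D$, this says that $U_n$ is exactly the union of those $n$-cylinders that intersect $D$; since there are only finitely many $n$-heads, $U_n$ is a finite union of cylinders and thus clopen.

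With this reformulation the inclusion $D\subseteq\bigcap_{n\in\N}U_n$ is immediate: any $\alpha\in D$ lies in its own $n$-cylinder, so that cylinder meets $D$ and $\alpha\in U_n$ for every $n$. For the reverse inclusion I would use that the $n$-cylinders form a neighbourhood basis of the topology on $X_B$. If $\alpha\in\bigcap_{n\in\N}U_n$, then for each $n$ there is a point $\alpha^{(n)}\in D$ agreeing with $\alpha$ on its $n$-head, whence $\alpha^{(n)}\to\alpha$ and $\alpha\in\overline D$; as $D$ is closed (noted right after \eqref{eq: defn discontinuities}), this yields $\alpha\in D$.

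The argument involves no genuine obstacle; the only step requiring a little care is the first one, the identification of $\bigcup_{v\in V_n}X_v^D$ with the union of $n$-cylinders meeting $D$, which rests entirely on each infinite path traversing a unique vertex per level. If one preferred not to cite closedness of $D$, the reverse inclusion can instead be obtained directly: choosing for each $n$ two preimages of $\alpha^{(n)}$ differing at level $0$, the finiteness of $\hat E_0$ together with compactness of $X_{\hat B}$ and continuity of $\pi$ produces, in the limit, two preimages of $\alpha$ differing at level $0$, so that $\alpha\in D$.
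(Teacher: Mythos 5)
Your proof is correct, and its overall shape matches the paper's: the inclusion $D\subseteq\bigcap_{n}\bigcup_{v\in V_n}X_v^D$ is immediate, and the reverse inclusion is obtained by a limiting argument. The one genuine difference lies in how that limit is taken. The paper chooses, for each $n$, a path $\gamma^{(n)}$ in the extended diagram $X_{\mc B}$ with $r(\gamma^{(n)}_0)\in\mc V_1^+$ whose projection lies in $[\alpha_0,\ldots,\alpha_{n-1}]$, and passes to a convergent subsequence in $X_{\mc B}$ --- in effect re-deriving the closedness of $D$ on the spot. You instead pick points $\alpha^{(n)}\in D\cap[\alpha_0,\ldots,\alpha_{n-1}]$, note that they converge to $\alpha$, and invoke the closedness of $D$ already recorded right after \eqref{eq: defn discontinuities}; this is marginally more economical and avoids the extended diagram altogether. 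Your suggested alternative (extracting, by finiteness of $\hat E_0$ and compactness of $X_{\B}$, two preimages of $\alpha$ differing at level $0$) is essentially the paper's compactness argument transplanted from $X_{\mc B}$ to $X_{\B}$. One cosmetic remark: in the paper's conventions $[\alpha_0,\ldots,\alpha_{n-1}]$ is an $(n-1)$-cylinder determined by the $(n-1)$-head rather than the $n$-head, but this indexing slip affects nothing in your argument.
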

\begin{proof}
 The inclusion $\ssq$ is immediate.
 For the other inclusion, consider a path $\alpha=(\alpha_0,\alpha_1,\ldots)\in \bigcap_{n\in \N}\bigcup_{v\in V_n}X_v^D$.
Note that for every $n\in \N$, we have some $\gamma^{(n)}\in X_{\mc B}$ with $r{(\gamma^{(n)}_0)}\in \mc V_1^+$
and $\pi(\gamma)\in [\alpha_0,\alpha_1,\ldots,\alpha_{n-1}]$.
Without loss of generality, we may assume that $\gamma^{(n)}$ converges to some $\gamma\in X_{\mc B}$.
Then $\pi(\gamma)=\alpha$ and $r(\gamma_0)\in \mc V^+_1$, that is, $\alpha\in D$.
\end{proof}
As an immediate consequence, we get
\begin{cor}\label{cor: irregular extension}
 Consider an extension triple $(B,\B,\pi)$ and $D$ as in \eqref{eq: defn discontinuities}.
 Suppose $\mu$ is an invariant measure of $(X_B,\phi_B)$.
 Then
 \[
  \mu(D)=\lim_{n\to \infty} \sum_{v\in V_n}\frac{|E^D(v_0,v)|}{|E(v_0,v)|} \cdot\mu(X_v).
 \]
 In particular, 
 \[
  \mu(D)\geq \varlimsup_{n\to \infty} \min_{v\in V_n}\frac{|E^D(v_0,v)|}{|E(v_0,v)|}.
 \]

\end{cor}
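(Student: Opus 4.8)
The plan is to reduce the claimed identity to continuity of the finite measure $\mu$ along a decreasing sequence of clopen sets, using the explicit cylinder masses recorded in \eqref{eq: inv measure}.

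First I would fix a level $n\geq 1$ and a vertex $v\in V_n$ and decompose the relevant clopen sets into cylinders. Since each infinite path traversing $v$ is determined up to its tail by its initial segment $(\alpha_0,\ldots,\alpha_{n-1})\in E(v_0,v)$, we have $X_v=\bigsqcup_{\gamma\in E(v_0,v)}[\gamma]$ and, by the definition of $X_v^D$, likewise $X_v^D=\bigsqcup_{\gamma\in E^D(v_0,v)}[\gamma]$. By \eqref{eq: inv measure}, every cylinder $[\gamma]$ with $\gamma\in E(v_0,v)$ carries the same $\mu$-mass $\mu(X_v)/|E(v_0,v)|$, so simply counting the paths in $E^D(v_0,v)$ gives
\[
\mu(X_v^D)=\frac{|E^D(v_0,v)|}{|E(v_0,v)|}\cdot\mu(X_v).
\]
As the sets $X_v$, and hence the $X_v^D$, are pairwise disjoint while $v$ ranges over $V_n$, summing over $v\in V_n$ identifies the sum appearing in the statement with $\mu(D_n)$, where $D_n:=\bigcup_{v\in V_n}X_v^D$.

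Next I would show that the sequence $(D_n)_{n\geq 1}$ is decreasing: if $\alpha\in D_{n+1}$, then the cylinder $[\alpha_0,\ldots,\alpha_n]$ meets $D$, hence so does the larger cylinder $[\alpha_0,\ldots,\alpha_{n-1}]\supseteq[\alpha_0,\ldots,\alpha_n]$, which gives $\alpha\in X_{r(\alpha_{n-1})}^D\subseteq D_n$. Combined with the preceding proposition, which via \eqref{eq: D} identifies $D=\bigcap_n D_n$, continuity of the probability measure $\mu$ from above then yields $\mu(D)=\lim_{n\to\infty}\mu(D_n)$, which is precisely the asserted limit.

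For the ''in particular`` part I would use that the sets $X_v$ with $v\in V_n$ partition $X_B$, so that $\sum_{v\in V_n}\mu(X_v)=1$; bounding each weight below by the minimal ratio then gives, for every $n$,
\[
\sum_{v\in V_n}\frac{|E^D(v_0,v)|}{|E(v_0,v)|}\cdot\mu(X_v)\geq\min_{v\in V_n}\frac{|E^D(v_0,v)|}{|E(v_0,v)|},
\]
and since the left-hand side converges to $\mu(D)$ while the right-hand side has a $\limsup$, the stated inequality follows. I do not expect a genuine obstacle; the only points needing a little care are the monotonicity $D_{n+1}\subseteq D_n$, which is what licenses continuity from above, and the bookkeeping of the disjoint cylinder decompositions that feeds into \eqref{eq: inv measure}.
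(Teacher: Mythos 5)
Your proposal is correct and follows essentially the same route as the paper: both compute $\mu(X_v^D)$ from the equidistribution of cylinder masses in \eqref{eq: inv measure} and then pass to the limit via \eqref{eq: D} and continuity of $\mu$ from above along the decreasing sets $D_n=\bigcup_{v\in V_n}X_v^D$. You merely make explicit two points the paper leaves implicit, namely the nestedness $D_{n+1}\ssq D_n$ and the derivation of the ``in particular'' inequality from $\sum_{v\in V_n}\mu(X_v)=1$.
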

\begin{proof}
 With the previous statement, we have
 \begin{align*}
\mu(D)&=\mu(\bigcap_{n\in \N}\bigcup_{v\in V_n}X_v^D)=\lim_{N\to \infty} \mu(\bigcap_{n=1}^N\bigcup_{v\in V_n}X_v^D)
=\lim_{N\to \infty} \mu(\bigcup_{v\in V_N}X_v^D)
\\
&=\lim_{N\to \infty} \sum_{v\in V_N}\mu(X_v^D) =\lim_{n\to \infty} \sum_{v\in V_n}\frac{|E^D(v_0,v)|}{|E(v_0,v)|} \cdot\mu(X_v),
 \end{align*}
 where we used \eqref{eq: inv measure} in the last step.
The statement follows.
\end{proof}

Recall that a dynamical system $(X,f)$ is \emph{expansive} if for some (and hence, any) compatible metric $d$, there is $\delta>0$ such that for all $x\neq y\in X$ there is $n\in \Z$ with $d(f^nx,f^ny)>\delta$.
For a Bratteli-Vershik system $(X_B,\phi_B)$, expansivity
is equivalent to the existence of some $k\geq 0$ such that for each pair of paths $\gamma\neq \gamma'\in X_B$, there is $n\in \Z$ such that the $k$-heads of $\phi_B^n(\gamma)$ and $\phi_B^n(\gamma')$ disagree.
In this case, we also say that $(X_B,\phi_B)$ is \emph{$k$-expansive}.

\begin{thm}\label{thm: 2 to 1 extensions exist}
For each simple properly ordered Bratteli diagram $B$, telescoping yields a diagram $B'$ admitting an extension triple $(B',\B,\pi)$ where $\pi$ is
an irregular almost \aaa\ factor map.
If, additionally, $(X_B,\phi_B)$ is $k$-expansive, then $\B$ can be chosen such that also $(X_{\B},\phi_{\B})$ is $k$-expansive.
%
\end{thm}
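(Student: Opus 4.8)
The plan is to produce $\B$ from a telescoping $B'$ of $B$ by copy-pasting with exactly two copies of every vertex, and to meet the three demands—irregularity, proper orderedness, and (for the addendum) $k$-expansiveness—by using two different kinds of edges. Two of the four conditions in \aaa-ness come for free: the discussion preceding Theorem~\ref{thm Sugisaki} shows that any copy-pasting already yields an almost $1$-to-$1$ factor map (only the minimal path maps to the minimal path), and using at most two copies of each vertex forces $|\pi^{-1}\alpha|\le 2$ by Remark~\ref{ref: rank is bounded by number of copies}. So only irregularity, proper orderedness and expansiveness need work.

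\textbf{The construction and proper orderedness.} First I would telescope $B$ to a diagram $B'$ in which, at each level $n$: (a) every vertex of $V_n$ is joined to every vertex of $V_{n+1}$ (possible since $B$ is simple); (b) all minimal edges into level $n+1$ emanate from a single vertex $u_n^-$, all maximal ones from a single $u_n^+$ (possible since $B$ is properly ordered); and (c) the in-degrees grow so fast that $\sum_n 2/\min_\ell r_\ell(n)<\infty$. Writing $v^0_\ell(n),v^1_\ell(n)$ for the two copies and declaring copy $0$ the \emph{spine}, I route edges in two ways. For minimal and maximal edges I use \emph{collapsing} copies: both copies of a minimal (maximal) edge are given source $(u_n^-)^0$ (resp. $(u_n^+)^0$). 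Every other edge with source $v_{\ell'}(n)$ is \emph{covering}: its copy into $v^j_\ell(n+1)$ is given source $v^j_{\ell'}(n)$. This is a legitimate copy-pasting, covering edges preserve the copy index along a lift while collapsing edges force it to $0$; hence the unique minimal (maximal) path of $\B$ is the spine $v_0\to(u_1^-)^0\to(u_2^-)^0\to\cdots$ (resp. through the $(u_n^+)^0$), so $\B$ is properly ordered.

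\textbf{Irregularity.} With this routing a point $\alpha$ admits a lift whose level-$1$ vertex lies in $\mc V_1^+$ iff the full-copy set can be propagated upward, which (since a two-element super-vertex can only be fed by covering edges) happens exactly when $\alpha_n$ is covering for all $n\ge1$. Thus $D=\{\alpha:\alpha_n\text{ covering for all }n\ge1\}$, and $E^D(v_0,v)$ consists of the paths to $v$ all of whose edges above level $0$ are covering. Since each vertex has exactly two non-covering in-edges (one minimal, one maximal),
\[
\frac{|E^D(v_0,v)|}{|E(v_0,v)|}\ \ge\ \prod_{i=1}^{n-1}\Big(1-\tfrac{2}{\min_\ell r_\ell(i)}\Big)\ \ge\ c\ >\ 0
\]
for every $v\in V_n$, the product converging by (c). Corollary~\ref{cor: irregular extension} then gives $\mu(D)\ge c>0$ for every invariant measure $\mu$, so $\pi$ is irregular; in fact the set of fibres of cardinality $2$ (the $\alpha$ with only finitely many non-covering edges) has full measure.

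\textbf{The main obstacle: the $k$-expansive addendum.} The construction above need not be $k$-expansive, and repairing this is the crux. The trouble is that two lifts of a fibre of cardinality $2$ agree below the topmost collapsing edge they meet and differ only above it, so their disagreement can sit at an arbitrarily high level. Since the copy index of a path equals the range-copy index of its edges, a $k$-head of $\B$ exposes copy indices only up to level $k+1$, and the Vershik dynamics cannot bring a disagreement at level $>k+1$ into such a window; thus no fixed $k$ separates all such pairs, and the cardinality-two set strictly exceeds $\bigcup_m\phi_{\B}^m D$ topologically. The remedy I would pursue is to \emph{confine the copy ambiguity to a bounded window}: adapt the copy-pasting to the $k$-block coding witnessing $k$-expansiveness of $B$ so that the two elements of every non-trivial fibre already differ within the first $k$ levels—equivalently, so that $I=\bigcup_m\phi^m_{\B}D$ holds as sets (not merely up to measure). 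For such pairs the disagreement then surfaces in some iterate's $k$-head, while pairs with distinct $\pi$-images are separated by projecting through $\pi$ and invoking the $k$-expansiveness of $B$. Showing that this localisation can be arranged simultaneously with the unique-spine requirement of (b) and the positive-measure count above is where the real difficulty lies.
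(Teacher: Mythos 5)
Your construction of $\B$ follows the paper's blueprint (two copies of each vertex, extremal edges collapsing onto a distinguished copy, all other edges preserving the copy index, $D$ identified with the paths having no extremal edge above level $0$, and Corollary~\ref{cor: irregular extension} to get $\mu(D)>0$), but the routing of the edges breaks simplicity. In your $\B$, every edge whose range is a copy-$0$ vertex has a copy-$0$ source (covering edges preserve the index, and the collapsing minimal/maximal edges emanate from $(u_n^-)^0$ and $(u_n^+)^0$), whereas copy-$1$ vertices feed only into copy-$1$ vertices. Hence no path ever passes from a copy-$1$ vertex to a copy-$0$ vertex at a higher level, and the set of paths traversing only copy-$0$ vertices is a nonempty, proper, closed, $\phi_{\B}$-invariant subset of $X_{\B}$: $\B$ is not simple and $(X_{\B},\phi_{\B})$ is not minimal, contrary to the standing assumptions on extension triples and to what the theorem is needed for. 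The paper's proof avoids exactly this by adding a \emph{third} copy $v_0^2(n)$ of $v_0(n)$ as the sole source of the minimal edges and routing the intermediate edges into $v_0^2(n+1)$ alternately from copy $0$ and copy $1$ of their original sources; that alternation is what restores simplicity. (A smaller issue in the same part: the bound $|E^D(v_0,v)|/|E(v_0,v)|\geq\prod_{i}(1-2/\min_\ell r_\ell(i))$ is asserted but not justified---the sources of the two extremal edges into $v$ may carry a disproportionate share of the paths from $v_0$---and the paper instead controls the ratio $|E^{\textrm{ex}}(v_0,v)|/|E(v_0,v)|<1/2$ via an explicit recursion and induction along the telescoping.)

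The $k$-expansiveness addendum is left unproven, and the obstacle you describe is illusory. If $\beta\neq\beta'$ lift the same $\alpha$ and first differ at level $n_0$, they differ at \emph{every} level $\geq n_0$; the only reason they can agree at level $0$ is that some edge of $\alpha$ at a level in $\{1,\ldots,n_0-1\}$ is extremal. The Vershik dynamics lets one replace the $(n_0-1)$-head of $\alpha$ by any path in $E(v_0,s(\alpha_{n_0}))$ while leaving the tail untouched; choosing a head with no extremal edge above level $0$ puts $\phi_B^i(\alpha)$ into $D$, and since fibres have at most two elements, $\phi_{\B}^i(\beta)$ and $\phi_{\B}^i(\beta')$ are then the two preimages of a point of $D$ and therefore differ already at level $0$. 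Together with $k$-expansiveness of the base for pairs with distinct projections, this gives $k$-expansiveness of $(X_{\B},\phi_{\B})$ with no ``localisation of the copy ambiguity'' and no recoding: the addendum follows from the construction already in place once simplicity is repaired, which is precisely how the paper argues.
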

\begin{rem}\label{rem: entropy conjecture model sets}
 Before proving the above statement, let us briefly discuss some important consequence related to the entropy conjecture for irregular cut and project schemes due to Moody, see \cite{BaakeJagerLenz2016,JaegerLenzOertel2016PositiveEntropyModelSets,FuhrmannGlasnerJagerOertel2021} and references therein.
It is a classical fact that if $X$ is a Cantor set, then $(X,f)$ is expansive if and only if it is isomorphic to a subshift $(\Sigma,\sigma)$, where
$\Sigma\ssq \mc A^\Z$ is a closed (in the product topology) collection of bi-infinite sequences
over the finite alphabet $\mc A$ and $\s$ denotes the left-shift \cite{Hedlund1969}.
 Therefore, Theorem~\ref{thm: 2 to 1 extensions exist} gives that to each minimal subshift $(\Sigma,\sigma)$ we can construct a minimal subshift $(\hat\Sigma,\sigma)$ which is an irregular almost $1$-to-$1$ extension with fibres of cardinality not bigger than $2$ and, consequently,
 with topological entropy equal to that of $(\Sigma,\sigma)$, see \cite{bowen1971}.
 
If, additionally, $(\Sigma,\sigma)$ is 
an almost $1$-to-$1$ extension of a minimal rotation $(\T,\g)$,
then $(\hat \Sigma,\sigma)$ is an \emph{irregular} almost $1$-to-$1$ extension of $(\T,\g)$, see also Section~\ref{sec: aa subshifts}.

Now, each minimal rotation $(\T,\g)$ allows for a subshift which is a regular almost $1$-to-$1$ extension \cite[Corollary~3.13]{FuhrmannKwietniak2020} and hence, of entropy $0$ (for the variational principle).
Extending such a subshift
as in Theorem~\ref{thm: 2 to 1 extensions exist}, we see that each minimal rotation $(\T,\g)$ admits an
\emph{irregular} almost $1$-to-$1$ extension with vanishing entropy.
Altogether, this gives an alternative to the constructions in \cite{BaakeJagerLenz2016,FuhrmannGlasnerJagerOertel2021}
and, in fact, an extension of the respective results:
Essentially, it shows that 
the existence of an irregular model set of vanishing entropy does not impose a restriction on the internal space of a cut and project scheme with external space $\R$ or $\Z$.

As the technical details of this discussion are beyond the scope of the present article, we refer the interested reader to
\cite{Robinson2007,BaakeLenzMoody2007Characterization} for a background on model sets and the cut and project scheme.
\end{rem}

\begin{proof}[Proof of Theorem~\ref{thm: 2 to 1 extensions exist}]
Consider an ordered diagram $B=(V,E,\leq)$ as in the assumptions.
For $n\in \N$ and $v_\ell\in V_n$ (with $\ell\in\{0,\ldots,|V_n|-1\}$), set 
\begin{align*}
E^{\textrm{ex}}(v_0,v_\ell)\!=\{(e_0,\ldots,e_{n-1})\in E(v_0,v_\ell)\: e_i \text{ is extremal for some } 0<i<n\},
\end{align*}
where we call  an edge \emph{extremal} if it is minimal or maximal.
Obviously, $E^{\textrm{ex}}(v_0,v_\ell)=\emptyset$ if $v_\ell\in V_1$.
Further, note that for $n\geq 2$
\begin{align*}
 \frac{|E^{\textrm{ex}}(v_0,v_\ell)|}{|E(v_0,v_\ell)|}\!=\!\frac{|E(v_0,s(e_{\ell,0}))|+|E(v_0,s(e_{\ell,r_\ell-1}))|}{|E(v_0,v_\ell)|}+\frac{\sum_{i=1}^{r_\ell-2}|E^{\textrm{ex}}(v_0,s(e_{\ell,i}))|}{\sum_{i=0}^{r_\ell-1}|E(v_0,s(e_{\ell,i}))|}.
\end{align*}
Through telescoping and, if necessary, relabelling, we may assume without loss of generality that $B$ satisfies
\begin{enumerate}[(a)]
\item For each $n\geq 2$ and $v\in V_n$, $|{E^{\textrm{ex}}(v_0,v)}|/|{E(v_0,v)}|<1/2$ (this is always possible, as shown by a simple induction using the above equality);
\item Vertices from consecutive levels are connected by at least $3$ edges;
\item For each $n\geq 1$, $v_0(n)\in V_n$ is the unique source of the minimal edges.
\end{enumerate}

 Let $\hat B=(\hat V,\hat E,\hat\leq)$ be the properly ordered Bratteli diagram obtained by copy-pasting $B$, satisfying the following properties.
\begin{enumerate}[(i)]
 \item For each $n\geq1$, $\V_n$ contains two copies
 $v_\ell^0(n),v_\ell^1(n)$ of each vertex $v_\ell(n)\in V_n$.
 Only for $\ell=0$, there is an additional third copy $v_0^2(n)$ of $v_0(n)$ (which serves as the unique source of the minimal edges in $\E_n$---see the next item). 
 \item For each $n\geq 1$ and each $j$ and $\ell$, $s(\e^j_{\ell,0}(n))=v_0^2(n)$ and $s(\e^j_{\ell,r_\ell-1}(n))=v_{\ell_+}^0(n)$,
 where $\ell_+=\ell_+(n,\ell)$ is such that $v_{\ell_+}(n)=s(e_{\ell,r_\ell-1}(n))$ in $B$.
 \item 
 For each $n\geq 1$ and $m=1,\ldots,r_0(n)-2$, $s(\e^2_{0,m}(n))=v_{\ell}^j(n)$
 assuming that $s(\e_{0,m}(n))=v_\ell(n)$ (in $B$),
 where $j\in \{0,1\}$ is such that
 \[j=|\{k=0,\ldots,m-1\:s(e_{0,k}(n))=v_\ell(n)\}|\bmod 2.\]
 Note that due to (b) and the previous item, this ensures that $\B$ is simple.
 \item
 For each $n\geq 1$, $j\in\{0,1\}$, each $\ell$, and $m=1,\ldots,r_\ell(n)-2$,
 we have $s(\e^j_{\ell,m}(n))=v_{\ell'}^j(n)$ assuming that $s(\e_{\ell,m}(n))=v_{\ell'}(n)$ (in $B$).
  \setcounter{counter}{\value{enumi}}
\end{enumerate}
Let $(B,\B,\pi)$ be the associated extension triple with $D$ as in \eqref{eq: defn discontinuities}.
Note that the vertex $v_0^2(n)$ ($n\in \N$) is the source
of minimal edges only.
Accordingly, if $\beta\in X_{\B}$ traverses $v_0^2(n)$ on infinitely many levels $n$, then $\beta$ is the sole preimage
of $\pi(\beta)$.
With Remark~\ref{ref: rank is bounded by number of copies} and due to item (i), we hence obtain that fibres have no more than $2$ elements, that is, $|\pi^{-1}\alpha|\leq 2$ for all $\alpha\in X_B$.

Further, for $v\in V_n$ ($n\geq 2$),
$E^D(v_0,v)= E(v_0,v)\setminus E^{\textrm{ex}}(v_0,v)$ so that by item (a),
$|E^D(v_0,v)|/|E(v_0,v)|>1/2$.
Corollary~\ref{cor: irregular extension} hence implies $\mu(D)>0$ for every invariant measure $\mu$.
The first part of the statement follows.

%
Now, suppose $(X_B,\phi_B)$ is $k$-expansive (which is preserved under telescoping).
Then, given $\beta,\beta'\in X_{\B}$ with $\alpha=\pi(\beta)\neq \pi(\beta')=\alpha'$, there is
$i\in \Z$ with $(\phi_B^i(\alpha))_k\neq (\phi_B^i(\alpha'))_k$ and hence, $(\phi^i_{\B}(\beta))_k\neq (\phi^i_{\B}(\beta))_k$.
To show that $(X_{\B},\phi_{\B})$ is $k$-expansive, it thus suffices to prove that
given distinct $\beta,\beta'\in X_{\B}$ with $\pi(\beta)=\pi(\beta')=\alpha$, there is $i\in \Z$ with $(\phi_{\B}^i(\beta))_0\neq (\phi_{\B}^i(\beta'))_0$.
 To that end, pick $n_0$ such that 
$\beta_{n_0}\neq \beta_{n_0}'$ and hence, $\beta_{n}\neq \beta_{n}'$ for $n\geq n_0$---note that if $\pi(\beta)=\pi(\beta')$ and $\beta_{n}= \beta_{n}'$ for some $n$, then $\beta_{m}= \beta_{m}'$ for all $m\leq n$.
By our assumptions on $B$, for each $v\in V_n$ ($n\geq1$), there is a path in $E(v_0,v)\setminus E^{\textrm{ex}}(v_0,v)$.
Let $\gamma\in E(v_0,r(\alpha_{n_0}))$ be such a path.
Then there is $i\in \Z$ such that the $n_0$-head of $\phi_B^i(\alpha)$ coincides with $\gamma$ while $\alpha_n=(\phi_B^i(\alpha))_n$ for $n>n_0$.
Consequently, $\phi_{B}^i(\alpha)\in D$ and $(\phi_{\B}^i(\beta))_n\neq(\phi_{\B}^i(\beta'))_n$ for $n\geq 0$.
\end{proof}
\begin{rem}
A straightforward adaptation of the above proof demonstrates that, for each $n\in\N$, we can find a Cantor minimal system that is an (expansive) irregular \aac\ extension of the (expansive) system $(X,f)$.
\end{rem}

\section{Minimal size of fibres implies maximal Birkhoff spectrum}\label{sec: proof of the main theorem}
We now prove our first main result, Theorem~\ref{thm: main}.
As an intermediate step, we derive a finite-time analogue of the statement, Lemma~\ref{lem: finitary argument for interval}.

In general, it is non-trivial to identify the full preimage of a given path in $X_B$.
Therefore, the average $S^n$ from \eqref{eq: defn Sn} will often only allow us to obtain a lower bound on the frequencies $S_D^n$ that we are actually after.
However, the advantage of dealing with $S^n$ over dealing with $S_D^n$ is that
we can make sense of $S^n$ not only for infinite but also for finite paths.
Before making this precise, we introduce some terminology.

 In the following, given finitely many finite paths $\gamma^1,\gamma^2,\ldots,\gamma^n$ with $r(\gamma^i)=s(\gamma^{i+1})$ for $i=1,\ldots,n-1$,
 we denote their concatenation by $(\gamma^1,\gamma^2,\ldots,\gamma^n)$.
\begin{defn}\label{defn: minimal time to maximality}
 Consider an ordered Bratteli diagram $B=(V,E,\leq)$.
 Given $N>0$ and a path $\gamma \in E_{0,N}$, $T(\gamma)$
 denotes the maximal number of times we can iterate $\gamma$ unambiguously, that is, $T(\gamma)\in \Z_{\geq 0}$ is  such that $\phi^{T(\gamma)}(\gamma)$ is maximal.
 Further, given $\gamma \in E_{n,N}$  with $0<n< N$, we set
 $T(\gamma)=\min_{\gamma'} T(\gamma',\gamma)$, where the minimum is taken over all $\gamma'\in E(v_0,s(\gamma))$.
\end{defn}

\begin{rem}
 Obviously, for any $\gamma\in E_{n,N}$ with $0<n<N$, we have $T(\gamma)=T(\gamma',\gamma)$ where $\gamma'$ is the unique maximal path in $E(v_0,s(\gamma))$.
\end{rem} 
For a \emph{finite} path $\gamma$ in $\mc B$ starting at level $0$, we set 
\[S^n(\gamma)=1/n\cdot\sum_{i=0}^{n-1}\mathbf 1_{\mc V_1^+}\phi^i(\gamma)
 \quad \text{for all } n\in\{0,1,\ldots,T(\gamma)\}.
\]

In the following, we call a path $(e_0,e_1,\ldots)$ \emph{pre-maximal} if for each of its edges $e_i$, there is exactly one edge $f_i\in E$ with $f_i>e_i$.
Clearly, every path bigger than a pre-maximal path necessarily contains a maximal edge.

\begin{defn}
 Given an ordered Bratteli diagram $B=(V,E,\leq)$, consider $N>n>0$ and $\delta>0$.
 We say that \emph{$N$ exceeds $n$ on a scale $\delta$} if for every pre-maximal path $\Gamma\in E_{n,N}$ and every path $\gamma\in E_{0,n}$
 \begin{align*}
  T(\Gamma)>T(\gamma)/\delta.
 \end{align*}
\end{defn}
\begin{rem}
 It is obvious but important to note that for each level $n$ and each $\delta>0$,
there is some $N>n$ which exceeds $n$ on a scale  $\delta$.
\end{rem}

\begin{lem}\label{lem: consequence of exceeding levels}
Let $\mc B$ be the extended Bratteli diagram of an extension triple $(B,\B,\pi)$, where the maximal edges on each level in $\B$ have a unique source.
Consider $\gamma\in \mc E_{n,N}$, where $N>n>0$ and $N$ exceeds $n$ on a scale $\delta$.

Then, given any $\gamma',\gamma''\in \mc E(v_0,s(\gamma))$, we have
 \begin{align*}
  |S^{T(\gamma',\gamma)}(\gamma',\gamma)
  -S^{T(\gamma'',\gamma)}(\gamma'',\gamma)|\leq 2\delta.
 \end{align*} 
\end{lem}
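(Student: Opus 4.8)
The plan is to rewrite $S^{T(\gamma',\gamma)}(\gamma',\gamma)$ as a ratio of two integer counts and then control how that ratio depends on the head $\gamma'$ while the tail $\gamma$ stays fixed. First I would observe that iterating $\phi_{\mc B}$ on the finite path $(\gamma',\gamma)$ until it becomes maximal sweeps out precisely the order-interval of all $p\in \mc E(v_0,r(\gamma))$ with $(\gamma',\gamma)\le p$ and $p$ not maximal. Hence
\[
S^{T(\gamma',\gamma)}(\gamma',\gamma)=\frac{|\{p\: (\gamma',\gamma)\le p<\max,\ r(p_0)\in \mc V_1^+\}|}{T(\gamma',\gamma)}.
\]
Since the head occupies the least significant levels $0,\dots,n$ and the tail $\gamma$ is held fixed, I would compare every $\gamma'$ to one common reference: the \emph{maximal} head $\eta^+\in \mc E(v_0,s(\gamma))$, for which the Remark after Definition~\ref{defn: minimal time to maximality} gives $T(\eta^+,\gamma)=T(\gamma)$. (Throughout I assume $\gamma$ is not maximal, so that $T(\gamma)>0$ and all the averages are defined.)

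The core is then a symmetric-difference estimate. Because $\eta^+$ is the largest head, we always have $(\gamma',\gamma)\le (\eta^+,\gamma)$, so the two orbit segments are nested and their difference is exactly $\{(\eta,\gamma)\: \gamma'\le \eta<\eta^+\}$, a set of fewer than $|\mc E(v_0,s(\gamma))|$ paths, \emph{all} sharing the tail $\gamma$. Consequently the numerator and the denominator above each change by at most $|\mc E(v_0,s(\gamma))|$ when one passes from $(\gamma',\gamma)$ to $(\eta^+,\gamma)$. Writing the reference average as $a/b$ with $b=T(\gamma)$ and $a\le b$ (the number of $\mc V_1^+$-visits never exceeds the segment length), and $S^{T(\gamma',\gamma)}(\gamma',\gamma)=(a+\Delta_c)/(b+\Delta_t)$ with $0\le \Delta_c\le \Delta_t<|\mc E(v_0,s(\gamma))|$, a one-line computation bounds $|b\Delta_c-a\Delta_t|$ by $b\Delta_t$ and yields
\[
\bigl|S^{T(\gamma',\gamma)}(\gamma',\gamma)-S^{T(\gamma)}(\eta^+,\gamma)\bigr|\le \frac{|\mc E(v_0,s(\gamma))|}{T(\gamma)}.
\]
Applying the same bound to $\gamma''$ and using the triangle inequality produces the factor $2$ in the claimed estimate.

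It therefore remains to prove $|\mc E(v_0,s(\gamma))|/T(\gamma)\le \delta$, and this is where the exceeding hypothesis enters and where I expect the real difficulty to lie. I would first push everything down through the collapsing map: since the order on $\mc E$ is inherited from $B$ and, by (the proof of) Proposition~\ref{prop: full preimage}, the $n$-head of a path in $X_{\mc B}$ is determined by its range, the interval $\mc E(v_0,A)$ is order-isomorphic to $E(v_0,\pi(A))$. Thus both quantities may be computed in the base diagram $B$, as $T(\pi(\gamma))$ and $|E(v_0,\pi(s(\gamma)))|$, and writing $|E(v_0,\pi(s(\gamma)))|=T(\eta^-)+1$ for the minimal head $\eta^-$ reduces the goal to $T(\pi(\gamma))\ge (T(\eta^-)+1)/\delta$. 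The definition of \emph{$N$ exceeds $n$ on a scale $\delta$} supplies $T(\Gamma)>T(\eta^-)/\delta$ for every \emph{pre-maximal} tail $\Gamma\in E_{n,N}$, and the assumption that the maximal edges on each level in $\hat B$ have a unique source is what I would use to position $\pi(\gamma)$ relative to such a pre-maximal $\Gamma$ and transfer this lower bound to $T(\pi(\gamma))$. The main obstacle is exactly this transfer: bounding $T(\pi(\gamma))$ from below uniformly in $\gamma$, since a tail lying just below maximal has comparatively few full paths above it, and it is the scale-$\delta$ gap together with the unique-source normalisation that must guarantee the orbit is nonetheless long enough for the estimate to close.
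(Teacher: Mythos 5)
Your symmetric-difference computation is sound and is essentially a repackaging of the paper's argument: comparing each head to the maximal head $\eta^+$ (for which $T(\eta^+,\gamma)=T(\gamma)$) is the same head/tail decomposition the paper uses, and in the regime where it applies your algebra does close, since $\Delta_t\leq|\mc E(v_0,s(\gamma))|-1=T(\eta^-)$ and the exceeding hypothesis gives $T(\eta^-)<\delta\, T(\Gamma)\leq\delta\, T(\gamma)$ whenever $\gamma\leq\Gamma$ for the pre-maximal path $\Gamma$ with $r(\Gamma)=r(\gamma)$. (Your order-isomorphism $\mc E(v_0,A)\cong E(v_0,\pi(A))$ is also correct, by the uniqueness of heads over a given range from Proposition~\ref{prop: full preimage}.)

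The genuine gap is the case you flag as ``the main obstacle'' but resolve in the wrong direction. If $\gamma$ is \emph{bigger} than the pre-maximal path in $\mc E_{n,N}$ ending in $r(\gamma)$, then $T(\gamma)$ can be $0$ or $1$ while $|\mc E(v_0,s(\gamma))|$ is unbounded, so the inequality $|\mc E(v_0,s(\gamma))|/T(\gamma)\leq\delta$ that your whole reduction rests on is simply false; no amount of transfer through the collapsing map will make the orbit ``long enough.'' Your blanket assumption that $\gamma$ is not maximal does not exclude this regime. The correct resolution, and the only place the unique-source hypothesis enters, is that in this case every iterate $\phi^i(\gamma',\gamma)$ contains a maximal edge of $\mc B$ at some level in $\{n,\ldots,N-1\}$; since all maximal edges of $\B$ on a given level share a source, the source vertex of such an edge in $\mc B$ is a singleton, and hence the entire head down to level $1$ runs through singleton vertices, so $\mathbf 1_{\mc V_1^+}$ vanishes along both orbit segments and the difference is exactly $0$. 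Without this case distinction at the outset, your proof does not go through.
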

\begin{proof}
First of all, note that we may assume without loss of generality that $\gamma$ is not bigger than the pre-maximal path ending in $r(\gamma)$.
For otherwise, $\phi^i(\gamma',\gamma)$ and $\phi^i(\gamma'',\gamma)$ would contain maximal edges for each $i=0,\ldots,T(\gamma',\gamma)$ and $i=0,\ldots,T(\gamma'',\gamma)$, respectively, so that $S^{T(\gamma',\gamma)}(\gamma',\gamma)=S^{T(\gamma'',\gamma)}(\gamma'',\gamma)=0$; here, we use the assumption that maximal edges in $\B$ start in the same vertex.

Now, note that $\phi^{i+T(\gamma')}(\gamma',\gamma)=\phi^{i+T(\gamma'')}(\gamma'',\gamma)$ for $i\geq 0$.
Moreover, $T(\gamma',\gamma)=T(\gamma')+T(\gamma)$ and
$T(\gamma'',\gamma)=T(\gamma'')+T(\gamma)$.
Without loss of generality, we may assume in the following that $T(\gamma')> T(\gamma'')$.
Then,
 \begin{align*}
  &|S^{T(\gamma',\gamma)}(\gamma',\gamma)
  -S^{T(\gamma'',\gamma)}(\gamma'',\gamma)|\\
  &=
  \Big| 1/T(\gamma',\gamma)\cdot \!\!\sum_{i=0}^{T(\gamma',\gamma)-1}\mathbf 1_{\mc V_1^+}(\phi^i(\gamma',\gamma))-
  1/T(\gamma'',\gamma)\cdot\sum_{i=0}^{T(\gamma'',\gamma)-1}\mathbf 1_{\mc V_1^+}(\phi^i(\gamma'',\gamma))\Big|\\
  &=
  \Big| 1/T(\gamma',\gamma)\cdot \Big (\sum_{i=0}^{T(\gamma')-1}\mathbf 1_{\mc V_1^+}(\phi^i(\gamma',\gamma))+\sum_{i=0}^{T(\gamma)-1}\mathbf 1_{\mc V_1^+}(\phi^{i+T(\gamma')}(\gamma',\gamma))\Big)\\
  &\phantom{=}-
  1/T(\gamma'',\gamma)\cdot\Big (\sum_{i=0}^{T(\gamma'')-1}\mathbf 1_{\mc V_1^+}(\phi^i(\gamma'',\gamma))+\sum_{i=0}^{T(\gamma)-1}\mathbf 1_{\mc V_1^+}(\phi^{i+T(\gamma'')}(\gamma'',\gamma))\Big)\Big|\\
&\leq T(\gamma')/T(\gamma)+ 
\Big |  (1/T(\gamma',\gamma)-1/T(\gamma'',\gamma))\cdot 
\sum_{i=0}^{T(\gamma)-1}\mathbf 1_{\mc V_1^+}(\phi^{i+T(\gamma')}(\gamma',\gamma))
\Big|.
 \end{align*}
As $N$ exceeds $n$ on a scale $\delta$ and $\gamma$ is not bigger than the pre-maximal path, $T(\gamma')/T(\gamma)<\delta$
and $T(\gamma',\gamma)-T(\gamma'',\gamma)\leq \delta\cdot T(\gamma)$.
We conclude that
\[
|S^{T(\gamma',\gamma)}(\gamma',\gamma)
  -S^{T(\gamma'',\gamma)}(\gamma'',\gamma)|\leq \delta +
  \delta \cdot T(\gamma)/T(\gamma'',\gamma)\leq 2\delta.
  \qedhere
\]
\end{proof}

\begin{defn}
 A vertex $v$ in the extended Bratteli diagram
 $\mc B$ of an extension triple $(B, \B,\pi)$ can \emph{realise} a frequency $\nu\geq0$
 if there is a finite path $\gamma\in \mc E(v_0,v)$
 such that $S^{T(\gamma)}(\gamma)=\nu$.
Given $\nu_0\geq 0$ and $n\in \N$, let $\mc V_n^+(\nu_0)$ be the collection of vertices in $\mc V_n$ that can realise some $\nu\geq \nu_0$ and let $\mc V_n^-(\nu_0)=\mc V_n\setminus \mc V_n^+(\nu_0)$.
\end{defn}

The following statement is a finitary precursor to Theorem~\ref{thm: main} and an important step towards its proof.

\begin{lem}\label{lem: finitary argument for interval}
Let $\mc B$ be the extended Bratteli diagram of an extension triple $(B,\B,\pi)$, where the maximal edges on each level in $\B$ have a unique source.
 Suppose there is some $\alpha\in X_{B}$ with
 $\lim_{n\to\infty} S_D^n(\alpha)=\nu_0>0$ and consider $\nu\in(0,\nu_0)$. 
 Let $(\delta_i)_{i\geq 2}$ be a null sequence in $(0,\nu/2)$ and suppose that $1<n_1<n_2<\ldots <n_k$ are levels such that $n_{i+1}$ exceeds $n_i$ on a scale $\delta_{i+1}$ for all $i=1,\ldots,k-1$.
 
 Then, $\mc V_{n_k}^+(\nu)\neq \emptyset$ and for any $v\in \mc V_{n_k}^+(\nu)$ there is a finite path $(\gamma^1,\ldots,\gamma^k)$ in $\mc E(v_0,v)$ with $\gamma^1\in \mc E_{0,n_1}$ and $\gamma^{i}\in \mc E_{n_{i-1},n_{i}}$ for $i=2,\ldots,k$
 such that for $i=1,\ldots,k$,
 \begin{align}
  S^{T(\gamma^1,\ldots,\gamma^i)}(\gamma^1,\ldots,\gamma^k)\,
  \begin{cases}\label{eq: finitary version of realising path}
   \geq \nu-2\delta_i & \text{ if } i \text{ is odd},\\
   \leq \nu+2\delta_i & \text{ if } i \text{ is even}.
  \end{cases}
 \end{align}
\end{lem}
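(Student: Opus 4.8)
The plan is to prove the statement by a forward induction on $i$, building the path segment by segment so that the prefixes $(\gamma^1,\ldots,\gamma^i)$ realise averages oscillating around $\nu$. The first observation, which disentangles the conditions in \eqref{eq: finitary version of realising path}, is that the tail of a path does not influence its early iterates: for $j< T(\gamma^1,\ldots,\gamma^i)$ the iterate $\phi^j_{\mc B}(\gamma^1,\ldots,\gamma^k)$ only alters the prefix up to level $n_i$, and membership in $\mc V_1^+$ depends solely on the range of the first edge. Hence $S^{T(\gamma^1,\ldots,\gamma^i)}(\gamma^1,\ldots,\gamma^k)=S^{T(\gamma^1,\ldots,\gamma^i)}(\gamma^1,\ldots,\gamma^i)$, so it suffices to control, for each $i$, the full-time average $S^{T(\gamma^1,\ldots,\gamma^i)}(\gamma^1,\ldots,\gamma^i)$ of the $i$-th prefix over its own tower.

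I would next assemble three ingredients. First, realising vertices are abundant: since $D$ is closed, any weak-$*$ limit $\mu$ of the empirical measures along $\alpha$ satisfies $\mu(D)\geq \lim_n S^n_D(\alpha)=\nu_0>\nu$; combined with Corollary~\ref{cor: irregular extension} (whose proof shows $\sum_{w\in V_N}\tfrac{|E^D(v_0,w)|}{|E(v_0,w)|}\,\mu(X_w)\geq \mu(D)$ at every level $N$) this forces some $w\in V_N$ with $\tfrac{|E^D(v_0,w)|}{|E(v_0,w)|}>\nu$, and the minimal path of the full vertex over such $w$ then realises a frequency $>\nu$; in particular $\mc V_{n_1}^+(\nu)\neq\emptyset$. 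Second, Lemma~\ref{lem: consequence of exceeding levels} makes the full-time average of a segment essentially independent of what precedes it: as $n_{i+1}$ exceeds $n_i$ on scale $\delta_{i+1}$, appending a fixed segment $\gamma^{i+1}$ yields $|S^{T(P,\gamma^{i+1})}(P,\gamma^{i+1})-S^{T(P',\gamma^{i+1})}(P',\gamma^{i+1})|\leq 2\delta_{i+1}$ for any two prefixes $P,P'$. Third, as isolated in the proof of Lemma~\ref{lem: consequence of exceeding levels}, the maximal tail into any vertex consists entirely of maximal edges whose unique common source forces its iterates out of $\mc V_1^+$, so such a tail has full-time average $0$.

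The inductive step then alternates two moves. On an \emph{odd} step I would append a segment of value exceeding $\nu$ — obtained either by routing to a full vertex over a high-density $w$ as above, or by transplanting a long stretch of the full preimage $\gamma$ of $\alpha$, whose heads satisfy $S^{T(\gamma|_N)}(\gamma|_N)=S_D^{T(\gamma|_N)}(\alpha)\to\nu_0$ by Proposition~\ref{prop: full preimage}; by the second ingredient the cumulative full-time average then stays $\geq \nu-2\delta_{i+1}$, regardless of the prefix built so far. On an \emph{even} step I would append a maximal tail, whose value $0$ drags the cumulative average down to $\leq 2\delta_{i+1}\leq \nu+2\delta_{i+1}$. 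The prescribed terminal vertex $v\in\mc V^+_{n_k}(\nu)$ is accommodated by reserving the last segment: a path $\delta\in\mc E(v_0,v)$ witnessing $S^{T(\delta)}(\delta)\geq\nu$ provides, via its top segment and Lemma~\ref{lem: consequence of exceeding levels}, the final segment realising the $k$-th bound.

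The hard part will be the combinatorial bookkeeping at the junctions. The high-value (odd) and value-$0$ (even) segments naturally originate from different vertices — the low-order, respectively $\alpha$-vertices on the one hand and the unique maximal source on the other — so the construction must connect them while landing, at level $n_k$, exactly on the prescribed $v$. The point is that the connecting sub-paths can be kept short relative to the levels $n_{i+1}$ exceeding $n_i$ on a small scale, so that Proposition~\ref{prop: full preimage} and, above all, the prefix-independence of Lemma~\ref{lem: consequence of exceeding levels} wash out their contribution; the unique-source hypothesis on the maximal edges is what keeps the even-step segments at value exactly $0$ throughout the relevant iterates. Verifying that these estimates survive the concatenation, with the error at step $i$ controlled by $2\delta_i$, is the technical heart of the argument.
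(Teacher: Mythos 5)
Your proposal assembles the right ingredients (the prefix--independence from Lemma~\ref{lem: consequence of exceeding levels}, the full preimage of $\alpha$ as the source of high-value segments, maximal edges as the source of low-value stretches), but the inductive step is genuinely broken at the even stages. You propose to ``append a maximal tail'' $\gamma^{i+1}\in\mc E_{n_i,n_{i+1}}$ after an odd step that achieved $S^{T(\gamma^1,\ldots,\gamma^i)}(\gamma^1,\ldots,\gamma^i)\geq\nu-2\delta_i>0$. But in $\mc B$ a maximal edge $e^{\{j_1,\ldots,j_t\}}_{\ell,r_\ell-1}$ has source $s(\{e^{j_1}_{\ell,r_\ell-1},\ldots\})=\{v^0_{\ell_+}\}$, a \emph{singleton} vertex (this is exactly the unique-source hypothesis), and the source of any edge with singleton range is again a singleton; hence any path ending in a singleton vertex traverses only singleton vertices and never enters $\mc V_1^+$. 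Consequently a fully maximal $\gamma^{i+1}$ must start at a singleton, so $r(\gamma^i)$ would be a singleton, so $(\gamma^1,\ldots,\gamma^i)$ would have average exactly $0$ --- contradicting the odd step you just completed. The same argument rules out any $\gamma^{i+1}$ containing even one maximal edge. The even-step segment cannot have value $0$; the most one can arrange is value $<\nu$, and the paper achieves this by taking $\gamma^{i+1}$ to be maximal \emph{among the paths that start in $\mc V^+_{n_i}(\nu)$ and end at the prescribed source of the segment above}, so that all later iterates of its level-range pass through $\mc V^-_{n_i}(\nu)$.

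This points to the second, related gap: the forward induction does not resolve the junction problem you flag at the end. Since $\mc B$ is not simple, after $k-1$ forward steps there is no guarantee of any path from your current vertex to the prescribed $v\in\mc V^+_{n_k}(\nu)$, let alone one with controlled average; and each even segment must end precisely at the source of the next odd segment, which you cannot know before choosing that odd segment. The paper's proof is therefore run \emph{backwards}: it first extracts $\gamma^k$ as the top segment of a witnessing path into $v$, chosen so that $s(\gamma^k)\in\mc V^+_{n_{k-1}}(\nu)$ and the corresponding level-$n_{k-2}$ vertex lies in $\mc V^+_{n_{k-2}}(\nu)$; then $\gamma^{k-1}$ is the maximal path from $\mc V^+_{n_{k-2}}(\nu)$ into $s(\gamma^k)$ (which exists because the witnessing path provides one such path); and Lemma~\ref{lem: consequence of exceeding levels} makes each bound independent of everything chosen below, so the descent closes. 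Your non-emptiness argument via the ``full vertex over $w$'' also has a soft spot (that set of copies need not survive the pruning of $\mc B$), but your alternative via the full preimage of $\alpha$ and Proposition~\ref{prop: full preimage} is exactly the paper's argument and is fine.
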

\begin{proof}
First, we discuss $\mc V_{n_k}^+(\nu)\neq \emptyset$ or actually, $\mc V_n^+(\nu)\neq \emptyset$ for $n>0$.
To that end, observe that it suffices to show $\mc V_n^+(\nu)\neq \emptyset$ for arbitrarily large $n$.
Now, let $\gamma\in X_{\mc B}$ be such that $\pi(\gamma)=\alpha$ and $S^m(\gamma)=S_D^m(\alpha)$ for all $m\in \N$, see Proposition~\ref{prop: full preimage}.
Then, for sufficiently large levels, the $\phi_{\mc B}$-orbit of $\gamma$ has to pass through a
vertex which can realise a frequency bigger or equal to $\nu$, since otherwise, $S_D^m(\alpha)=S^m(\gamma)<\nu< \nu_0$ for arbitrarily large $m$ in contradiction to the assumptions on $\alpha$.
Hence, $\mc V_n^+(\nu)\neq \emptyset$ for each $n$.
Note also that $\mc V_n^-(\nu)\neq \emptyset$ ($n\in \N$) since singleton vertices can only realise $0$.

Turning to the construction of the paths $\gamma^i$,
we may assume without loss of generality that 
$k$ is odd---the even case can be reduced to the odd one by considering an additional level $n_{k+1}$ which exceeds $n_k$ on a scale $\delta_{k+1}$.
We may further assume that $k\geq3$ (the case $k=1$ is trivial).

We start by constructing $\gamma^k$.
With $v\in \mc V_{n_k}^+(\nu)$ as in the assumptions, pick any path $\tilde \gamma^k\in \mc E(v_0,v)$ with 
$S^{T(\tilde \gamma^k)}(\tilde \gamma^k)\geq \nu$.
Note that there must be some $i\in \{0,\ldots,T(\tilde\gamma^k)-1\}$ such that
$s\big((\phi_{\mc B}^i(\tilde \gamma^k))_{n_{k-1}}\big)\in \mc V_{n_{k-1}}^+(\nu)$ and $s\big((\phi_{\mc B}^i(\tilde \gamma^k))_{n_{k-2}}\big)\in \mc V_{n_{k-2}}^+(\nu)$---otherwise, we had $S^{T(\tilde \gamma^k)}(\tilde \gamma^k)< \nu$.
By possibly iterating forwards by the smallest such $i$, we may assume without loss of generality that $i=0$, that is,
$s(\tilde \gamma^k_{n_{k-1}})\in \mc V_{n_{k-1}}^+(\nu)$ and
$s(\tilde \gamma^k_{n_{k-2}})\in \mc V_{n_{k-2}}^+(\nu)$
and $S^{T(\tilde \gamma^k)}(\tilde \gamma^k)\geq \nu$.
We set $\gamma^k=(\tilde \gamma^k_{n_{k-1}},\ldots,\tilde \gamma^k_{n_{k}-1})$.
Lemma~\ref{lem: consequence of exceeding levels} gives
\begin{align}\label{eq: finitary lemma--approximate nu from above}
S^{T(\gamma',\gamma^k)}(\gamma',\gamma^k)\geq \nu-2\delta_k
\end{align}
for any $\gamma'\in \mc E(v_0,s(\gamma^k))$.

Towards the definition of $\gamma^{k-1}$, recall that every path $\hat \Gamma \in \mc E_{n_{k-2},n_{k-1}}$ with $r(\hat \Gamma)=s(\gamma^k)$ that is bigger than the pre-maximal path 
(with the same range) contains a maximal edge, and hence traverses singleton vertices.
Recall further that $s((\tilde \gamma^k)_{n_{k-2}})\in\mc V_{n_{k-2}}^+(\nu)$ and $r((\tilde \gamma^k)_{n_{k-1}-1})=s(\gamma^k)$ (by definition of $\gamma^k$).
In other words, there are paths in $\mc E_{n_{k-2},n_{k-1}}$ that end in $s(\gamma^k)$ and start in 
$\mc V_{n_{k-2}}^-(\nu)$ (e.g.\ paths like $\hat\Gamma$ from above)
just as there are paths that end in $s(\gamma^k)$ and start in 
$\mc V_{n_{k-2}}^+(\nu)$ (e.g.\ $(\tilde \gamma^k_{n_{k-2}},\ldots,\tilde \gamma^k_{n_{k-1}-1})$).
We define $\gamma^{k-1}$ to be the path that is maximal among the elements of $\mc E_{n_{k-2},n_{k-1}}$ that end in $s(\gamma^k)$ and start in $\mc V_{n_{k-2}}^+(\nu)$.
Then, if $\Gamma\in \mc E(v_0,s(\gamma^{k-1}))$ is maximal, we have 
$S^{T(\Gamma,\gamma^{k-1})}(\Gamma,\gamma^{k-1})<\nu$ since 
$\phi_{\mc B}^i(\Gamma,\gamma^{k-1})$ lies in $\hat V_1$ (viewed as a subset of $\mc V_1$) for $i=0$ and traverses vertices in $\mc V_{n_{k-2}}^-(\nu)$ for $i>0$.
Lemma~\ref{lem: consequence of exceeding levels} gives
\begin{align}\label{eq: finitary lemma--approximate nu from below}
S^{T(\gamma',\gamma^{k-1})}(\gamma',\gamma^{k-1})\leq \nu+2\delta_{k-1}
\end{align}
 for any $\gamma'\in \mc E(v_0,s(\gamma^{k-1}))$.
 
 Now, observe that \eqref{eq: finitary lemma--approximate nu from above} and \eqref{eq: finitary lemma--approximate nu from below} already give \eqref{eq: finitary version of realising path} for $i=k,k-1$ independently of the particular choice for $\gamma^1,\ldots,\gamma^{k-2}$.
 We can hence repeat the above steps to construct $\gamma^{k-2}$ (similarly to how we constructed $\gamma^k$, this time with $v=s(\gamma^{k-1})$) and $\gamma^{k-3}$
 (similarly to how we constructed $\gamma^{k-1}$)
 such that \eqref{eq: finitary version of realising path} is also satisfied for $i=k-3,k-2$. 
 Repeating this procedure finitely many times gives the statement.
\end{proof}

To show Theorem~\ref{thm: main}, we will need the following general, basic fact, whose proof we provide for the convenience of the reader.
\begin{prop}\label{prop: limsup is assumed as limit}
 Let $(X,f)$ be a topological dynamical system and let $\mc M$ be the collection of its invariant measures.
 Suppose $D\ssq X$ is closed.
 
 Then, for any $x\in X$, we have
 \[
\limsup_{n\to\infty} 1/n\cdot \sum_{i=0}^{n-1}\mathbf 1_D(f^i(x))\leq  \sup_{\mu \in \mc M} \mu(D).
 \]
 Moreover, there is $x\in X$ with 
 $\lim_{n\to\infty} 1/n\cdot \sum_{i=0}^{n-1}\mathbf 1_D(f^i(x))=  \sup_{\mu \in \mc M} \mu(D)$.
\end{prop}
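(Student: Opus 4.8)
The plan is to relate the Birkhoff averages $1/n\cdot\sum_{i=0}^{n-1}\mathbf 1_D(f^i(x))$ to integrals against invariant measures via empirical measures and weak-$*$ convergence. For the first (inequality) part, fix $x\in X$ and consider the sequence of empirical measures $\mu_n=1/n\cdot\sum_{i=0}^{n-1}\delta_{f^i(x)}$ in the space $\mc P(X)$ of Borel probability measures, whose weak-$*$ topology is compact and metrisable since $X$ is compact metrisable. First I would pass to a subsequence $(n_k)$ along which $\mu_{n_k}(D)$ converges to $\limsup_{n}\mu_n(D)=\limsup_n 1/n\cdot\sum_{i=0}^{n-1}\mathbf 1_D(f^i(x))$, and then to a further subsequence along which $\mu_{n_k}$ converges weak-$*$ to some $\mu\in\mc P(X)$. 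A standard Krylov--Bogolyubov argument shows $\mu$ is $f$-invariant, i.e.\ $\mu\in\mc M$. Since $D$ is closed, the portmanteau theorem gives $\limsup_k\mu_{n_k}(D)\leq \mu(D)$; combined with the choice of $(n_k)$, this yields $\limsup_n 1/n\cdot\sum_{i=0}^{n-1}\mathbf 1_D(f^i(x))\leq\mu(D)\leq\sup_{\nu\in\mc M}\nu(D)$, as required.

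For the second (attainment) part, I would first note that the map $\mu\mapsto\mu(D)$ is upper semicontinuous on the weak-$*$ compact set $\mc M$ (again because $D$ is closed), so the supremum $s=\sup_{\mu\in\mc M}\mu(D)$ is attained by some $\mu^*\in\mc M$. Applying the ergodic decomposition to $\mu^*$, we may write $\mu^*(D)=\int\mu(D)\,d\tau(\mu)$ as an average over ergodic measures; since $\mu(D)\leq s$ for every invariant $\mu$ while the average equals $s$, we must have $\mu(D)=s$ for $\tau$-almost every ergodic $\mu$, so in particular there is an ergodic measure $\mu_e$ with $\mu_e(D)=s$. Birkhoff's ergodic theorem then provides $\mu_e$-almost every $x$ (and hence at least one such $x$) with $\lim_{n\to\infty}1/n\cdot\sum_{i=0}^{n-1}\mathbf 1_D(f^i(x))=\int\mathbf 1_D\,d\mu_e=\mu_e(D)=s$.

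The routine points here are the weak-$*$ compactness and the invariance of limiting empirical measures; the only genuine care needed is (i) applying the portmanteau inequality in the correct direction for the \emph{closed} set $D$ (so that the average is bounded \emph{above}, not below), and (ii) upgrading the maximising invariant measure to an \emph{ergodic} one via the ergodic decomposition, which is what makes Birkhoff's theorem produce an honest limit equal to $s$ rather than merely a value equal to $s$ on average. I expect the extraction of the ergodic maximiser to be the main conceptual step, though it is short.
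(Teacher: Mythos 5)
Your proof is correct, and the first part (empirical measures, Krylov--Bogolyubov, portmanteau for the closed set $D$) is essentially identical to the paper's argument, as is the extraction of a maximising invariant measure $\mu^*$ via weak-$*$ compactness and upper semicontinuity of $\mu\mapsto\mu(D)$. The one place you diverge is the final step of the attainment part: you invoke the ergodic decomposition of $\mu^*$ to produce an \emph{ergodic} maximiser $\mu_e$ and then apply Birkhoff's theorem in its ergodic form, whereas the paper stays with the (possibly non-ergodic) $\mu^*$ itself, notes that the Birkhoff limit function $S_D$ exists $\mu^*$-a.s.\ with $\int S_D\,d\mu^*=\mu^*(D)$, and concludes that \emph{some} point must satisfy $S_D(x)\geq\mu^*(D)$; the reverse inequality then comes for free from the first part of the proposition. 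Your route buys an honest pointwise limit equal to $s$ directly from ergodicity; the paper's route avoids the ergodic decomposition entirely at the cost of having to combine the a.e.\ statement with the already-established upper bound. Both are complete; the paper's is marginally lighter on machinery, yours is marginally more self-contained in its last line.
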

\begin{proof}
Suppose for a contradiction that there is $x\in X$ and a strictly increasing sequence $(n_\ell)$ in $\N$ with
$\lim_{\ell\to\infty} 1/n_\ell\cdot \sum_{i=0}^{n_\ell-1}\mathbf 1_D(f^i(x))> \sup_{\mu \in \mc M} \mu(D)$.
By a standard Krylov-Bogolubov argument (and by possibly going over to a subsequence), we may assume without loss of generality that
$\lim_{\ell\to\infty} 1/n_\ell\cdot \sum_{i=0}^{n_\ell-1}\mathbf \delta_{f^i(x)}$ converges in the weak-*topology to some $\nu \in \mc M$.

Now, the Portmanteau Theorem gives 
$\limsup_{\ell\to\infty} 1/n_\ell\cdot \sum_{i=0}^{n_\ell-1} \mathbf 1_D(f^i(x))\leq \nu(D)\leq \sup_{\mu \in \mc M} \mu(D)$ in contradiction to the assumptions on $x$ and $(n_\ell)$.
This proves the first part.

For the ``moreover''-part, let $(\mu_n)$ be a sequence in $\mc M$ such that $\mu_n(D)$ converges to $\sup_{\mu \in \mc M} \mu(D)$.
By weak-*compactness of $\mc M$, we may assume without loss of generality that 
$(\mu_n)$ converges to some $\mu \in \mc M$.
By the Portmanteau Theorem, $\mu(D)\geq \lim_{n\to \infty}\mu_n(D)$ and thus,
$\mu(D)=\sup_{\mu \in \mc M} \mu(D)$.
Now, ${S}_D(x)=\lim_{n\to\infty} 1/n\cdot \sum_{i=0}^{n-1}\mathbf 1_D(f^i(x))$ exists $\mu$-a.s.\ and
$\int\! {S}_D\, d\mu=\mu(D)$ by Birkhoff's Ergodic Theorem.
Hence, there must be $x\in X$ with
$\lim_{n\to\infty} 1/n\cdot \sum_{i=0}^{n-1}\mathbf 1_D(f^i(x))= {S}_D(x)\geq\mu(D)=\sup_{\mu \in \mc M} \mu(D)$ which, together with the first part, proves the statement.
 \end{proof}

 Recall the definition of the Birkhoff spectrum $\SD$ associated to an extension triple $(B,\B,\pi)$ in \eqref{eq: defn birkhoff spectrum}.
 Proposition~\ref{prop: limsup is assumed as limit} implies $\SD\ssq [0,\sup_{\mu\in \mc M}\mu(D)]$
 but also gives $\alpha\in X_B$ with
 $\lim_{n\to\infty} S^n_D(\alpha)= \sup_{\mu\in \mc M}\mu(D)$.
Moreover, also $0\in \SD$ since $D$ is nowhere dense: $\bigcup_{i\in \Z} \phi_B^i(D)$ is meagre and paths in the non-empty complement of $\bigcup_{i\in \Z} \phi_B^i(D)$ never visit $D$.
 Hence, we always have $\{0,\sup_{\mu\in \mc M}\mu(D)\}\ssq \SD\ssq [0,\sup_{\mu\in \mc M}\mu(D)]$.
 
The next statement, together with Theorem~\ref{thm: 2 to 1 extensions exist}, shows that every ordered Bratteli diagram
$B$ admits (up to telescoping) an extension triple $(B,\B,\pi)$ whose Birkhoff spectrum is non-trivially \emph{maximal}: $\{0\}\subsetneq \SD=[0,\sup_{\mu\in \mc M}\mu(D)]$.
\begin{thm}\label{thm: main}
   Let $(B,\B,\pi)$ be an extension triple where $\pi$ is at most $2$-to-$1$.
   Then, $\SD=[0,\sup_{\mu\in \mc M}\mu(D)]$ where $\mc M$ is the collection of
all invariant measures of $(X_B,\phi_B)$.
  \end{thm}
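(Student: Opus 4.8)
The plan is to prove that $\SD=[0,\sup_{\mu\in\mc M}\mu(D)]$ by showing that both endpoints lie in $\SD$ (already established in the discussion preceding the theorem) and that every interior point $\nu\in(0,\nu_0)$ is realised, where $\nu_0=\sup_{\mu\in\mc M}\mu(D)$. Since $\pi$ is at most $2$-to-$1$, Remark~\ref{rem: full preimages in finite-to-1 extensions} tells us that any path in $X_{\mc B}$ traversing a two-element vertex is already the full preimage of its projection, so $S^n=S^n_D\circ\pi$ along such paths. This is the structural fact that lets us pass freely between the frequency $S^n$ (defined even for finite paths in $\mc B$) and the genuine visiting frequency $S^n_D$. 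First I would fix $\nu\in(0,\nu_0)$, choose a null sequence $(\delta_i)_{i\geq 2}$ in $(0,\nu/2)$, and inductively select levels $1<n_1<n_2<\cdots$ so that $n_{i+1}$ exceeds $n_i$ on scale $\delta_{i+1}$, which is always possible by the remark following the definition of ``exceeds on a scale.''

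The heart of the argument is to iterate Lemma~\ref{lem: finitary argument for interval} to build an \emph{infinite} path rather than merely a finite one. The finitary lemma produces, for each odd $k$, a concatenation $(\gamma^1,\ldots,\gamma^k)$ ending in some $v\in\mc V_{n_k}^+(\nu)$ and satisfying the alternating bounds \eqref{eq: finitary version of realising path}. I would like these finite paths to be nested, i.e. to form the initial segments of a single infinite path $\gamma\in X_{\mc B}$. The key point is that the construction in the lemma is carried out \emph{backwards} from level $n_k$, so the segments $\gamma^{k},\gamma^{k-1},\ldots$ depend on $k$; to obtain a coherent limit I would instead run the construction to produce a sequence of finite paths and extract a convergent subsequence in $X_{\mc B}$ (the space of infinite paths is compact), or more cleanly, argue that the choices at low levels stabilise. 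The cleanest route is compactness: for each odd $k$, the lemma gives a finite path $\Gamma^{(k)}=(\gamma^1,\ldots,\gamma^k)\in\mc E(v_0,v^{(k)})$ whose partial frequencies obey the alternating estimates; extending each $\Gamma^{(k)}$ arbitrarily to an infinite path in $X_{\mc B}$ and passing to a convergent subsequence yields a limit path $\gamma$.

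It then remains to verify that $\gamma$ actually realises the frequency $\nu$, i.e. that $\nu\in S_D(\pi(\gamma))$. Here I would use that along the infinitely many ``checkpoint'' levels $T(\gamma^1,\ldots,\gamma^i)$, the averages $S^{T(\gamma^1,\ldots,\gamma^i)}(\gamma)$ alternate around $\nu$ within $2\delta_i\to 0$, so $\liminf$ and $\limsup$ of the subsequence of averages at the times $T(\gamma^1,\ldots,\gamma^i)$ both equal $\nu$. Because $\delta_i\to 0$, the odd-indexed checkpoints give $S\geq\nu-2\delta_i$ and the even-indexed ones give $S\leq\nu+2\delta_i$, pinning the accumulation point at $\nu$. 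Finally, since $\gamma$ traverses vertices in $\mc V^+(\nu)$ (which are two-element vertices), Proposition~\ref{prop: full preimage} together with the $2$-to-$1$ hypothesis gives $S^n(\gamma)=S^n_D(\pi(\gamma))$ at the relevant times, so $\nu\in S_D(\pi(\gamma))\subseteq\SD$.

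The main obstacle I anticipate is the coherence of the infinite path: Lemma~\ref{lem: finitary argument for interval} constructs its segments from the top level $n_k$ downward, so the low-level segments $\gamma^1,\gamma^2$ produced for parameter $k$ need not agree with those produced for $k+2$. Resolving this—either by a compactness/diagonal extraction in $X_{\mc B}$ or by reorganising the induction so that the prefix is fixed once and for all and only the suffix is extended—is the delicate step. A secondary subtlety is controlling the frequency \emph{between} consecutive checkpoints $T(\gamma^1,\ldots,\gamma^i)$ and $T(\gamma^1,\ldots,\gamma^{i+1})$: one must ensure that the averages do not stray far from $\nu$ at intermediate times $n$, which is exactly what the ``exceeds on a scale $\delta$'' condition controls via Lemma~\ref{lem: consequence of exceeding levels}, but assembling these local estimates into a clean statement that $\nu$ is the unique accumulation point of $(S^n_D(\pi(\gamma)))_n$ requires care.
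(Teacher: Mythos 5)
Your overall strategy is the paper's: fix $\nu\in(0,\sup_\mu\mu(D))$, choose the scales $\delta_i$ and the exceeding levels $n_i$, invoke Lemma~\ref{lem: finitary argument for interval} to produce, for each $k$, a finite path satisfying the alternating bounds \eqref{eq: finitary version of realising path}, pass to an infinite path by a compactness argument, and then use Remark~\ref{rem: full preimages in finite-to-1 extensions} together with Proposition~\ref{prop: full preimage} to convert $S^n$ into $S^n_D\circ\pi$. The coherence issue you single out as the main obstacle is resolved in the paper exactly as you suggest: the set $\Gamma(k)$ of admissible finite paths is finite and the $n_k$-head of any element of $\Gamma(K)$, $K>k$, lies in $\Gamma(k)$ (the condition for index $i$ only depends on the $n_i$-head), so the inverse limit $\Gamma(\infty)$ is non-empty by K\H{o}nig's lemma; your subsequential-limit extraction is the same argument in different clothing. (A minor normalisation you omit: one first telescopes so that $n_1=1$ and the maximal edges of $\hat B$ on each level have a unique source, which is what Lemmas~\ref{lem: consequence of exceeding levels} and \ref{lem: finitary argument for interval} require.)

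The one place where your argument is not correct as written is the final deduction that $\nu$ is an accumulation point. The bounds \eqref{eq: finitary version of realising path} are one-sided: odd checkpoints give $S\geq\nu-2\delta_i$ and even ones give $S\leq\nu+2\delta_i$, so the checkpoint subsequence need \emph{not} have $\liminf$ and $\limsup$ both equal to $\nu$ (it could oscillate between, say, $0$ and $\sup_\mu\mu(D)$ while respecting these inequalities). What follows is only $\limsup_n S^n_D(\beta)\geq\nu$ and $\liminf_n S^n_D(\beta)\leq\nu$. The missing step is not supplied by Lemma~\ref{lem: consequence of exceeding levels}, as you suggest, but by the elementary observation that $|S^n_D(\beta)-S^{n+1}_D(\beta)|\leq 1/n$: between a time where the average is $\leq\nu+\eps$ and a later time where it is $\geq\nu-\eps$, the sequence of averages is $\eps$-dense in the interval spanned by its extremes and therefore meets $[\nu-\eps,\nu+\eps]$; since this happens beyond every $N$ and for every $\eps>0$, $\nu\in S_D(\beta)$. (Note also that $\nu$ need not be the \emph{unique} accumulation point, nor does it have to be for the theorem.) With this repair your proof coincides with the paper's.
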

\begin{proof}
First of all, note that if we telescope $B$ and $\B$ simultaneously along a sequence $n_0=0<n_1<n_2<\ldots$ with $n_1=1$, then the Birkhoff spectrum of the telescoped extension triple coincides with that of $(B,\B,\pi)$.
We may hence assume without loss of generality that maximal edges in $\B$ have a unique source.
We may further assume that $\sup_{\mu\in \mc M}\mu(D)>0$ (the other case is trivial).

Second, note that as a consequence of the discussion before the statement, it suffices to show
that $\SD$ contains $(0,\sup_{\mu\in \mc M}\mu(D))$.
To that end, we show that
for each $\nu\in(0,\sup_{\mu\in \mc M}\mu(D))$, there is
$\beta\in X_B$ which satisfies 
\begin{align}\label{eq: z realises all frequencies}
\limsup_{n\to \infty} S_{D}^n(\beta)\geq \nu\qquad \text{ and } \qquad
\liminf_{n\to \infty} S_{D}^n(\beta)\leq \nu.
\end{align}
Note that this proves the statement.
Indeed, given $\eps>0$ and $N\in \N$, pick integers $n_+>n_->N$ such that $1/n_-<\eps$ and $S_{D}^{n_-}(\beta)\leq \nu+\eps$ as well as $S_{D}^{n_+}(\beta)\geq \nu-\eps$.
Then, as $|S_{D}^{n}(\beta)-S_{D}^{n+1}(\beta))|\leq1/n_-<\eps$ for all $n\geq n_-$, we have that
$\{S_{D}^{n_-}(\beta),\ldots ,S_{D}^{n_+}(\beta)\}$ is $\eps$-dense in the interval spanned by its extremal points and has
thus a non-empty intersection with $[\nu-\eps,\nu+\eps]$.
The theorem follows since $\eps>0$ and $N\in \N$ were arbitrary.

Now, given $\nu\in(0,\sup_{\mu\in \mc M}\mu(D))$, pick some null sequence $(\delta_k)_{k\geq 2}$ in $(0,\nu/2)$ and let $(n_k)_{k\in \N}$ be a sequence in $\Z_{\geq 2}$ such that
$n_{k+1}$ exceeds $n_k$ on a scale $\delta_k$ for each $k$.
Let $\Gamma(k)$ be the collection of all finite paths $\gamma=(\gamma^1,\ldots,\gamma^k)$ where
$\gamma^1\in \mc E(0,n_1)$ and $\gamma^i\in \mc E(n_{i-1},n_{i})$ for $i=2,\ldots,k$ are such that
\eqref{eq: finitary version of realising path} is satisfied.
By Lemma~\ref{lem: finitary argument for interval}, $\Gamma(k)\neq \emptyset$ for all $k\in\N$.
As the $n_k$-head of any element of $\Gamma(K)$ with $K>k$ is an element of $\Gamma(k)$ and further, $\Gamma(k)$ is finite for each $k$, it follows that
\[
 \Gamma(\infty)=\{\gamma\in X_{\mc B}\: \text{ for all } k\in \N \text{, we have } (\gamma_0,\gamma_1,\ldots,\gamma_{n_k})\in \Gamma(k)\}
\]
is non-empty.
Clearly, every $\gamma\in \Gamma(\infty)$ satisfies
\[
 \limsup_{n\to \infty} S^n(\gamma)\geq \nu \qquad \text{ and } \qquad
\liminf_{n\to \infty} S^n(\gamma)\leq \nu.
\]
As $\limsup_{n\to\infty} S^n(\gamma)>0$,
$\gamma$ necessarily traverses vertices with $2$ elements.
With Remark~\ref{rem: full preimages in finite-to-1 extensions}, we see that $\gamma$ is the full preimage
of $\beta=\pi(\gamma)\in X_B$.
Hence, Proposition~\ref{prop: full preimage}  gives that $\beta$ satisfies \eqref{eq: z realises all frequencies}.
This finishes the proof.
\end{proof}

\section{An example with non-maximal Birkhoff spectrum}\label{sec: non-maximal entire section}
In this section, we prove our second main result, Theorem~\ref{thm: non-maximal spectra exist}, which establishes that the assumptions of Theorem~\ref{thm: main} are optimal in some sense.
Specifically, we show that every ordered Bratteli diagram $B$
admits---possibly after telescoping---an extension triple $(B,\hat B,\pi)$ where the $\pi$-fibres have at most $3$ elements
and $0$ is an isolated point of $\SD\neq \{0\}$.
In particular, the Birkhoff spectrum of $(B,\hat B,\pi)$ is not maximal.

\subsection{Colouring $B$}\label{sec: setting up the factor for non-maximal S}
We consider a diagram $B$ which satisfies the standard assumptions from Sections~\ref{sec: preliminaries} and \ref{sec: almost 1 to 1 extensions...} (specifically, $B$ is simple and properly ordered) and utilise the notation introduced there.
In particular, we denote the edges ending in some vertex $v_\ell(n)\in V_n$ by $e_{\ell,m}(n)$ (with appropriate $\ell$ and $m$).
Further, we assume throughout that
\begin{enumerate}[(a)]
\item For each $n\geq 1$, the vertex $v_0(n)$ is the unique source of the minimal edges starting in $V_n$; \label{assum: i}
 \item For $n\geq 2$, each vertex in $V_n$ is the range of at least $5$ edges;\label{assum: ii}
 \item For $n\geq 2$, $v_0(n)$ is connected to each $v\in V_{n-1}$ through at least $4$ edges. \label{assum: iii}
\end{enumerate}
Clearly, \eqref{assum: i} is just a matter of telescoping and, if necessary, relabelling the vertices;
similarly, \eqref{assum: ii} and \eqref{assum: iii} can be ensured through telescoping.
Note that \eqref{assum: i}--\eqref{assum: iii} are unchanged under further telescoping.

In the following, given $n\in \N$,
we \emph{colour} $E_n$ by selecting
integers $1<M_{\ell}(n)<r_\ell(n)-2$ for each vertex $v_\ell\in V_{n+1}$, and labelling
edges $e_{\ell,m}$ in $E_n$ as \emph{thick}
if $m\in \{1,\ldots,M_\ell(n)\}$, and \emph{thin}
if $m\in\{M_\ell(n)+1,\ldots,r_\ell(n)-2\}$.
We call $E_{0,n+1}$ \emph{coloured} if $E_m$ is coloured ($m=1,\ldots,n$) and say $B$ is \emph{coloured} if $E_n$ is coloured for each $n\in \N$.
The reason for calling edges thin and thick, respectively, becomes clear in the next section. 
There, we construct an extension triple $(B,\B,\pi)$ where $B$ is coloured and $\pi$-fibres with more than one element correspond to paths which are thick on almost every level.

Assuming that we have coloured $E_{0,n+1}$,
we write $\thin_n$ for the union of all $n$-cylinders $[\alpha_0,\alpha_1,\ldots,\alpha_n]$
where for some $i\in\{1,\ldots,n\}$, $\alpha_i$ is thin or extremal (maximal or minimal).
We write $\tthin_n$ for the union of $\thin_n$ with all cylinders $[\alpha_0,\alpha_1,\ldots,\alpha_n,\alpha_{n+1}]$ where $\alpha_{n+1}$ is extremal.
Further, we say that
a path $\alpha=(\alpha_k)_{k\geq 0}\in X_B$ \emph{crosses a thin interval in $E_n$ before $t>0$} if
$e_{\ell,0}< \alpha_n\leq e_{\ell,M_\ell(n)}$ and $(\phi^{t'}\alpha)_n=e_{\ell,r_\ell(n)-1}$ for some $t'\in\{1,\ldots,t\}$ and appropriate $\ell\in \{0,\ldots,|V_{n+1}|-1\}$.

As a final piece of terminology, we call
$\gamma=(\gamma_0,\ldots,\gamma_n)\in E_{0,n+1}$ \emph{quite small} if
\[
\gamma_i=e_{0,0}(i) \text{ for } i=0,\ldots, n-2 \quad \text{ and }\quad \gamma_{n-1}=e_{\ell_{n-1},0}(n-1), \ \gamma_n=e_{\ell_{n},1}(n)
\]
for some $\ell_{n-1}\in\{0,\ldots,|V_{n}|-1\}$ and
$\ell_{n}\in\{0,\ldots,|V_{n+1}|-1\}$.
Recalling item \eqref{assum: i} from above, we see that this is equivalent to saying that
$\gamma$ is quite small if it is minimal among those paths in $E_{0,n+1}$ whose edge on level $n$ is not minimal.
\begin{lem}\label{lem: measure of collapsing or extremal sets}
Given a simple properly ordered Bratteli diagram $B'$ satisfying
\eqref{assum: i}--\eqref{assum: iii},
we can telescope $B'$ to obtain a diagram $B$ that satisfies \eqref{assum: i}--\eqref{assum: iii} and can be coloured such that
\begin{enumerate}
\item[(d)] For $n\in \N$, each $\gamma$ whose $n$-head is quite small, and $k\geq 1$,
\begin{align}\label{eq: most edges dont contract two}
1/k\cdot|\{i=0,\ldots,k-1\:(\phi_B^i(\gamma))_n \text{ is not thick}\}|<3^{-n};
\end{align}
 \item[(e)] For $n\geq2$, if $\alpha\in X_{B}$ crosses a thin interval in $E_n$ before $t>0$, then
\begin{align}\label{eq: traversing time is long enough}
1/(t+1)\cdot \sum_{i=0}^{t}\mathbf 1_{\tthin_{n-1}}(\phi^i_{B}(\alpha))<1/2.
\end{align}
\label{item two lem measure of collapsing sets}
\end{enumerate}
\end{lem}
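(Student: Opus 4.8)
The plan is to telescope $B'$ aggressively first and only afterwards fix the colouring, choosing the $M_\ell(n)$ so that on every level the thick edges vastly dominate---in terms of incoming path counts---the remaining thin and extremal edges, while still leaving a positive supply of thin edges to act as a buffer below the maximal edge. Concretely, I would telescope $B'$ to a diagram $B$ still satisfying \eqref{assum: i}--\eqref{assum: iii} in which, for every level $n$ and every $v_\ell(n+1)$, the number $r_\ell(n)$ of incoming edges is enormous and the source path counts $|E(v_0,s(e_{\ell,m}(n)))|$ are balanced to within a fixed factor $\kappa$ independent of $n$; this is possible because $B$ is simple, so after enough telescoping every level-$n$ vertex is reached from every level-$(n-1)$ vertex and the ratios of path counts stay bounded. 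Since all minimal edges emanate from $v_0(n)$ by \eqref{assum: i}, the contribution of any single minimal, maximal, or thin edge to $|E(v_0,v_\ell(n+1))|$ is then at most $\kappa/r_\ell(n)$, which further telescoping renders negligible. I would then colour $E_n$ by declaring all but a fixed number $s_\ell(n)\ge 2$ of the non-extremal edges thick, i.e.\ take $M_\ell(n)=r_\ell(n)-2-s_\ell(n)$ with $s_\ell(n)$ large enough (a few multiples of $\kappa$) that the thin edges alone carry more than twice the path count of the single maximal edge, yet $M_\ell(n)$ still so large that the thick path count exceeds $3^n$ times the combined minimal/thin/maximal path count. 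This encodes one summable error budget: the invariant-measure frequency (equivalently, by \eqref{eq: inv measure}, the uniform frequency over $E(v_0,v)$) of paths whose level-$j$ edge is not thick is below $3^{-j}/2$ on every level $j$.

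To prove (d), I would decompose the forward $\phi_B$-orbit of a path $\gamma$ with quite-small $n$-head into \emph{sojourns} of the level-$n$ edge at its successive values $e_{\ell,m}(n)$, each of length exactly $|E(v_0,s(e_{\ell,m}(n)))|$, the time the lower levels take to sweep from minimal to maximal. Grouping consecutive sojourns into excursions through a fixed $v_\ell(n+1)$, the non-thick fraction of a complete excursion is precisely the uniform non-thick fraction at level $n$, hence below $3^{-n}/2$. Here the quite-small hypothesis is exactly what guarantees that we start at $m=1$ (a thick value) rather than at a minimal edge, so the running average begins at $0$ and every later non-thick cluster is preceded by a dominating thick run. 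A short ratio estimate then shows that cutting an excursion at an arbitrary time $k$ raises the cumulative non-thick fraction by at most a further $3^{-n}/2$ --- the worst case being the minimal block opening a new excursion, whose length is dwarfed by the thick run of the preceding excursion --- which yields the strict bound $3^{-n}$ for every $k\ge 1$.

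For (e), I would split the crossing of a thin interval in $E_n$ (from a thick value of $\alpha_n$ up to the maximal edge $e_{\ell,r_\ell(n)-1}$) into the remaining thick sojourns, the thin sojourns, and the maximal sojourn. During the crossing a path lies in $\tthin_{n-1}$ only if either its level-$n$ edge is extremal --- which occurs only in the maximal sojourn, of length at most $\kappa$ times the balanced scale --- or some edge on a level in $\{1,\dots,n-1\}$ is thin or extremal. The latter event has frequency at most $\sum_{j=1}^{n-1}3^{-j}/2<1/4$ by a union bound over the error budget, so throughout the thick-and-thin part of the crossing the path is outside $\tthin_{n-1}$ more than three quarters of the time; since the thin buffer was chosen to exceed twice the maximal block, the thin-and-thick time dominates the maximal block and the overall $\tthin_{n-1}$-frequency stays below $1/2$ for every admissible $t$.

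The main obstacle I anticipate is coordinating the telescoping so that both the strong per-level bound of (d) and the buffering required for (e) hold simultaneously while the rigid constraints \eqref{assum: i}--\eqref{assum: iii} are preserved --- in particular the fact that all minimal edges share the source $v_0(n)$, which by \eqref{assum: iii} makes the minimal block comparatively large and thus the chief non-thick contributor. Tied to this is the delicacy of the ``for all $k$, strict'' formulation of (d), which is what forces us to begin the analysis at a thick value and to carry out the partial-window and excursion-boundary estimate, rather than settling for a clean long-run frequency computation.
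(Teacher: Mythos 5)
Your overall strategy --- telescope so that thick edges dominate in terms of path counts, decompose the orbit into level-$n$ sojourns of length $|E(v_0,s(e_{\ell,m}(n)))|$, and control $\tthin_{n-1}$ by a union bound over the per-level error budgets $3^{-m}/2$ --- is the same as the paper's. But there is a genuine gap at the foundation: you claim that after enough telescoping the path counts $|E(v_0,s(e_{\ell,m}(n)))|$ can be balanced to within a factor $\kappa$ \emph{independent of $n$}. Telescoping along a sequence $(n_k)$ does not alter the quantities $|E(v_0,v)|$ for the retained vertices $v\in V_{n_k}$; it only selects which levels survive. There are simple, properly ordered diagrams (e.g.\ two vertices per level with incidence matrices $\bigl(\begin{smallmatrix} n & 1\\ 1 & 1\end{smallmatrix}\bigr)$) for which $\max_{v\in V_n}|E(v_0,v)|/\min_{v\in V_n}|E(v_0,v)|\to\infty$, so that \emph{no} subsequence of levels is uniformly balanced. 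Since your argument leans on this $\kappa$ at several load-bearing points --- bounding the contribution of a single minimal/maximal/thin edge by $\kappa/r_\ell(n)$, sizing the thin buffer $s_\ell(n)$ as ``a few multiples of $\kappa$'', and bounding the maximal sojourn in (e) by ``$\kappa$ times the balanced scale'' --- the proof as written does not go through.

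The repair is to replace $\kappa$ by a level-dependent quantity, essentially $i_1^{n}=1+\max_{\gamma\in E_{0,n}}T(\gamma)$ as in the paper: each non-thick sojourn at level $n$ lasts at most $i_1^{n}$ steps while each thick sojourn lasts at least one step, giving the sufficient condition $\bigl((r_\ell(n)-M_\ell(n))\cdot i_1^{n}+i_2^{n}\bigr)/M_\ell(n)<3^{-n}/2$ with no balance assumption. The price is that the amount of telescoping needed between levels $n$ and $n+1$ then depends on $i_1^{n}$, i.e.\ on the telescoping already performed below level $n$, so one cannot ``telescope aggressively first and colour afterwards'': the choice of $n_{k+1}$ and the colouring of $E_k$ must be interleaved recursively, which is exactly how the paper's proof is organised. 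A secondary, smaller gap is in (e): your union bound transfers a long-run (complete-sojourn) frequency to the specific crossing window $[0,t]$, which begins in the middle of a sojourn with the lower levels in an arbitrary configuration; one needs the extra shift-by-$j\leq i_2^{m+1}$ argument, together with $t\geq\rho=\min_\ell(r_\ell(n)-M_\ell(n))$ and $i_2^{m+1}/\rho<3^{-m}/2$, to absorb the initial partial sojourn. You anticipate this issue in (d) but do not address it in (e).
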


 The proof of the above statement is slightly tedious but, in principle, straightforward.
 Basically, to be able to colour $B$ appropriately we have to telescope $B'$ such that 
 $r_\ell(n)-M_\ell(n)$ is sufficiently large (so $t$ is large enough for the average in \eqref{eq: traversing time is long enough} to be close to the measure of $\tthin_{n-1}$) while still asymptotically negligible when compared to $M_\ell(n)$ (so that \eqref{eq: most edges dont contract two} holds).

\begin{proof}[Proof of Lemma~\ref{lem: measure of collapsing or extremal sets}]
We will recursively define a sequence $0=n_0 <n_1=1<n_2<\ldots$ along which we telescope $B'=(V',E')$ to obtain a diagram $B=(V,E)$
as in the above statement.
Recall that \eqref{assum: i}--\eqref{assum: iii} are preserved under telescoping and thus carry over from $B'$ to $B$.

Before discussing the recursion, we introduce some notation and make a few small observations.
\begin{list}{$\bullet$}{\setlength{\leftmargin}{20pt}} 
 \item
 For $k\geq 1$, we set $i_1^{k}=1+\max_{\gamma\in E_{0,k}}T(\gamma)$, with $T(\gamma)$ as in 
 Definition~\ref{defn: minimal time to maximality},
 and set $i_2^{k}=2i_1^{k}$. 
  Observe that for each $\beta=(\beta_n)_{n\geq 0} \in X_B$, there is $j$ with $0\leq j\leq i_2^{k}$ such that the $(k-1)$-head of $\phi_B^j(\beta)$ is quite small.
 In fact, if $\beta_k$ is maximal, there is $j'$ with $0\leq j'\leq i_2^{k}$ such that the $k$-head of $\phi_B^{j'}(\beta)$ is quite small.
 For our recursion, it is important to note that the definition of $i_1^{k}$ (and $i_2^{k}$) only depends on $E_{0,k}$.
That is to say, if we define $B$ via telescoping $B'$ along $n_0<n_1<\ldots$,
then $i_1^{k}$ is unaffected by the choice of $n_{k+1},n_{k+2},\ldots$
\item Assuming some colouring of $E_n$, every path $\gamma$ whose $n$-head is quite small first visits thick edges $e_{\ell,1},e_{\ell,2},\ldots,e_{\ell,M_\ell(n)}\in E_n$ before visiting thin (or extremal) edges.
Further, as discussed in the previous item, once $(\phi_B^i(\gamma))_n$ is maximal, there is $0\leq j'\leq i_2^{n}$ such that the $n$-head of $\phi_B^{i+j'}(\gamma)$ is again quite small.
As a consequence,
\eqref{eq: most edges dont contract two} readily follows if for quite small $\gamma\in E_{0,n+1}$,
\begin{align}\label{eq: most edges dont contract three}
 \frac{|\{i=0,\ldots,T(\gamma)\:(\phi_B^i(\gamma))_n \text{ is not thick}\}|+ i_2^{n}}{T(\gamma)}
 <3^{-n}/2,
\end{align}
where we chose a smaller right-hand side to minimise technicalities below.

Moreover, if $r(\gamma)=v_\ell$, $|\{i=0,\ldots,T(\gamma)\:(\phi_B^i(\gamma))_n \text{ is not thick}\}|\leq (r_\ell(n)-M_\ell(n))\cdot i_1^{n}$  and $M_\ell(n)\leq T(\gamma)$.
This yields the following sufficient condition for \eqref{eq: most edges dont contract three} (and hence, \eqref{eq: most edges dont contract two})
\begin{align}\label{eq: most edges dont contract four}
\frac{(r_\ell(n)-M_\ell(n))\cdot i_1^{n}+i_2^{n}}{M_\ell(n)}<3^{-n}/2 \quad (\ell=0,\ldots, |V_{n+1}|-1).
\end{align}
\end{list}
 
Starting the recursion, 
we choose $n_2$ big enough to ensure that we can pick $M_\ell(1)$ (for $\ell=0,\ldots,|V_{n_2}'|-1$)
such that \eqref{eq: most edges dont contract four} is satisfied for $n=1$ whenever $B$ is obtained via telescoping $B'$ along a sequence with initial entries $n_0=0$, $n_1=1$ and $n_2$ as chosen.
This implies (d) while (e) is vacuously true for $n=1$.

Now, suppose we have $k\in \N_{\geq 2}$ and $0=n_0 <n_1<n_2<\ldots<n_k$
such that 
whenever $B$ is obtained by telescoping $B'$ along a sequence with initial entries $n_0,n_1,n_2,\ldots,n_k$, we can colour $E_{0,k}$ such that
\eqref{eq: most edges dont contract four} and \eqref{eq: traversing time is long enough} hold for $n=1,\ldots,k-1$.
We fix one such colouring of $E_{0,k}$ and need to determine a level $n_{k+1}$ such that when we telescope $B'$ along a sequence with initial entries $n_0 <n_1<n_2<\ldots<n_k<n_{k+1}$, we can extend the colouring to $E_{0,k+1}$ in a way that additionally ensures \eqref{eq: most edges dont contract four} and \eqref{eq: traversing time is long enough} for $n=k$.

Consider some---at this point not yet fixed---choice of $n_{k+1}>n_k$ and colouring of $E_k$.
Suppose $\alpha\in X_B$ crosses a thin interval in $E_k$ before some $t>0$.
Note that $\frac1{t+1} \sum_{i=0}^t \mathbf 1_{\tthin_{k-1}}(\phi^i_B(\alpha))$ is bounded from above by
\resizebox{\textwidth}{!}{
\begin{math}
\begin{aligned}
 &
 \frac1{t+1} \big[\sum_{i=0}^t \sum_{m=1}^{k-1} \mathbf 1_{\{\gamma\in X_B\:\gamma_m \text{ is not thick}\}}(\phi^i_B(\alpha))+
 \sum_{i=0}^t\mathbf 1_{\{\gamma\in X_B\:\gamma_k \text{ is extremal}\}}(\phi^i_B(\alpha))\big]
 \\&= \sum_{m=1}^{k-1}  \underbrace{\frac{\sum_{i=0}^t \mathbf 1_{\{\gamma\in X_B\:\gamma_m \text{ is not thick}\}}(\phi^i_B(\alpha))}{t+1}}_{A_m}+
 \underbrace{\frac{\sum_{i=0}^t\mathbf 1_{\{\gamma\in X_B\:\gamma_k \text{ is extremal}\}}(\phi^i_B(\alpha))}{t+1}}_{B_k}.
\end{aligned}
\end{math}
}

Towards estimating the averages $A_m$, note that
\eqref{eq: most edges dont contract four} implies $A_m<3^{-m}/2$
if the $m$-head of $\gamma$ is quite small.
While this is not necessarily the case, there is $j$ with $0\leq j\leq i_2^{m+1},t$ such that the
$m$-head of $\phi^j(\alpha)$ is quite small, and thus
\begin{align}\label{eq: estimate Am}
    \frac{\sum_{i=0}^{t-j} \mathbf 1_{\{\gamma\in X_B\:\gamma_m \text{ is not thick}\}}(\phi^{i+j}_B(\alpha))}{t-j+1}<3^{-m}/2.
\end{align}
Set $\rho=\min_{\ell} r_\ell(k)-M_\ell(k)$, where the minimum is taken over all (indices of) vertices $v_\ell\in V_{k+1}=V'_{n_{k+1}}$.
Since $\alpha$ crosses a thin interval in $E_k$ before $t$, we trivially have $\rho\leq t$.
Hence, $j/t\leq i_2^{m+1}/t\leq i_2^{m+1}/\rho$ so that due to \eqref{eq: estimate Am}, we get $A_m<3^{-m}$ ($m=1,\ldots,k-1$)
whenever $ i_2^{m+1}/\rho <3^{-m}/2$.

Towards estimating $B_k$, 
assuming that $n_{k+1}$ is large enough to allow colouring $E_k$ in a way that ensures \eqref{eq: most edges dont contract four} for $n=k$, we have---similar to the case of $A_m$---that $B_k<3^{-k}/2$ as long as the $k$-head of $\alpha$ is quite small.
Again, this is not necessarily the case, so we argue as follows.
Let $t_0\leq t$ mark the first time that $\phi^{t_0}(\alpha)_k$ is maximal (and hence the first time that $\phi^{t_0}(\alpha)_k$ is extremal).
By our initial observations, there is $j$ with $0\leq j\leq i^{k}_2$ such that the $k$-head of $\phi^{t_0+j}(\alpha)$ is quite small.
Using $\rho\leq t$, we obtain
\begin{align*}
 B_k\leq i^{k}_2/\rho+ \frac{\sum_{i=t_0+j}^t\mathbf 1_{\{\gamma\in X_B\:\gamma_k \text{ is extremal}\}}(\phi^i_B(\alpha))}{t+1}<3^{-k},
\end{align*}
where the last inequality holds whenever $i^{k}_2/\rho<  3^{-k}/2$ and \eqref{eq: most edges dont contract four} is satisfied for $n=k$. 
(Note that $\sum_{i=t_0+j}^t\cdots$ is understood to vanish if $t_0+j>t$.)

Clearly, if we choose $n_{k+1}$ sufficiently large, we can
colour $E_k$ in a way that simultaneously ensures
$i^{m}_2/\rho<  3^{-m}/2$ ($m=1,\ldots,k$) and \eqref{eq: most edges dont contract four} (for $n=k$).
By the above, this not only yields \eqref{eq: most edges dont contract four} (and thus \eqref{eq: most edges dont contract two}) but also
\[
 \frac1{t+1} \sum_{i=0}^t \mathbf 1_{\tthin_{k-1}}(\phi^i_B(\alpha))\leq \sum_{m=1}^{k-1} A_m+B_k<\sum_{m=1}^k 3^{-m}<1/2,
\]
that is, \eqref{eq: traversing time is long enough} for $n=k$.
The statement follows.
\end{proof}

\subsection{Extending $B$}\label{sec: construction of non-maximal extension}
In the following, we consider $B$ a simple, properly ordered, coloured diagram satisfying (a)--(e) from Section~\ref{sec: setting up the factor for non-maximal S}.
We construct a diagram $\hat B$ by copy-pasting $B$ in the following way.
\begin{enumerate}[(i)]
 \item \label{item: 3 copies of vertices} For each $n\geq1$, $\V_n$ contains three copies
 $v_\ell^0(n),v_\ell^1(n),v_\ell^2(n)$ of each vertex $v_\ell(n)\in V_n$.
 Only for $\ell=0$, there is an additional fourth copy $v_0^3(n)$ of $v_0(n)$ (which serves as the unique source of the minimal edges in $\E_n$---see the next item).
 \item For each $n\geq 1$ and each $j$ and $\ell$, $s(\e^j_{\ell,0}(n))=v_0^3(n)$ and $s(\e^j_{\ell,r_\ell-1}(n))=v_{\ell_+}^0(n)$
 where $\ell_+=\ell_+(\ell,n)$ is such that $v_{\ell_+}(n)=s(e_{\ell,r_\ell-1}(n))$ in $B$.\label{item: unique source minimal edges}
 \item \label{item: diagram is simple}
 For each $n\geq 1$ and $m=1,\ldots,r_0(n)-2$, $s(\e^3_{0,m}(n))=v_{\ell}^j(n)$ assuming that $s(\e_{0,m}(n))=v_\ell(n)$ (in $B$), where $j\in\{0,1,2\}$ is such that \[j=|\{k=0,\ldots,m-1\:s(e_{0,k}(n))=v_\ell(n)\}|\bmod 3.\]
 Note that with the previous item and assumption \eqref{assum: iii} from the previous section, this ensures that $\B$ is simple.
 \item \label{item: mainly rank is preserved}
 For each $n\geq 1$ and each $\ell$, we have
  \begin{itemize}
  \item For $m=1,\ldots,M_{\ell}(n)$ and $j=0,1,2$, $s(\e^j_{\ell,m}(n))=v_{\ell'}^j(n)$ assuming that $s(\e_{\ell,m}(n))=v_{\ell'}(n)$ (in $B$); 
  \item For $m=M_{\ell}(n)+1,\ldots,r_\ell(n)-2$,
  $s(e_{\ell,m}^0(n))=s(e_{\ell,m}^1(n))=v_{\ell'}^0(n)$
  and $s(e_{\ell,m}^2(n))=v_{\ell'}^1(n)$
  assuming that $s(\e_{\ell,m}(n))=v_{\ell'}(n)$ (in $B$).
 \end{itemize}
  \setcounter{counter}{\value{enumi}}
\end{enumerate}

Observe that the second bullet in item \eqref{item: mainly rank is preserved} implies that if $\alpha\in X_B$ has more than one thin edge, then
$\alpha\notin D$.
On the other hand, the first bullet in that item ensures that if $\alpha$ has no extremal edge on any level $E_n$ with $n\geq 1$ and no more than one thin edge, then $\alpha\in D$.

For the next statement, recall from Section~\ref{sec: irregular extension} that
we call a Bratteli-Vershik system $(X_B,\phi_B)$ $k$-expansive if for each pair of paths $\gamma\neq \gamma'\in X_B$ the $k$-heads of $\phi_B^n(\gamma)$ and $\phi_B^n(\gamma')$ disagree for some $n$.

\begin{thm}\label{thm: non-maximal spectra exist}
 Given an extension triple $(B,\B,\pi)$ with $B$ and $\B$ as described above, $(X_{\B},\phi_{\B})$ is an almost \aab\ extension of $(X_B,\phi_B)$ where $0$ is isolated in $S_D$ and $\SD\neq \{0\}$.
 In particular, $\SD$ is not an interval.
 
 Further, if $(X_B,\phi_B)$ is $k$-expansive,
 then so is $(X_{\B},\phi_{\B})$.
\end{thm}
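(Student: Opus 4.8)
The plan is to treat the four assertions separately, building on the copy-pasting analysis already developed. That $\pi$ is almost $1$-to-$1$ is automatic, since $\B$ is obtained by copy-pasting $B$ and the collapsing map of any copy-pasting was shown to be an almost $1$-to-$1$ factor map. For the bound of $3$ on the fibres I would argue exactly as in the proof of Theorem~\ref{thm: 2 to 1 extensions exist}: by item~\eqref{item: 3 copies of vertices} every vertex other than the $v_0(n)$ carries three copies, while the fourth copy $v_0^3(n)$ is the source of minimal edges only; hence any path traversing $v_0^3(n)$ on infinitely many levels is the unique preimage of its projection, and Remark~\ref{ref: rank is bounded by number of copies} then yields $|\pi^{-1}\alpha|\le 3$. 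The combinatorial heart is a survival analysis for the three copies through item~\eqref{item: unique source minimal edges} and item~\eqref{item: mainly rank is preserved}: copy $0$ propagates through thick and thin edges (and through maximal edges, where it even branches) but dies at a minimal edge, whereas copies $1,2$ die at maximal edges and copy $2$ dies at a thin edge, with copy $1$ sent to copy $2$ by a thin edge. Tracking which copies at level $1$ admit an infinite lift recovers precisely the two implications recorded after item~\eqref{item: mainly rank is preserved}: absence of extremal edges on levels $\ge 1$ together with at most one thin edge forces $\alpha\in D$, while two thin edges force $\alpha\notin D$.

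For $\SD\neq\{0\}$ I would invoke the general inclusion $\{0,\sup_{\mu\in\mc M}\mu(D)\}\subseteq \SD$ established before Theorem~\ref{thm: main}, reducing the task to producing an invariant measure with $\mu(D)>0$. Corollary~\ref{cor: irregular extension} writes $\mu(D)=\lim_n\sum_{v\in V_n}\tfrac{|E^D(v_0,v)|}{|E(v_0,v)|}\,\mu(X_v)$, so I only need a positive lower bound, uniform in $v$, on the proportion of paths to $v$ that lie in $D$. By the previous paragraph a cylinder meets $D$ once it avoids extremal edges and carries at most one thin edge, and condition~\eqref{eq: most edges dont contract four} forces $r_\ell(n)-M_\ell(n)$ to be negligible against $M_\ell(n)$, so the combined thin-and-extremal proportion at level $n$ is summable over $n$. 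Hence the proportion of all-thick, extremal-free paths is bounded below by a positive constant uniformly in $v$, giving $\mu(D)>0$ and thus $\sup_\mu\mu(D)\in\SD\setminus\{0\}$.

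The crux, and the step I expect to be hardest, is that $0$ is isolated in $\SD$, i.e.\ that $\SD\cap(0,\eps)=\emptyset$ for some $\eps>0$. Here $\phi_B^i\alpha\in D$ amounts, by the first paragraph, to $\phi_B^i\alpha$ being all-thick with no extremal edge on levels $\ge 1$, so $\SD$ records the attainable densities of the all-thick r\'egime along orbits. I would control these densities through the return structure to quite small heads: between consecutive visits of the orbit to a quite small $n$-head the path sweeps out a level-$n$ tower, and condition~(d) of Lemma~\ref{lem: measure of collapsing or extremal sets} (via~\eqref{eq: most edges dont contract two}) guarantees that within such a sweep the level-$n$ edge is thick for a proportion exceeding $1-\sum_{m\le n}3^{-m}>1/2$ of the time. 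The difficulty is that being thick at level $n$ is not the same as lying in $D$, which requires all higher levels to cooperate as well, so the estimate must be run simultaneously across scales. This is exactly where condition~(e) (via~\eqref{eq: traversing time is long enough}) enters: whenever the orbit crosses a thin interval at level $n$, the time spent outside the all-thick region below level $n$ is less than half the crossing time, so the overhead of leaving and re-entering the all-thick r\'egime cannot dilute the density to an arbitrarily small positive value. Combining the lower bound from~(d) on each sweep with the transition control from~(e), one aims at the dichotomy that the orbit either visits $D$ with density $0$ or has every accumulation value of $S_D^n(\alpha)$ bounded below by a fixed $\eps>0$. Making this multi-scale dichotomy precise, and in particular choosing $\eps$ uniformly while reconciling the tower decompositions at different levels, is the main obstacle.

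The remaining assertions are then short. That $\SD$ is not an interval is immediate from $0\in\SD$, $\SD\neq\{0\}$, and the gap $(0,\eps)\cap\SD=\emptyset$. Finally, for preservation of expansivity I would mirror the corresponding argument in Theorem~\ref{thm: 2 to 1 extensions exist}: if $(X_B,\phi_B)$ is $k$-expansive then two preimages with distinct projections are separated at level $k$ by expansivity of the base, so it remains to separate two preimages $\beta\neq\beta'$ with a common projection $\alpha$. Choosing a level below which $\beta$ and $\beta'$ differ and iterating $\alpha$ to a time whose head is extremal-free and all-thick, so that $\alpha\in D$ and the copies at level $1$ are genuinely split, one produces $i$ with $(\phi_{\B}^i\beta)_0\neq(\phi_{\B}^i\beta')_0$, establishing $k$-expansivity of $(X_{\B},\phi_{\B})$.
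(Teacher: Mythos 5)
Your first, second, and last paragraphs are essentially sound. The fibre bound via $v_0^3(n)$ and Remark~\ref{ref: rank is bounded by number of copies} is exactly the paper's argument, and your route to $\SD\neq\{0\}$ through Corollary~\ref{cor: irregular extension} and a uniform lower bound on $|E^D(v_0,v)|/|E(v_0,v)|$ is a valid alternative (the paper instead gets $\SD\neq\{0\}$ for free from the isolation estimate, since any $\alpha$ with a non-singleton fibre satisfies $S_D^M(\alpha)\geq 1/3$ for all large $M$). The expansivity sketch also matches the paper, modulo the observation that two distinct preimages of a common projection differ at all sufficiently high levels, so that iterating $\alpha$ onto an all-thick head propagates the distinct copy indices down to level $0$ via the first bullet of item~\eqref{item: mainly rank is preserved}.

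The genuine gap is the step you yourself flag as ``the main obstacle'': the uniform lower bound showing $\SD\cap(0,\eps)=\emptyset$. You have identified the correct ingredients (conditions (d) and (e) of Lemma~\ref{lem: measure of collapsing or extremal sets}, and the characterisation of $D$ by thickness), but the multi-scale reconciliation you anticipate never needs to be performed, and without seeing how to avoid it the proof is not complete. The paper's resolution is a single-scale reduction: fix $\alpha$ with non-singleton fibre, so $\alpha_n$ is thick and non-extremal for $n\geq n_0$, and for each large $M$ let $N=\Thin(M)$ be the \emph{highest} level at which the orbit segment $\{\phi_B^i\alpha\}_{i=0}^{M}$ crosses a thin interval. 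Then on this segment all edges on levels $\geq N+2$ are frozen (equal to $\alpha_m$, hence thick), the level-$(N+1)$ edge is never extremal, and consequently $\phi_B^i\alpha\in D$ except when $\phi_B^i\alpha$ lies in $\tthin_{N-1}$ or its level-$N$ edge fails to be thick. The first exceptional set has time-proportion $<1/2$ by \eqref{eq: traversing time is long enough} (applicable precisely because a thin interval at level $N$ is crossed before $M$), the second has proportion $<3^{-N}$ by \eqref{eq: most edges dont contract two}, giving $S_D^M(\alpha)\geq 1-1/2-3^{-N}\geq 1/3$ for every large $M$. Since any $\beta$ with $S_D(\beta)\neq\{0\}$ must itself have a non-singleton fibre (fibre cardinality is orbit-invariant and $D$ consists of non-singleton fibres), this yields $\SD\cap(0,1/3)=\emptyset$. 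Your proposed dichotomy is the right target, but absent this reduction the argument does not close.
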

\begin{proof}
We start by discussing that the extension is at most $3$-to-$1$.
To that end, note that due to items \eqref{item: unique source minimal edges}---\eqref{item: mainly rank is preserved}, the vertex $v_0^3(n)$ ($n\in \N$) is the source
of minimal edges only.
Accordingly, if $\beta\in X_{\B}$ traverses $v_0^3(n)$ on infinitely many levels $n$, then $\beta$ is the sole preimage
of $\pi(\beta)$.
With Remark~\ref{ref: rank is bounded by number of copies} and due to item \eqref{item: 3 copies of vertices}, we hence obtain that fibres have no more than $3$ elements, that is, $|\pi^{-1}\alpha|\leq 3$ for all $\alpha\in X_B$.

We next consider the Birkhoff spectrum $\SD$.
To that end, fix some $\alpha=(\alpha_0,\alpha_1,\ldots)$ in $X_B$ such that $\pi^{-1}(\alpha)\ssq X_{\B}$ is not a singleton.
Note that there is
 $n_0\in \N$ such that $\alpha_n$ is neither extremal nor thin whenever $n\geq n_0$.
 For large enough $M\in \N$, set $\Thin(M)$ to be maximal such that $\alpha$ crosses a thin interval in $E_{\Thin(M)}$ before $M$.
 We assume in the following that $M$ is sufficiently big to ensure $\Thin(M)> n_0$.

As $\alpha$ was arbitrary, to prove that $0$ is isolated in $\SD$, it suffices to show that for every $M$ as above, $S_D^M(\alpha)\geq1/3$.
 To that end, let $N=\Thin(M)$ and let $M_0\in\{1,\ldots, M\}$ be minimal such that $\alpha$ crosses a thin interval in $E_N$ before $M_0$.
 Note that by definition of $N$, $\phi^i_B(\alpha)$ is constant on level $N+2$ and beyond for all $i=0,\ldots,M$,
 that is, $(\phi_B^i(\alpha))_m=\alpha_m$ for $m\geq N+2$.
In particular, we have that $(\phi^i_B(\alpha))_m$ is thick for $i=0,\ldots,M$ and $m\geq N+2$.

Moreover, as $\alpha_{N+1}$ is thick and as $\alpha$ does not cross
a thin interval in $E_{N+1}$ before $M$, $(\phi_B^i(\alpha))_{N+1}$ is not extremal for $i=0,\ldots,M$.
Consequently, if $\phi_B^i(\alpha) \notin \thin_{N}$, then $\phi_B^i(\alpha)\in D$.
Also, observe that for $i=0,\ldots,M_0$, $(\phi_B^i(\alpha))_{N+1}=\alpha_{N+1}$ is thick 
so that for $i=0,\ldots,M_0-1$, $\phi^i_B(\alpha)\in D$ already if $\phi_B^i(\alpha) \notin \thin_{N-1}$.
It follows that
if $\tau\geq0$ is minimal such that the $N$-head of
$\phi^\tau_B(\alpha)$ is quite small, then as long as $0\leq i\leq \tau$,
$\phi^i_B(\alpha)\in D$ whenever $\phi_B^i(\alpha) \notin \tthin_{N-1}$.

 Altogether, we obtain
 \begin{align*}
  S_D^M(\alpha)
  &\geq 1-1/M\cdot |\{i=0,\ldots,M-1\: \phi^i(\alpha) \in \tthin_{N-1}\}|\\
  &\phantom{\geq\ }-1/M\cdot |\{i=\tau,\ldots,M-1\: (\phi^i(\alpha))_N \text{ is not thick}\}|
  \\
  &
  \geq1- 1/2- 3^{-N} \geq 1/3,
 \end{align*}
where we used \eqref{eq: traversing time is long enough} and \eqref{eq: most edges dont contract two} in the penultimate step.
This shows the first part of the statement.

Now, suppose $(X_B,\phi_B)$ is $k$-expansive.
As in the proof of Theorem~\ref{thm: 2 to 1 extensions exist}, it suffices to show that
given $\beta\neq\beta'\in X_{\B}$ with $\alpha=\pi(\beta)=\pi(\beta')$, 
there is $i\in\Z$ with $(\phi_{\B}^i(\beta))_n\neq (\phi_{\B}^i(\beta'))_n$ for each $n\geq 0$.
To that end, pick $n_0$ as above (such that $\alpha_n$ is thick for all $n\geq n_0$) and
choose a path $\gamma=(\gamma_k)_{k=0,\ldots,n_0-1}\in E(v_0,s(\alpha_{n_0}))$ such that
$\gamma_k$ is thick ($k=1,\ldots,n_0-1$).
Observe that there is $i$ such that 
the $(n_0-1)$-head of $\phi^i_B(\alpha)$ coincides with $\gamma$ and
$\phi_B^i(\alpha)_n=\alpha_n$ for all $n\geq n_0$.
In particular, $(\phi_B^i(\alpha))_n$ is thick for each $n\geq 1$ so that $\phi_B^i(\alpha)\in D$.
The statement follows.
\end{proof}

\section{Application to symbolic almost automorphic extensions and  Birkhoff spectra of fat Cantor sets}\label{sec: last section}
In the following, $\T$ is an infinite compact metrisable monothetic group (that is, $\T$ has a dense cyclic subgroup) equipped with normalised Haar measure $m$.
For concreteness, we fix some compatible metric $d$ on $\T$.
By $(\T,\g)$, we denote the \emph{rotation} given by $\T\ni\theta\mapsto \theta+\g\in \T$.
Our goal is to apply the results of the previous sections towards computing the Birkhoff spectra $\SC$ of suitable subsets $C\ssq \T$ under the rotation by $\g$, that is, we want to compute
\[
 \SC=\bigcup_{\theta\in \T}\big\{\nu\in[0,1]\:\nu \text{ is an accumulation point of } 1/n\cdot\sum_{i=0}^{n-1}\mathbf 1_{C}(\theta+i\g)\big\}.
\]
The \emph{suitable} sets whose Birkhoff spectra our results apply to, in principle, are the boundaries of certain covers of $\T$, described in Section~\ref{sec: aa subshifts} below.

\subsection{Technical preparation}
We begin with a brief discussion of natural isomorphisms between expansive Bratteli–Vershik systems and corresponding subshifts. 
We then introduce the basics of almost automorphic subshifts along with their associated separating covers.
The boundaries of these covers are the sets whose Birkhoff spectra we are able to analyse.

\subsubsection{Expansive Bratteli-Vershik systems and subshifts}
\label{sec: expansive BV systems and their subshifts}
Recall that $(X,f)$ is an expansive Cantor system if and only if it is isomorphic to a bi-infinite subshift $(\Sigma,\sigma)$ over a finite alphabet $\mc A$ \cite{Hedlund1969}.
If $(X,f)$ is given by a Bratteli-Vershik system $(X_B,\phi)$, a corresponding subshift and isomorphism can be identified as follows.

Assuming that $(X_B,\phi)$ is $0$-expansive (see Section~\ref{sec: irregular extension}), let $\mc A=E_0$ (the level-$0$ edges) and consider
the collection $\Sigma$ of all sequences $x=(x_i)\in {\mc A}^\Z$ 
over $\mc A$
such that there is $\gamma\in X_B$ with
\[x_i=\phi^i(\gamma)_0 \qquad (i\in \Z).\]
Then, $\Sigma$ is straightforwardly seen to be closed (in the product topology on ${\mc A}^\Z$) and shift invariant, that is, $(\Sigma,\sigma)$ is a subshift.
Moreover,
\begin{align*}
 h\: X_B\to \Sigma,\qquad \gamma\mapsto \big(\phi^i(\gamma)_0\big)_{i\in \Z}
\end{align*}
is an isomorphism between $(X_B,\phi)$ and $(\Sigma,\sigma)$, see also \cite{GjerdeJohansen2000,DownarowiczMaass2008}.

\subsubsection{Almost automorphic subshifts}\label{sec: aa subshifts}
In all of the following, $(\T,\g)$ is a \emph{minimal} rotation and hence, uniquely ergodic (with $m$ being the unique invariant measure).
A topological dynamical system $(X,f)$ is an \emph{(irregular/ regular) almost automorphic system}
over $(\T,\g)$
if $(X,f)$ is an (irregular/ regular) almost $1$-to-$1$ extension of $(\T,\g)$.
Clearly, being regularly (or irregularly) almost automorphic over a specific rotation is preserved under isomorphisms.
It is important to note that regularity and irregularity are mutually exclusive in the context of almost automorphy.
Indeed, an almost automorphic system over $(\T,\g)$ is either a regular or an irregular extension of $(\T,\g)$.

A finite collection $W=(W_0,W_1,\ldots,W_{m-1})$ of subsets of $\T$ is
a \emph{topologically regular cover} if
\begin{enumerate}[(a)]
 \item $\T = \bigcup_{i=0,\ldots,m-1} W_i$ and $\operatorname{int} W_i\cap \operatorname{int} W_j=\emptyset$ whenever $i\neq j$;
 \item For all $i=0,\ldots,m-1$, $W_i$ is \emph{topologically regular}, that is, $\overline{\operatorname{int} W_i}=W_i$.
 \end{enumerate}
Given such $W$, we set $\partial W=\bigcup_{i=0,\ldots,m-1}\partial W_i$.
 Following \cite{Paul1976,MarkleyPaul1979}, we call a topologically regular cover $W$ a \emph{separating cover} if
 \begin{enumerate}[(a)]
 \setcounter{enumi}{2}
 \item For all $\theta\neq \theta'\in \T$, there are $i\neq j$ and some $n\in \Z$ with $\theta+ng \in \operatorname{int} W_i$ and $\theta'+ng \in \operatorname{int} W_j$.
\end{enumerate}

Key to our analysis is a close relation between separating covers and almost automorphic subshifts.
Given a separating cover $W=(W_0,W_1,\ldots,W_{m-1})$ of $\T$,
there is a canonically associated almost automorphic subshift $(\S_W,\s)$ over $(\T,\g)$ with an almost $1$-to-$1$ factor map $\pi\:(\Sigma_W,\sigma)\to (\T,\g)$
such that
\begin{align}\label{eq: cover associated to aa subshift}
W_i=\{\theta\in \T\: \text{there is } x\in \pi^{-1}\theta \text{ with } x_0= i\},
 \end{align}
see \cite[Theorem~2.5]{Paul1976}.
As a consequence, we readily obtain
\begin{align}\label{eq: boundary different entries}
\d W=\{\theta\in \T\: \text{there are } x,y\in \pi^{-1}\theta \text{ with } x_0\neq y_0\}
\end{align}
as well as
\begin{align}\label{eq: coding in almost automorphic shifts}
x_n =i\, \text{ if }\, \pi(x)+ng \in \operatorname{int} W_i \qquad \text{and} \qquad
 \pi(x)+ng \in W_i\,  \text{ if }\, x_n =i
\end{align}
for all $x=(x_n)_{n\in \Z}\in \S_W$ and $n\in \Z$.

The converse is also true \cite[Theorem~2.6]{Paul1976}:
Given an almost $1$-to-$1$ factor map $\pi\:(\Sigma,\sigma)\to (\T,\g)$, there is a separating cover $W=(W_0,\ldots,W_{m-1})$ of $\T$ with $\Sigma=\Sigma_W$ and such that \eqref{eq: cover associated to aa subshift}--\eqref{eq: coding in almost automorphic shifts} are satisfied---note that here, we assume without loss of generality that $(\S,\s)$ is a subshift over the alphabet $\{0,\ldots,m-1\}$ for some $m\in \N$.
Observe that \eqref{eq: boundary different entries} implies that $(\Sigma_W,\s)$ is an irregular almost $1$-to-$1$ extension of $(\T,g)$ if and only if $m(\d W)>0$.
\subsection{Birkhoff spectra of boundaries of separating covers}
We are now in a position to translate Theorems~\ref{thm: main} and \ref{thm: non-maximal spectra exist} into analogous statements concerning the asymptotic frequency of visits to the boundaries of separating covers. 
Although the resulting formulations are similar, some caution is necessary. 
In particular, when recasting the problem as a computation of the Birkhoff spectrum of an extension triple, one must account for the fact that the spectrum crucially depends on the particular triple, see the discussion in Section~\ref{sec: collapsing map}. 
As a consequence, and especially in the proof of the second part of the following statement, we have to choose (the first levels of) our Bratteli–Vershik representations carefully.

\begin{thm}\label{thm: translation to almost automorphic shifts}
 Given a minimal rotation $(\T,\g)$, there are separating covers $W,W'$ of $\T$ with $m(\d W),m(\d W')>0$ such that
 $\SW$ is maximal (that is, $\SW=[0,m(\d W)]$) and $\SWP$ is not maximal.
 
 In fact, whenever $(\Sigma_W,\s)$ is an (irregular) almost \aaa\ extension of $(\T,\g)$, then $\SW$ is maximal.
\end{thm}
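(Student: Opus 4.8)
The plan is to transfer both symbolic constructions to $(\T,\g)$ through an intermediate \emph{Cantor} factor, since $(\T,\g)$ itself need not be a Cantor system. I fix once and for all a subshift $(\Sigma_0,\s)$ that is a \emph{regular} almost $1$-to-$1$ extension of $(\T,\g)$ (these exist by \cite[Corollary~3.13]{FuhrmannKwietniak2020}), with factor map $p_0\:\Sigma_0\to\T$. As $(\T,\g)$ is uniquely ergodic, so is $(\Sigma_0,\s)$; I write $\nu_0$ for its unique invariant measure, so $p_{0*}\nu_0=m$, and I telescope a Bratteli--Vershik representation $B_0$ of $(\Sigma_0,\s)$ to be $0$-expansive. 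The basic transfer principle is that, since $p_0$ is an onto factor map, for \emph{any} closed $F\subseteq\T$ and every $\theta=p_0(y)$,
\[
\tfrac1n\sum_{i=0}^{n-1}\mathbf 1_{F}(\theta+i\g)=\tfrac1n\sum_{i=0}^{n-1}\mathbf 1_{p_0^{-1}(F)}(\s^i y).
\]
Passing to accumulation sets and taking the union over all $y$ (using that $p_0$ is onto) shows that the Birkhoff spectrum of $F$ in $(\T,\g)$ coincides with that of the closed set $p_0^{-1}(F)\subseteq\Sigma_0$ in $(\Sigma_0,\s)$. Hence, to compute $\SW$ it suffices to realise $p_0^{-1}(\d W)$ as the discontinuity set $D$ of a suitable extension triple over $B_0$ and invoke the symbolic results.

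For the two covers I would argue by a \emph{direct} construction over $B_0$. For $W$, apply Theorem~\ref{thm: 2 to 1 extensions exist} to $B_0$ (after telescoping, and with the first levels adapted as in Remark~\ref{rem: adapted to a partition}) to obtain an expansive extension triple $(B_0,\hat B,\pi)$ with $\pi$ at most $2$-to-$1$, irregular, and $\nu_0(D)>0$; since $\Sigma_0$ carries only the measure $\nu_0$, Theorem~\ref{thm: main} gives $\SD=[0,\nu_0(D)]$. Engineering the adaptation so that $p_0^{-1}(\d W)=D$ and using the transfer principle yields $\SW=[0,\nu_0(D)]=[0,m(\d W)]$, which is maximal, with $m(\d W)=\nu_0(D)>0$. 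For $W'$ I carry out instead the coloured $3$-to-$1$ construction of Section~\ref{sec: construction of non-maximal extension} over $B_0$: after telescoping and colouring $B_0$ to satisfy (a)--(e) (again with first levels adapted), the associated $\hat B$ is expansive and, by Theorem~\ref{thm: non-maximal spectra exist}, produces a discontinuity set $D$ with $\{0\}\subsetneq\SD$, $0$ isolated in $\SD$, and $\SD\neq[0,\sup_\mu\mu(D)]$. Arranging once more $p_0^{-1}(\d W')=D$, the transfer principle delivers $\{0\}\subsetneq\SWP\neq[0,m(\d W')]$ with $m(\d W')=\nu_0(D)>0$.

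The final ``in fact'' clause is the one place where a cover is \emph{given} rather than built over $B_0$, and here I would use a fibred product. Given an irregular almost \aaa\ extension $(\Sigma_W,\s)$ of $(\T,\g)$ with factor map $\pi_W$, form $\Sigma'=\{(x,y)\in\Sigma_W\times\Sigma_0\:\pi_W(x)=p_0(y)\}$ and pass to a minimal subsystem; being a closed shift-invariant subset of a product of subshifts, it is itself a subshift. The second projection $q_2\:\Sigma'\to\Sigma_0$ is an almost $1$-to-$1$ factor map whose fibre over $y$ is $\pi_W^{-1}(p_0(y))\times\{y\}$, so $q_2$ is automatically \emph{at most $2$-to-$1$}, and its discontinuity set is exactly $p_0^{-1}(\d W)$ (two preimages share the same $y$, so their $0$-th symbols differ precisely when the corresponding points of $\Sigma_W$ disagree at $0$). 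Representing $(\Sigma_0,\Sigma',q_2)$ as an extension triple and applying Theorem~\ref{thm: main} gives $S_{p_0^{-1}(\d W)}=[0,\nu_0(p_0^{-1}(\d W))]=[0,m(\d W)]$, and the transfer principle then forces $\SW=[0,m(\d W)]$.

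The main obstacle throughout is the \emph{exact} set identity $p_0^{-1}(\d W)=D$: since the Birkhoff spectrum depends on the particular triple, I must ensure that the symbolic splitting recorded by $D$ coincides with the splitting over $\T$ recorded by $\d W$, even though $p_0$ has its own (null, meagre) non-singleton fibres. The general clause sidesteps this automatically, because $q_2$ inherits the fibre bound of $\pi_W$ and its discontinuity set is $p_0^{-1}(\d W)$ by construction. For the explicitly constructed covers, however, one must guarantee that the doubling introduced by $\hat B$ never occurs over a $p_0$-doubled point, so that no spurious boundary is created; this is arranged by choosing the lowest levels of $B_0$ adapted to the relevant covers (Remark~\ref{rem: adapted to a partition}) and by telescoping to $0$-expansivity, which also ensures that all extensions are genuine subshifts and hence amenable to the separating-cover correspondence of Section~\ref{sec: aa subshifts}.
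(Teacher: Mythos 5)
Your overall skeleton (fix a regular almost $1$-to-$1$ extension $p_0\:\Sigma_0\to\T$, transfer Birkhoff averages along $p_0$, and realise the boundary of the cover as the discontinuity set $D$ of an extension triple over a representation of $\Sigma_0$) is the same as the paper's, but the step you yourself flag as ``the main obstacle'' is a genuine gap, and your proposed remedy does not close it. The exact identity $p_0^{-1}(\d W)=h(D)$ is in fact \emph{impossible} to arrange when $\T$ is connected (e.g.\ the circle, the headline application). By \eqref{eq: boundary different entries}, the boundary of the cover associated to the composite map $\hat\pi=p_0\circ q'$ records, at some shift, \emph{every} irregular $\hat\pi$-fibre---and these include the irregular fibres of $p_0$ itself, which must exist because $\Sigma_0$ is totally disconnected while $\T$ is connected. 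Concretely, if $\theta$ has $p_0$-fibre containing $y\neq y'$ with $y_0\neq y'_0$, then any $q'$-preimages $x,x'$ also satisfy $x_0\neq x'_0$ (in the coding of Section~\ref{sec: expansive BV systems and their subshifts}, copies of distinct level-$0$ edges are distinct symbols), so $\theta\in\d W$ even though neither $y$ nor $y'$ need lie in $h(D)$. Your suggestion to keep the $\hat B$-doubling away from $p_0$-doubled points makes the two sets \emph{more} different, not equal: exact equality would instead force the $q'$-doubling to sit over every $p_0$-doubled point. The paper never attempts exact equality; it proves the sandwich $\mathbf 1_{h(D)}(\s^i y)\leq \mathbf 1_{\d W}(\theta+i\g)\leq \mathbf 1_{h(D)\cup p_0^{-1}(\d W_{\mathrm{reg}})}(\s^i y)$, where $W_{\mathrm{reg}}$ is the separating cover associated to the \emph{regular} map $p_0$, so that the discrepancy set $p_0^{-1}(\d W_{\mathrm{reg}})$ is closed and null, and then invokes Proposition~\ref{prop: limsup is assumed as limit} to conclude that visits to it have vanishing frequency along \emph{every} orbit; hence $\SW=\SD$ asymptotically, without any set identity. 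This asymptotic argument is the key idea missing from your write-up; once it is inserted, your transfer principle and your use of Theorems~\ref{thm: 2 to 1 extensions exist}, \ref{thm: main} and \ref{thm: non-maximal spectra exist} go through.

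The same issue resurfaces, more subtly, in your fibred-product treatment of the ``in fact'' clause. After passing to a minimal subsystem $\Sigma''$ of $\{(x,y)\:\pi_W(x)=p_0(y)\}$, the $q_2$-fibre over $y$ is only a \emph{subset} of $\pi_W^{-1}(p_0(y))\times\{y\}$: when $p_0(y)\in\d W$ but $y$ lies in an irregular $p_0$-fibre, the two points of $\Sigma_W$ witnessing $p_0(y)\in\d W$ need not both be paired with this particular $y$ inside $\Sigma''$, so the discontinuity set of $q_2$ can be a proper subset of $p_0^{-1}(\d W)$. (Over regular $p_0$-fibres the identity does hold, by minimality of $\Sigma_W$ and $\Sigma_0$.) The resulting discrepancy is $\nu_0$-null but not closed, so Proposition~\ref{prop: limsup is assumed as limit} does not dispose of it, and your claim that the identity holds ``by construction'' is unjustified. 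The paper avoids this by never forming a fibred product: it refines the given cover $W$ by a small regular cover $W'$ with $W_1'\subseteq\operatorname{int}W_0$, shows directly (via the auxiliary map $q''$) that the induced symbolic factor map $q'\:(\S_{\hat W},\s)\to(\S_{W'},\s)$ is almost \aaa, chooses the Bratteli--Vershik representations adapted to the cylinder partitions as in Remark~\ref{rem: adapted to a partition}, and runs the same sandwich argument against the closed null set $\pi_{W'}^{-1}(\d W')$. Your fibred-product idea is attractive and could likely be repaired---for instance by choosing $p_0$ so that its irregular points avoid the irregular $\pi_W$-points, or by producing a closed null majorant for the discrepancy---but as written the proof is incomplete at exactly the two places where the paper's main technical device is required.
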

It is a classical fact that over any minimal rotation $(\T,\g)$ there is an almost automorphic subshift $(\Sigma,\sigma)$ \cite[Theorem~3.1]{Paul1976}.
Below, we will need a refinement of this statement (see \cite[Corollary~3.13]{FuhrmannKwietniak2020} and its proof):
To each element $\theta_0\in \T$ and $\eps>0$,
there is a regular almost automorphic subshift over the alphabet $\{0,1\}$ where the associated cover $W=(W_0,W_1)$ is such that $W_1$
is entirely contained in the $\eps$-ball $B_{\eps}(\theta_0)$ (with respect to the metric $d$ on $\T$) around $\theta_0$.
\begin{proof}[Proof of Theorem~\ref{thm: translation to almost automorphic shifts}]
  Pick some regular almost automorphic subshift $(\S,\s)$
  over $(\T,\g)$ with Bratteli-Vershik representation $(X_B,\phi_B)$.
  Towards a cover $\hat W$ with $\SWH$ maximal,
 we use Theorem~\ref{thm: 2 to 1 extensions exist} to obtain (possibly after telescoping) an extension triple $(B,\B,q)$, where
 $(X_{\B},\phi_{\B})$ is isomorphic to a
 subshift $(\hat \Sigma,\sigma)$ and
 $q$ is an irregular almost \aaa\ factor map.
  Let $D$ be as in \eqref{eq: defn discontinuities} (for $(B,\B,\q)$).

Without loss of generality, we may assume that $(X_{B},\phi_{B})$ and
$(X_{\B},\phi_{\B})$ are $0$-expansive and that
we are given isomorphisms $h\:(X_B,\phi_B)\to(\Sigma,\sigma)$ and $\hat h\: (X_{\B},\phi_{\B})\to (\hat \Sigma,\sigma)$ as in Section~\ref{sec: expansive BV systems and their subshifts}.
In particular, this implies $\mu(D)>0$ for each invariant measure $\mu$ of $(X_B,\phi_B)$ (see Section~\ref{sec: collapsing map}) so that $S_D=[0,\sup_\mu \mu(D)]$ is a non-degenerate interval, see Theorem~\ref{thm: main}.
  Set $q'=h\circ q\circ {\hat h}^{-1}$ (so, $q'\: (\hat\S,\s)\to(\S,\s)$) and
let $\pi\: (\Sigma,\sigma)\to(\T,\g)$ be a (necessarily regular) almost $1$-to-$1$ factor map.

Let $\hat\pi\:(\hat\Sigma,\sigma)\to(\T,\g)$ be given by $\hat \pi=\pi\circ q'$.
Recall that the collection of regular $\pi$-fibres is residual in $X_B$, and so is the projection (under $q'$) of the regular $q'$-fibres.
As a consequence, $\hat\pi$ is an almost $1$-to-$1$ factor map.
Denote by $W$ and $\hat W$ the 
separating covers of $\T$ corresponding to $\pi\:(\Sigma,\s)\to (\T,g)$ and $\hat\pi\:(\hat \Sigma,\s)\to (\T,g)$, respectively, as described in \eqref{eq: cover associated to aa subshift}.

Note that $\mathbf 1_{h(D)}(\s^i x)\leq \mathbf 1_{\d \hat W}(\theta+i\g)\leq \mathbf 1_{h(D)\cup \pi^{-1}(\d W)}(\s^i x)$ for each $\theta\in \T$, $x\in \pi^{-1}(\theta)$, and $i\in \Z$.
This is a consequence of \eqref{eq: boundary different entries}, the definition of $D$, and the definition of $\hat h$.
As $m(\d W)=0$ (since $(\S,\s)$ is a regular extension), we have
$ 1/n\cdot \sum_{i=0}^{n-1}\mathbf 1_{\pi^{-1}(\d W)}(\s^i x)\stackrel{n\to \infty}{\longrightarrow} 0$---see Proposition~\ref{prop: limsup is assumed as limit} and note that every invariant measure of an almost automorphic extension of $(\T,g)$ necessarily projects to $m$.
Accordingly, as $n\to\infty$, we asymptotically have
\begin{align}\label{eq: computing asymptotic frequencies}
\begin{split}
 1/n\cdot \sum_{i=0}^{n-1}\mathbf 1_{\d \hat W}(\theta+i\g)\sim
 1/n\cdot \sum_{i=0}^{n-1}\mathbf 1_{h(D)}(\s^i x)
 =1/n\cdot \sum_{i=0}^{n-1}\mathbf 1_{D}(\phi_B^i\gamma),
 \end{split}
\end{align}
where $\gamma=h^{-1}(x)$.
It follows that $\SWH=S_D$ is maximal.
Using extensions as in Theorem~\ref{thm: non-maximal spectra exist} in place of Theorem~\ref{thm: 2 to 1 extensions exist}, the existence of non-maximal spectrum is proven similarly.

For the ``in fact''-part, consider $(\S_W,\s)$ as in the statement, let $W=(W_0,\ldots,W_{m-1})$ be a corresponding separating cover, and pick $\theta_0\in \T$ and $\eps>0$ such that
$B_\eps(\theta_0)\ssq \operatorname{int} W_0$.
Let $(\S_{W'},\s)$ be a regularly almost automorphic subshift over $(\T,g)$ with a separating cover
$W'=(W_0',W'_1)$ such that $W_1'\ssq B_\eps(\theta_0)$ (see the paragraph preceding the present proof).
Then $\hat W=(\hat W_0,\ldots,\hat W_m)=(W_0\setminus \operatorname{int}W_1',W_1,\ldots,W_{m-1},W_1')$ is readily seen to be a separating cover; we refer by $(\S_{\hat W},\s)$ to the corresponding almost automorphic subshift over $(\T,g)$.
Let $\pi_{\hat W}\: (\Sigma_{\hat W},\s)\to (\T,\g)$ and $\pi_{W'}\:(\S_{W'},\s)\to(\T,\g)$ be the canonical almost $1$-to-$1$ factor maps.

Note that we have an almost $1$-to-$1$ factor map $q'\: (\S_{\hat W},\s)\to( \S_{W'},\s)$ given by
\[
q'((x_n)_{n\in\Z})_i=
 \begin{cases}
  1 &\text{if } x_i=m,\\
  0 &\text{otherwise}
 \end{cases}\qquad(i\in \Z).
\]
Indeed, $q'$ is obviously continuous, commutes with the shift, and thus maps orbit closures to orbit closures.
Further, if $x$ is a regular $\pi_{\hat W}$-fibre, then $\theta=\pi_{\hat W}(x)$ satisfies $\theta+\Z g\cap \d W'\ssq \theta+\Z g\cap \d \hat W=\emptyset$.
Hence, using \eqref{eq: coding in almost automorphic shifts}, we see that $q'(x)\in \S_{W'}$ and $\pi_{W'}\circ q'(x)=\pi_{\hat W}(x)$.
Both facts immediately extend to the entire orbit of $x$
and hence, to all of $\S_{\hat W}$ by  minimality and continuity.
Altogether, we conclude that $q'$ is a well-defined (that is, $q'(\S_{\hat W})=\S_{W'}$) almost $1$-to-$1$ factor map with $\pi_{W'}\circ q'=\pi_{\hat W}$.

In fact, $q'$ is almost \aaa.
To see this, note that similar to the definition of $q'$,
\begin{align}
 \label{eq: defn q pime prime}
q''((x_n)_{n\in\Z})_i=
 \begin{cases}
  0 &\text{if } x_i=m,\\
  x_i &\text{otherwise}
 \end{cases}\qquad(i\in \Z)
\end{align}
defines a factor map $q''\: (\S_{\hat W},\s)\to (\S_{W},\s)$ with
 $\pi_{W}\circ q''=\pi_{\hat W}$.
In particular, 
\begin{align}\label{eq: different paths of factoring}
\pi_{W}\circ q''=\pi_{W'}\circ q'.
\end{align}
Now, assuming for a contradiction that there are distinct $x,y,z\in \S_{\hat W}$ with 
$q'(x)=q'(y)=q'(z)$, there must be $n_1,n_2,n_3\in \Z$ with
$x_{n_i},y_{n_i},z_{n_i}\neq m$ (for $i=1,2,3$) and $x_{n_1}\neq y_{n_1}$, 
$x_{n_2}\neq z_{n_2}$, and $y_{n_3}\neq z_{n_3}$.
With \eqref{eq: defn q pime prime}, we see that $q''(x)$, $q''(y)$ and $q''(z)$ would then be three distinct points in $\S_{W}$ which, due to \eqref{eq: different paths of factoring}, are identified under $\pi_W$, contradicting the assumption of $\pi_W$ being almost \aaa.

Finally, let $P=\{[i]\ssq \Sigma_{\hat W}\:i=0,\ldots,m\}$ and $Q=\{[i]\ssq \Sigma_{W'}\:i=0,1\}$ be
the partitions of $\Sigma_{\hat W}$ and $\Sigma_{W'}$, respectively, by cylinder sets of length $1$.
Clearly, ${q'}^{-1}(Q)$ is coarser than $P$.
As discussed in Remark~\ref{rem: adapted to a partition}, there is hence an extension triple $(B,\hat B,q)$ where $(X_B,h)$ and $(X_{\hat B},\hat h)$ are Bratteli-Vershik representations of
$(\S_{W'},\s)$ and $(\S_{\hat W},\s)$, respectively,
such that $(X_{\hat B},\hat h)$ is adapted to $P$ and $(X_{B},h)$ is 
adapted to $Q$.
In particular, $(X_{\hat B},\phi_{\B})$ and $(X_{B},\phi_{B})$ are $0$-expansive, and
it is easy to see that $h$ and $\hat h$ coincide, up to relabelling, with the respective isomorphisms from Section~\ref{sec: expansive BV systems and their subshifts}.
 As further, $q'=h\circ q\circ {\hat h}^{-1}$,
 we are in a similar situation as before and the statement follows as in the first part (with $\Sigma_W'$ in place of $\S$ and $\S_{\hat W}$ in place of $\hat\S$).
\end{proof}
\begin{proof}[Proof of Corollary~\ref{cor: birkhoff spectra of irrational rotations introduction}]
 Similar to the first part of the proof of Theorem~\ref{thm: translation to almost automorphic shifts}, we can arrange for a situation where we are given an extension triple $(B,\B,q)$---obtained via Theorem~\ref{thm: 2 to 1 extensions exist} and Theorem~\ref{thm: non-maximal spectra exist} for non-degenerate maximal and non-maximal spectra, respectively---such that $(B,h)$ and $(\B,\hat h)$ are $0$-expansive Bratteli-Vershik representations of almost automorphic subshifts $(\S,\s)$ and $(\hat\S,\s)$
  over $(\T^1,\g)$ with almost $1$-to-$1$ factor maps $\hat \pi\:(\hat \S,\s)\to(\T^1,\g)$, $\pi\:(\S,\s)\to(\T^1,\g)$ such that $\hat \pi=\pi\circ q'$, where $q'=h\circ q \circ {\hat h}^{-1}$ and $h$ and $\hat h$ are as in Section~\ref{sec: expansive BV systems and their subshifts}.
 
 For simplicity, we may choose $(\Sigma,\s)$ to be a Sturmian subshift with irregular $\pi$-fibres---each consisting of exactly two elements---over the orbit $\Z\w\in \T^1$ of $0$.
 Moreover, we may arrange for $(B,\B,q)$ to be such that $h^{-1}(\pi^{-1}(0))$ contains the minimal path in $B$ (and hence, the projection of a regular $q$-fibre).
 
 As in the proof of Theorem~\ref{thm: translation to almost automorphic shifts}, we obtain $\SWH=S_D$ with $D$ as in \eqref{eq: defn discontinuities}.
 Clearly, $\mathbf 1_{D}(\phi_B^i\gamma)=
 \mathbf 1_{h(D)}(\s^ix)$ for each $\gamma\in X_B$ and $i\in \Z$, where $x=h(\gamma)$.
 Moreover, if $x$ is a regular $\pi$-fibre, we also have
 $\mathbf 1_{h(D)}(\s^ix)=
 \mathbf 1_{\pi(h(D))}(\pi(x)+i\w)$.
 If, however, $x$ lies in an irregular fibre $\{x,y\}$, we may assume without loss of generality that
 $x$ corresponds to a regular $q$-fibre (since $h^{-1}(\pi^{-1}(0))$ contains the minimal path) and hence, never visits $h(D)$ so that $\mathbf 1_{\pi(h(D))}(\pi(x)+i\w)= \mathbf 1_{\pi(h(D))}(\pi(y)+i\w)=\mathbf 1_{h(D)}(\s^iy)$.
 Altogether, this shows $\SWH=\SWPI$.

 The statement hence follows if we can show that $\pi(h(D))$ is a Cantor set---note that fatness, that is, $m(\pi(h(D)))>0$ follows from the non-singleton Birkhoff spectra and Proposition~\ref{prop: limsup is assumed as limit}.

As $D$ is non-empty and compact,
 so is $\pi(h(D))$.
Clearly, on the circle, nowhere dense implies totally disconnected.
Hence, since $\d \hat W\supseteq \pi(h(D))$, $\pi(h(D))$ is totally disconnected.
It remains to show that
 $\pi(h(D))$ has no isolated points.
 
 To that end, notice that the constructions in Theorem~\ref{thm: 2 to 1 extensions exist} and Theorem~\ref{thm: non-maximal spectra exist} are such that $D$ (and hence, $h(D)$) has no isolated points.
 Now, if, for a contradiction, $\pi(h(D))$ had isolated points, then continuity of $\pi$ would imply that there is an open set $U\ssq \S_W$ with $\pi(U\cap h(D))$ a singleton.
 Consequently, $\pi$ would identify infinitely many points contradicting our assumptions on $(\S,\s)$.
\end{proof}
\begin{rem}
 The Cantor sets in the above proof are given rather implicitly.
 We refer the reader to the appendix of \cite{FuhrmannGlasnerJagerOertel2021} for an explicit construction of so-called \emph{perfectly self-similar} fat Cantor sets $C$ in $\T^1$.
 Such Cantor sets can constitute the boundary $\d W$ of a cover whose associated almost automorphic subshift $(\S_W,\s)$ is an irregular almost \aaa\ extension of $(\T^1,\w)$. 
 With the second part of Theorem~\ref{thm: translation to almost automorphic shifts}, we see that $S_C$ is maximal for such $C$.
\end{rem}

\bibliography{dynamics}
\bibliographystyle{abbrv}
\end{document}